\crefname{hypothesis}{Hypothesis}{Hypotheses}
\title{Multiscale Elliptic PDEs Upscaling and Function Approximation via Subsampled Data\thanks{Submitted to the editors DATE: October 2020.
\funding{This research is in part supported by NSF Grants DMS-1912654 and DMS-1907977. Y. Chen is partly supported by the Caltech Kortchak Scholar Program.}}}
\author{Yifan Chen\thanks{Applied and Computational Mathematics, Caltech
  (\email{yifanc@caltech.edu}, \email{hou@cms.caltech.edu}).}
  \and Thomas Y. Hou\footnotemark[2]}
\newcommand{\cN}{\mathcal{N}}
\newcommand{\cL}{\mathcal{L}}
\newcommand{\bR}{\mathbb{R}}
\newcommand{\bN}{\mathbb{N}}
\newcommand{\rd}{\mathrm{d}}
\newcommand{\rN}{\mathrm{N}}
\newcommand{\sfd}{\mathsf{d}}
\newcommand{\sfP}{\mathsf{P}}
\begin{document}

\maketitle

\begin{abstract}
There is an intimate connection between numerical upscaling of multiscale PDEs and scattered data approximation of heterogeneous functions: the coarse variables selected for deriving an upscaled equation (in the former) correspond to the sampled information used for approximation (in the latter). As such, both problems can be thought of as recovering a target function based on some coarse data that are either artificially chosen by an upscaling algorithm, or determined by some physical measurement process. The purpose of this paper is then to study that, under such a setup and for a specific elliptic problem, how the lengthscale of the coarse data, which we refer to as the subsampled lengthscale, influences the accuracy of recovery, given limited computational budgets. Our analysis and experiments identify that, reducing the subsampling lengthscale may improve the accuracy, implying a guiding criterion for coarse-graining or data acquisition in this computationally constrained scenario, especially leading to direct insights for the implementation of the Gamblets method in the numerical homogenization literature. Moreover, reducing the lengthscale to zero may lead to a blow-up of approximation error if the target function does not have enough regularity, suggesting the need for a stronger prior assumption on the target function to be approximated. We introduce a singular weight function to deal with it, both theoretically and numerically. This work sheds light on the interplay of the lengthscale of coarse data, the computational costs, the regularity of the target function, and the accuracy of approximations and numerical simulations.
\end{abstract}

\begin{keywords}
  Multiscale PDEs, Numerical Upscaling, Function Approximation, Subsampled Data, Exponential Decay, Localization.
\end{keywords}

\begin{AMS}
  65D07, 65N15, 65N30, 35A35, 35J25, 65D05.
\end{AMS}
\section{Introduction}
\subsection{Background and Context}
\label{subsec: Background and Context}
In this paper, we are interested in studying a common approach for solving the following two categories of problems.
\subsubsection{Problem 1: Numerical Upscaling} The aim of this problem is to identify the coarse scale solution of a multiscale PDE via solving an upscaled equation for coarse variables. As a prototypical example, in $\Omega=[0,1]^d$, consider the elliptic equation for $u \in H_0^1(\Omega), f\in L^2(\Omega)$ and $\cL=-\nabla \cdot (a\nabla \cdot)$:
\begin{equation}
    \left\{
    \begin{aligned}
    \label{eqn: elliptic rough}
    \cL u&=f, \quad \text{in} \  \Omega\\
    u&=0, \quad \text{on} \  \partial\Omega\, ,
    \end{aligned}
    \right.
    \end{equation}
where the rough coefficient $a(x)$ satisfies $0<a_{\min}\leq a(x)\leq a_{\max}<\infty$ for $x \in \Omega$. Suppose we select the upscaled data of the solution: $[u,\phi_i], i\in I$ where  $\phi_i$ is some \textit{measurement function} that is often localized in space, $I$ is an index set and $[\cdot,\cdot]$ denotes the standard $L^2$ inner product. Then, the task is to derive an effective model for these upscaled variables and use them to approximate the solution of the PDE.

\subsubsection{Problem 2: Scattered Data Approximation} 
\label{subsec: Problem 2: Scattered Data Approximation}
This problem aims to recover a function $u$ (assume it has an underlying PDE model as \eqref{eqn: elliptic rough}) based on sampled data $[u,\phi_i], i\in I$. Here we intentionally use the same notation for the sampled data as that of the upscaled data in Problem 1 to make an explicit connection. We will also often call $[u,\phi_i], i\in I$ the coarse data in both problems.

\subsubsection{A Common Approach}
\label{subsec: a common approach}
Problem 1 is a standard task in multiscale PDEs computations, while Problem 2 has more of its backgrounds from data scientific investigations. Despite their distinguished origins, there is an approach that solves and connects the two -- studying of this method is the focus of the present paper.

To motivate the method, we start from Problem 1: a natural and ideal approach for getting the coarse data is to multiply the equation with the set of \textit{basis functions}: \[\operatorname{span}~\{\psi_i\}_{i\in I}=\operatorname{span}~\{ \cL^{-1}\phi_i\}_{i \in I}\, ,\] so that $[\psi_i, f], i \in I$, after an integration by part, matches the target $[u,\phi_i], i \in I$. 

Phrased in the language of Galerkin's method, $\{\psi_i\}_{i \in I}$ will constitute the test space; furthermore, one needs to select a trial space $V$ (with the same dimension) in order to get the ultimate numerical approximation of $u$. As such, this viewpoint has interpreted Problem 1 as a special case of Problem 2, of recovering $u$, from $[u,\phi_i], i \in I$, via choosing a space $V$. Often and conveniently, the trial space $V=\operatorname{span}~\{\psi_i\}_{i\in I}$ is chosen to be the same as the test space. Under such a choice and after selecting a suitable representative basis $\{\psi_i\}_{i\in I}$ of the linear space $V$ so that $[\psi_i,\phi_j]=\delta_{ij}$, we can write the final solution in a concise form:
\begin{equation}
\label{eqn: ideal sol}
    u^{\text{ideal}}:=\sum_{i\in I}[u,\phi_i]\psi_i\, .
\end{equation}
It is the ideal solution (here, ``ideal'' means that we have not accounted for the computational cost yet) in this setting, both to numerical upscaling and scattered data approximation. In practice, the basis function $\psi_i$ can have global support, and we need a localization step for efficient computation.

As a special case in numerical upscaling, if we choose $\phi_i$ to be piecewise linear tent functions, then we get the ideal LOD method \cite{malqvist_localization_2014}; if $\phi_i$ is set to be piecewise constant functions, then we obtain the Gamblet method in \cite{owhadi_multigrid_2017}. In their contexts, localization of $\{\psi_i\}_{i\in I}$ is achieved via an exponential decay property, and a provable accuracy guarantee has been established by controlling the coarse-graining error of using $u^{\text{ideal}}$ to approximate $u$ and the localization error of computing $\{\psi_i\}_{i\in I}$, respectively. 
\subsubsection{Our Goals}
The purposes of this paper are two folds. 
\begin{itemize}
    \item On the numerical upscaling side, we contribute a further discussion to this family of upscaling methods, concentrating on the fundamental role of a \textit{subsampled lengthscale} (defined in the next subsection) in choosing $\{\phi_i\}_{i\in I}$, with its highly non-trivial consequence on the localization of $\{\psi_i\}_{i \in I}$ and the solution accuracy of $u$. We will get a novel trade-off between approximation and localization regarding the subsampled scale.
    \item On the function approximation side, the above recovery method takes advantage of the underlying physical model \eqref{eqn: elliptic rough}, combining the merits of data and physics. In addition to contributing a detailed analysis of accuracy and comparisons to numerical upscaling, we will pay close attention to the regime where the subsampled lengthscale is small and approaches zero, in which we provide some numerical evidence that exemplifies, and extends our earlier work on function approximation via subsampled data \cite{chen2019function}.
\end{itemize} 

Our detailed contributions are outlined in Subsection \ref{subsec: our controbution}. 
\subsection{Subsampled Lengthscales}
\label{subsec: Subsampled Lengthscales}
We begin by introducing the concept of \textit{subsampled data}. For a demonstration of ideas, we work on the domain $\Omega=[0,1]^d$, and it is decomposed uniformly into cubes with side length $H$; this becomes our coarse grid. Let $I$ be the index set of these cubes such that its cardinality $|I|=1/H^d$. The measurement function $\phi_i^{h,H}$ (we use superscripts now for notational convenience) for each $i \in I$ is set to be the ($L^1$ normalized) indicator function of a cube with side length $0<h\leq H$, centered in the corresponding cube with side length $H$; see Figure \ref{fig:subsampled data illustration} for a two dimensional example\footnote{For illustration, the cube $\omega_i^{h,H}$ in the figure is centered in $\omega_i^{H}$. However, the relative position of the two cubes is not important in our analysis; see the proofs of Theorem \ref{thm: err ideal sol} and \ref{thm: error loc solution}. The key is that the subsampled Poincar\'e inequality developed in \cite{chen2019function} does not depend on the relative position of the subdomain and the domain.}. For each $i \in I$, these two cubes are denoted by $\omega_i^H$ and $\omega_i^{h,H}$ respectively; we assume they are closed sets, i.e., their boundaries are included.  We will call $H$ the coarse lengthscale, and $h$ is the \textit{subsampled lengthscale}.

The consideration of this subsampled lengthscale is natural both from the perspectives of function approximation and numerical upscaling. In the former scenario, the measurement data of a field function in physics is often the macroscopic averaged quantity, taking a similar form as $[u,\phi^{h,H}_i]$ for some $h\leq H$. In the latter problem, we have the freedom to choose the upscaled information of the multiscale PDEs, so taking a free parameter $h$ in the approach enables us to analyze the algorithm's behavior more thoroughly. Later on, we will see that the parameter $h$ has a non-trivial influence on the subsequent localization and accuracy of the approximation. 

 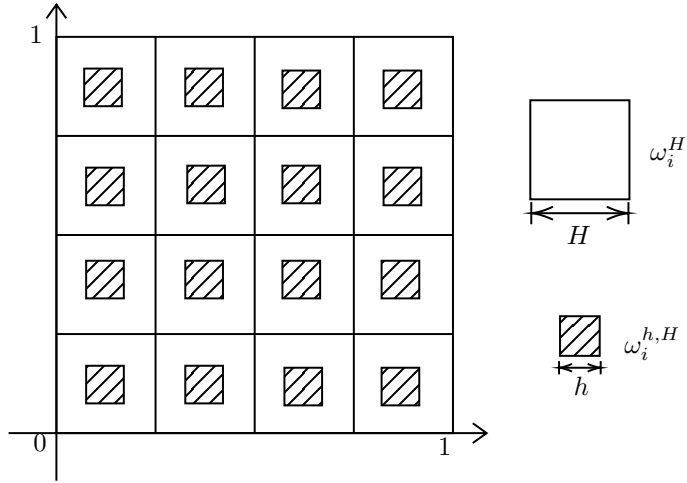
\begin{figure}[t]
    \centering
\tikzset{
pattern size/.store in=\mcSize, 
pattern size = 5pt,
pattern thickness/.store in=\mcThickness, 
pattern thickness = 0.3pt,
pattern radius/.store in=\mcRadius, 
pattern radius = 1pt}
\makeatletter
\pgfutil@ifundefined{pgf@pattern@name@_qbswpsvpp}{
\pgfdeclarepatternformonly[\mcThickness,\mcSize]{_qbswpsvpp}
{\pgfqpoint{0pt}{0pt}}
{\pgfpoint{\mcSize+\mcThickness}{\mcSize+\mcThickness}}
{\pgfpoint{\mcSize}{\mcSize}}
{
\pgfsetcolor{\tikz@pattern@color}
\pgfsetlinewidth{\mcThickness}
\pgfpathmoveto{\pgfqpoint{0pt}{0pt}}
\pgfpathlineto{\pgfpoint{\mcSize+\mcThickness}{\mcSize+\mcThickness}}
\pgfusepath{stroke}
}}
\makeatother

 
\tikzset{
pattern size/.store in=\mcSize, 
pattern size = 5pt,
pattern thickness/.store in=\mcThickness, 
pattern thickness = 0.3pt,
pattern radius/.store in=\mcRadius, 
pattern radius = 1pt}
\makeatletter
\pgfutil@ifundefined{pgf@pattern@name@_ws9792rnq}{
\pgfdeclarepatternformonly[\mcThickness,\mcSize]{_ws9792rnq}
{\pgfqpoint{0pt}{0pt}}
{\pgfpoint{\mcSize+\mcThickness}{\mcSize+\mcThickness}}
{\pgfpoint{\mcSize}{\mcSize}}
{
\pgfsetcolor{\tikz@pattern@color}
\pgfsetlinewidth{\mcThickness}
\pgfpathmoveto{\pgfqpoint{0pt}{0pt}}
\pgfpathlineto{\pgfpoint{\mcSize+\mcThickness}{\mcSize+\mcThickness}}
\pgfusepath{stroke}
}}
\makeatother

 
\tikzset{
pattern size/.store in=\mcSize, 
pattern size = 5pt,
pattern thickness/.store in=\mcThickness, 
pattern thickness = 0.3pt,
pattern radius/.store in=\mcRadius, 
pattern radius = 1pt}
\makeatletter
\pgfutil@ifundefined{pgf@pattern@name@_uactf712w}{
\pgfdeclarepatternformonly[\mcThickness,\mcSize]{_uactf712w}
{\pgfqpoint{0pt}{0pt}}
{\pgfpoint{\mcSize+\mcThickness}{\mcSize+\mcThickness}}
{\pgfpoint{\mcSize}{\mcSize}}
{
\pgfsetcolor{\tikz@pattern@color}
\pgfsetlinewidth{\mcThickness}
\pgfpathmoveto{\pgfqpoint{0pt}{0pt}}
\pgfpathlineto{\pgfpoint{\mcSize+\mcThickness}{\mcSize+\mcThickness}}
\pgfusepath{stroke}
}}
\makeatother

 
\tikzset{
pattern size/.store in=\mcSize, 
pattern size = 5pt,
pattern thickness/.store in=\mcThickness, 
pattern thickness = 0.3pt,
pattern radius/.store in=\mcRadius, 
pattern radius = 1pt}
\makeatletter
\pgfutil@ifundefined{pgf@pattern@name@_zwjo2m91v}{
\pgfdeclarepatternformonly[\mcThickness,\mcSize]{_zwjo2m91v}
{\pgfqpoint{0pt}{0pt}}
{\pgfpoint{\mcSize+\mcThickness}{\mcSize+\mcThickness}}
{\pgfpoint{\mcSize}{\mcSize}}
{
\pgfsetcolor{\tikz@pattern@color}
\pgfsetlinewidth{\mcThickness}
\pgfpathmoveto{\pgfqpoint{0pt}{0pt}}
\pgfpathlineto{\pgfpoint{\mcSize+\mcThickness}{\mcSize+\mcThickness}}
\pgfusepath{stroke}
}}
\makeatother

 
\tikzset{
pattern size/.store in=\mcSize, 
pattern size = 5pt,
pattern thickness/.store in=\mcThickness, 
pattern thickness = 0.3pt,
pattern radius/.store in=\mcRadius, 
pattern radius = 1pt}
\makeatletter
\pgfutil@ifundefined{pgf@pattern@name@_8p0iqeppb}{
\pgfdeclarepatternformonly[\mcThickness,\mcSize]{_8p0iqeppb}
{\pgfqpoint{0pt}{0pt}}
{\pgfpoint{\mcSize+\mcThickness}{\mcSize+\mcThickness}}
{\pgfpoint{\mcSize}{\mcSize}}
{
\pgfsetcolor{\tikz@pattern@color}
\pgfsetlinewidth{\mcThickness}
\pgfpathmoveto{\pgfqpoint{0pt}{0pt}}
\pgfpathlineto{\pgfpoint{\mcSize+\mcThickness}{\mcSize+\mcThickness}}
\pgfusepath{stroke}
}}
\makeatother

 
\tikzset{
pattern size/.store in=\mcSize, 
pattern size = 5pt,
pattern thickness/.store in=\mcThickness, 
pattern thickness = 0.3pt,
pattern radius/.store in=\mcRadius, 
pattern radius = 1pt}
\makeatletter
\pgfutil@ifundefined{pgf@pattern@name@_eamii8n54}{
\pgfdeclarepatternformonly[\mcThickness,\mcSize]{_eamii8n54}
{\pgfqpoint{0pt}{0pt}}
{\pgfpoint{\mcSize+\mcThickness}{\mcSize+\mcThickness}}
{\pgfpoint{\mcSize}{\mcSize}}
{
\pgfsetcolor{\tikz@pattern@color}
\pgfsetlinewidth{\mcThickness}
\pgfpathmoveto{\pgfqpoint{0pt}{0pt}}
\pgfpathlineto{\pgfpoint{\mcSize+\mcThickness}{\mcSize+\mcThickness}}
\pgfusepath{stroke}
}}
\makeatother

 
\tikzset{
pattern size/.store in=\mcSize, 
pattern size = 5pt,
pattern thickness/.store in=\mcThickness, 
pattern thickness = 0.3pt,
pattern radius/.store in=\mcRadius, 
pattern radius = 1pt}
\makeatletter
\pgfutil@ifundefined{pgf@pattern@name@_4uc27p9vl}{
\pgfdeclarepatternformonly[\mcThickness,\mcSize]{_4uc27p9vl}
{\pgfqpoint{0pt}{0pt}}
{\pgfpoint{\mcSize+\mcThickness}{\mcSize+\mcThickness}}
{\pgfpoint{\mcSize}{\mcSize}}
{
\pgfsetcolor{\tikz@pattern@color}
\pgfsetlinewidth{\mcThickness}
\pgfpathmoveto{\pgfqpoint{0pt}{0pt}}
\pgfpathlineto{\pgfpoint{\mcSize+\mcThickness}{\mcSize+\mcThickness}}
\pgfusepath{stroke}
}}
\makeatother

 
\tikzset{
pattern size/.store in=\mcSize, 
pattern size = 5pt,
pattern thickness/.store in=\mcThickness, 
pattern thickness = 0.3pt,
pattern radius/.store in=\mcRadius, 
pattern radius = 1pt}
\makeatletter
\pgfutil@ifundefined{pgf@pattern@name@_cwczrvvny}{
\pgfdeclarepatternformonly[\mcThickness,\mcSize]{_cwczrvvny}
{\pgfqpoint{0pt}{0pt}}
{\pgfpoint{\mcSize+\mcThickness}{\mcSize+\mcThickness}}
{\pgfpoint{\mcSize}{\mcSize}}
{
\pgfsetcolor{\tikz@pattern@color}
\pgfsetlinewidth{\mcThickness}
\pgfpathmoveto{\pgfqpoint{0pt}{0pt}}
\pgfpathlineto{\pgfpoint{\mcSize+\mcThickness}{\mcSize+\mcThickness}}
\pgfusepath{stroke}
}}
\makeatother

 
\tikzset{
pattern size/.store in=\mcSize, 
pattern size = 5pt,
pattern thickness/.store in=\mcThickness, 
pattern thickness = 0.3pt,
pattern radius/.store in=\mcRadius, 
pattern radius = 1pt}
\makeatletter
\pgfutil@ifundefined{pgf@pattern@name@_vgeuoys8b}{
\pgfdeclarepatternformonly[\mcThickness,\mcSize]{_vgeuoys8b}
{\pgfqpoint{0pt}{0pt}}
{\pgfpoint{\mcSize+\mcThickness}{\mcSize+\mcThickness}}
{\pgfpoint{\mcSize}{\mcSize}}
{
\pgfsetcolor{\tikz@pattern@color}
\pgfsetlinewidth{\mcThickness}
\pgfpathmoveto{\pgfqpoint{0pt}{0pt}}
\pgfpathlineto{\pgfpoint{\mcSize+\mcThickness}{\mcSize+\mcThickness}}
\pgfusepath{stroke}
}}
\makeatother

 
\tikzset{
pattern size/.store in=\mcSize, 
pattern size = 5pt,
pattern thickness/.store in=\mcThickness, 
pattern thickness = 0.3pt,
pattern radius/.store in=\mcRadius, 
pattern radius = 1pt}
\makeatletter
\pgfutil@ifundefined{pgf@pattern@name@_hoandl6qy}{
\pgfdeclarepatternformonly[\mcThickness,\mcSize]{_hoandl6qy}
{\pgfqpoint{0pt}{0pt}}
{\pgfpoint{\mcSize+\mcThickness}{\mcSize+\mcThickness}}
{\pgfpoint{\mcSize}{\mcSize}}
{
\pgfsetcolor{\tikz@pattern@color}
\pgfsetlinewidth{\mcThickness}
\pgfpathmoveto{\pgfqpoint{0pt}{0pt}}
\pgfpathlineto{\pgfpoint{\mcSize+\mcThickness}{\mcSize+\mcThickness}}
\pgfusepath{stroke}
}}
\makeatother

 
\tikzset{
pattern size/.store in=\mcSize, 
pattern size = 5pt,
pattern thickness/.store in=\mcThickness, 
pattern thickness = 0.3pt,
pattern radius/.store in=\mcRadius, 
pattern radius = 1pt}
\makeatletter
\pgfutil@ifundefined{pgf@pattern@name@_qk2okc6ka}{
\pgfdeclarepatternformonly[\mcThickness,\mcSize]{_qk2okc6ka}
{\pgfqpoint{0pt}{0pt}}
{\pgfpoint{\mcSize+\mcThickness}{\mcSize+\mcThickness}}
{\pgfpoint{\mcSize}{\mcSize}}
{
\pgfsetcolor{\tikz@pattern@color}
\pgfsetlinewidth{\mcThickness}
\pgfpathmoveto{\pgfqpoint{0pt}{0pt}}
\pgfpathlineto{\pgfpoint{\mcSize+\mcThickness}{\mcSize+\mcThickness}}
\pgfusepath{stroke}
}}
\makeatother

 
\tikzset{
pattern size/.store in=\mcSize, 
pattern size = 5pt,
pattern thickness/.store in=\mcThickness, 
pattern thickness = 0.3pt,
pattern radius/.store in=\mcRadius, 
pattern radius = 1pt}
\makeatletter
\pgfutil@ifundefined{pgf@pattern@name@_he3vxhz9l}{
\pgfdeclarepatternformonly[\mcThickness,\mcSize]{_he3vxhz9l}
{\pgfqpoint{0pt}{0pt}}
{\pgfpoint{\mcSize+\mcThickness}{\mcSize+\mcThickness}}
{\pgfpoint{\mcSize}{\mcSize}}
{
\pgfsetcolor{\tikz@pattern@color}
\pgfsetlinewidth{\mcThickness}
\pgfpathmoveto{\pgfqpoint{0pt}{0pt}}
\pgfpathlineto{\pgfpoint{\mcSize+\mcThickness}{\mcSize+\mcThickness}}
\pgfusepath{stroke}
}}
\makeatother

 
\tikzset{
pattern size/.store in=\mcSize, 
pattern size = 5pt,
pattern thickness/.store in=\mcThickness, 
pattern thickness = 0.3pt,
pattern radius/.store in=\mcRadius, 
pattern radius = 1pt}
\makeatletter
\pgfutil@ifundefined{pgf@pattern@name@_gsb68qa3i}{
\pgfdeclarepatternformonly[\mcThickness,\mcSize]{_gsb68qa3i}
{\pgfqpoint{0pt}{0pt}}
{\pgfpoint{\mcSize+\mcThickness}{\mcSize+\mcThickness}}
{\pgfpoint{\mcSize}{\mcSize}}
{
\pgfsetcolor{\tikz@pattern@color}
\pgfsetlinewidth{\mcThickness}
\pgfpathmoveto{\pgfqpoint{0pt}{0pt}}
\pgfpathlineto{\pgfpoint{\mcSize+\mcThickness}{\mcSize+\mcThickness}}
\pgfusepath{stroke}
}}
\makeatother

 
\tikzset{
pattern size/.store in=\mcSize, 
pattern size = 5pt,
pattern thickness/.store in=\mcThickness, 
pattern thickness = 0.3pt,
pattern radius/.store in=\mcRadius, 
pattern radius = 1pt}
\makeatletter
\pgfutil@ifundefined{pgf@pattern@name@_vtz415f9b}{
\pgfdeclarepatternformonly[\mcThickness,\mcSize]{_vtz415f9b}
{\pgfqpoint{0pt}{0pt}}
{\pgfpoint{\mcSize+\mcThickness}{\mcSize+\mcThickness}}
{\pgfpoint{\mcSize}{\mcSize}}
{
\pgfsetcolor{\tikz@pattern@color}
\pgfsetlinewidth{\mcThickness}
\pgfpathmoveto{\pgfqpoint{0pt}{0pt}}
\pgfpathlineto{\pgfpoint{\mcSize+\mcThickness}{\mcSize+\mcThickness}}
\pgfusepath{stroke}
}}
\makeatother

 
\tikzset{
pattern size/.store in=\mcSize, 
pattern size = 5pt,
pattern thickness/.store in=\mcThickness, 
pattern thickness = 0.3pt,
pattern radius/.store in=\mcRadius, 
pattern radius = 1pt}
\makeatletter
\pgfutil@ifundefined{pgf@pattern@name@_oyha77j1e}{
\pgfdeclarepatternformonly[\mcThickness,\mcSize]{_oyha77j1e}
{\pgfqpoint{0pt}{0pt}}
{\pgfpoint{\mcSize+\mcThickness}{\mcSize+\mcThickness}}
{\pgfpoint{\mcSize}{\mcSize}}
{
\pgfsetcolor{\tikz@pattern@color}
\pgfsetlinewidth{\mcThickness}
\pgfpathmoveto{\pgfqpoint{0pt}{0pt}}
\pgfpathlineto{\pgfpoint{\mcSize+\mcThickness}{\mcSize+\mcThickness}}
\pgfusepath{stroke}
}}
\makeatother

 
\tikzset{
pattern size/.store in=\mcSize, 
pattern size = 5pt,
pattern thickness/.store in=\mcThickness, 
pattern thickness = 0.3pt,
pattern radius/.store in=\mcRadius, 
pattern radius = 1pt}
\makeatletter
\pgfutil@ifundefined{pgf@pattern@name@_6u2xvqdwk}{
\pgfdeclarepatternformonly[\mcThickness,\mcSize]{_6u2xvqdwk}
{\pgfqpoint{0pt}{0pt}}
{\pgfpoint{\mcSize+\mcThickness}{\mcSize+\mcThickness}}
{\pgfpoint{\mcSize}{\mcSize}}
{
\pgfsetcolor{\tikz@pattern@color}
\pgfsetlinewidth{\mcThickness}
\pgfpathmoveto{\pgfqpoint{0pt}{0pt}}
\pgfpathlineto{\pgfpoint{\mcSize+\mcThickness}{\mcSize+\mcThickness}}
\pgfusepath{stroke}
}}
\makeatother

 
\tikzset{
pattern size/.store in=\mcSize, 
pattern size = 5pt,
pattern thickness/.store in=\mcThickness, 
pattern thickness = 0.3pt,
pattern radius/.store in=\mcRadius, 
pattern radius = 1pt}
\makeatletter
\pgfutil@ifundefined{pgf@pattern@name@_nyvzlbke7}{
\pgfdeclarepatternformonly[\mcThickness,\mcSize]{_nyvzlbke7}
{\pgfqpoint{0pt}{0pt}}
{\pgfpoint{\mcSize+\mcThickness}{\mcSize+\mcThickness}}
{\pgfpoint{\mcSize}{\mcSize}}
{
\pgfsetcolor{\tikz@pattern@color}
\pgfsetlinewidth{\mcThickness}
\pgfpathmoveto{\pgfqpoint{0pt}{0pt}}
\pgfpathlineto{\pgfpoint{\mcSize+\mcThickness}{\mcSize+\mcThickness}}
\pgfusepath{stroke}
}}
\makeatother
\tikzset{every picture/.style={line width=0.75pt}} 

\begin{tikzpicture}[x=0.75pt,y=0.75pt,yscale=-1,xscale=1]

\draw  [draw opacity=0] (165,53) -- (365,53) -- (365,253) -- (165,253) -- cycle ; \draw   (215,53) -- (215,253)(265,53) -- (265,253)(315,53) -- (315,253) ; \draw   (165,103) -- (365,103)(165,153) -- (365,153)(165,203) -- (365,203) ; \draw   (165,53) -- (365,53) -- (365,253) -- (165,253) -- cycle ;
\draw  [pattern=_qbswpsvpp,pattern size=6pt,pattern thickness=0.75pt,pattern radius=0pt, pattern color={rgb, 255:red, 0; green, 0; blue, 0}] (179,69) -- (198,69) -- (198,88) -- (179,88) -- cycle ;
\draw  [pattern=_ws9792rnq,pattern size=6pt,pattern thickness=0.75pt,pattern radius=0pt, pattern color={rgb, 255:red, 0; green, 0; blue, 0}] (180,119) -- (199,119) -- (199,138) -- (180,138) -- cycle ;
\draw  [pattern=_uactf712w,pattern size=6pt,pattern thickness=0.75pt,pattern radius=0pt, pattern color={rgb, 255:red, 0; green, 0; blue, 0}] (230,69) -- (249,69) -- (249,88) -- (230,88) -- cycle ;
\draw  [pattern=_zwjo2m91v,pattern size=6pt,pattern thickness=0.75pt,pattern radius=0pt, pattern color={rgb, 255:red, 0; green, 0; blue, 0}] (180,219) -- (199,219) -- (199,238) -- (180,238) -- cycle ;
\draw  [pattern=_8p0iqeppb,pattern size=6pt,pattern thickness=0.75pt,pattern radius=0pt, pattern color={rgb, 255:red, 0; green, 0; blue, 0}] (180,166) -- (199,166) -- (199,185) -- (180,185) -- cycle ;
\draw  [pattern=_eamii8n54,pattern size=6pt,pattern thickness=0.75pt,pattern radius=0pt, pattern color={rgb, 255:red, 0; green, 0; blue, 0}] (330,119) -- (349,119) -- (349,138) -- (330,138) -- cycle ;
\draw  [pattern=_4uc27p9vl,pattern size=6pt,pattern thickness=0.75pt,pattern radius=0pt, pattern color={rgb, 255:red, 0; green, 0; blue, 0}] (330,70) -- (349,70) -- (349,89) -- (330,89) -- cycle ;
\draw  [pattern=_cwczrvvny,pattern size=6pt,pattern thickness=0.75pt,pattern radius=0pt, pattern color={rgb, 255:red, 0; green, 0; blue, 0}] (280,220) -- (299,220) -- (299,239) -- (280,239) -- cycle ;
\draw  [pattern=_vgeuoys8b,pattern size=6pt,pattern thickness=0.75pt,pattern radius=0pt, pattern color={rgb, 255:red, 0; green, 0; blue, 0}] (279,166) -- (298,166) -- (298,185) -- (279,185) -- cycle ;
\draw  [pattern=_hoandl6qy,pattern size=6pt,pattern thickness=0.75pt,pattern radius=0pt, pattern color={rgb, 255:red, 0; green, 0; blue, 0}] (279,118) -- (298,118) -- (298,137) -- (279,137) -- cycle ;
\draw  [pattern=_qk2okc6ka,pattern size=6pt,pattern thickness=0.75pt,pattern radius=0pt, pattern color={rgb, 255:red, 0; green, 0; blue, 0}] (279,70) -- (298,70) -- (298,89) -- (279,89) -- cycle ;
\draw  [pattern=_he3vxhz9l,pattern size=6pt,pattern thickness=0.75pt,pattern radius=0pt, pattern color={rgb, 255:red, 0; green, 0; blue, 0}] (230,219) -- (249,219) -- (249,238) -- (230,238) -- cycle ;
\draw  [pattern=_gsb68qa3i,pattern size=6pt,pattern thickness=0.75pt,pattern radius=0pt, pattern color={rgb, 255:red, 0; green, 0; blue, 0}] (230,166) -- (249,166) -- (249,185) -- (230,185) -- cycle ;
\draw  [pattern=_vtz415f9b,pattern size=6pt,pattern thickness=0.75pt,pattern radius=0pt, pattern color={rgb, 255:red, 0; green, 0; blue, 0}] (231,118) -- (250,118) -- (250,137) -- (231,137) -- cycle ;
\draw  [pattern=_oyha77j1e,pattern size=6pt,pattern thickness=0.75pt,pattern radius=0pt, pattern color={rgb, 255:red, 0; green, 0; blue, 0}] (329,220) -- (348,220) -- (348,239) -- (329,239) -- cycle ;
\draw  [pattern=_6u2xvqdwk,pattern size=6pt,pattern thickness=0.75pt,pattern radius=0pt, pattern color={rgb, 255:red, 0; green, 0; blue, 0}] (329,166) -- (348,166) -- (348,185) -- (329,185) -- cycle ;

\draw   (404,85) -- (454,85) -- (454,135) -- (404,135) -- cycle ;
\draw    (404.5,142) -- (453.5,142) ;
\draw [shift={(453.5,142)}, rotate = 180] [color={rgb, 255:red, 0; green, 0; blue, 0 }  ][line width=0.75]    (0,5.59) -- (0,-5.59)(10.93,-3.29) .. controls (6.95,-1.4) and (3.31,-0.3) .. (0,0) .. controls (3.31,0.3) and (6.95,1.4) .. (10.93,3.29)   ;
\draw [shift={(404.5,142)}, rotate = 0] [color={rgb, 255:red, 0; green, 0; blue, 0 }  ][line width=0.75]    (0,5.59) -- (0,-5.59)(10.93,-3.29) .. controls (6.95,-1.4) and (3.31,-0.3) .. (0,0) .. controls (3.31,0.3) and (6.95,1.4) .. (10.93,3.29)   ;
\draw  [pattern=_nyvzlbke7,pattern size=6pt,pattern thickness=0.75pt,pattern radius=0pt, pattern color={rgb, 255:red, 0; green, 0; blue, 0}] (419,194) -- (439,194) -- (439,214) -- (419,214) -- cycle ;
\draw    (418.5,220) -- (439.5,220) ;
\draw [shift={(439.5,220)}, rotate = 180] [color={rgb, 255:red, 0; green, 0; blue, 0 }  ][line width=0.75]    (0,3.35) -- (0,-3.35)(6.56,-1.97) .. controls (4.17,-0.84) and (1.99,-0.18) .. (0,0) .. controls (1.99,0.18) and (4.17,0.84) .. (6.56,1.97)   ;
\draw [shift={(418.5,220)}, rotate = 0] [color={rgb, 255:red, 0; green, 0; blue, 0 }  ][line width=0.75]    (0,3.35) -- (0,-3.35)(6.56,-1.97) .. controls (4.17,-0.84) and (1.99,-0.18) .. (0,0) .. controls (1.99,0.18) and (4.17,0.84) .. (6.56,1.97)   ;

\draw  (140.89,253) -- (381.98,253)(165,36.66) -- (165,277.04) (374.98,248) -- (381.98,253) -- (374.98,258) (160,43.66) -- (165,36.66) -- (170,43.66)  ;

\draw (421,147) node [anchor=north west][inner sep=0.75pt]   [align=left] {$\displaystyle H$};
\draw (424.5,223) node [anchor=north west][inner sep=0.75pt]   [align=left] {$\displaystyle h$};
\draw (152,252) node [anchor=north west][inner sep=0.75pt]   [align=left] {$\displaystyle 0$};
\draw (356,254) node [anchor=north west][inner sep=0.75pt]   [align=left] {$\displaystyle 1$};
\draw (150,46) node [anchor=north west][inner sep=0.75pt]   [align=left] {$\displaystyle 1$};
\draw (463,102) node [anchor=north west][inner sep=0.75pt]   [align=left] {$\displaystyle \omega ^{H}_{i}$};
\draw (450,198) node [anchor=north west][inner sep=0.75pt]   [align=left] {$\displaystyle \omega ^{h,H}_{i}$};
\end{tikzpicture}
    \caption{Illustration of Subsampled Data: $H=1/4, h=1/10$}
    \label{fig:subsampled data illustration}
\end{figure}
 Note that the choice of $\omega_i^{H}$ and $\omega^{h,H}_i$ being cubes here is for convenience of analysis only; results in this paper will generalize easily to regular domains with other shapes. 
\subsection{Basis Functions and Localization}
\label{subsec: Basis Functions and Localization}Before outlining our main contributions (which are in the next subsection), we make precise here the definition of the basis functions and their localization. Per the discussion in Subsection \ref{subsec: Background and Context} and especially the formula \eqref{eqn: ideal sol}, the basis function $\psi_i^{h,H}$ (we add the superscripts for notational clarity) is the solution of the following variational problem:
 \begin{equation}
    \label{eqn: optimization def basis}
    \begin{aligned}
    \psi_{i}^{h,H} = \text{argmin}_{\psi \in H_0^1(\Omega)}\quad  &\|\psi\|_{H_a^1(\Omega)}^2 \\
     \text{subject to}\quad &[\psi, \phi_j^{h,H}] = \delta_{i,j}\ \  \text{for}\ \  j \in I \, ,
 \end{aligned}
    \end{equation}
where, we have used the notation $\|\psi\|_{H_a^1(\Omega)}^2:=\int_{\Omega} a|\nabla \psi|^2$. This formulation is a consequence of the two properties that are mentioned in Subsection \ref{subsec: Background and Context}: 
\[\text{(I) } \operatorname{span}~\{\psi_i^{h,H}\}_{i\in I}=\operatorname{span}~\{ \cL^{-1}\phi_i^{h,H}\}_{i \in I} \quad \text{ and } \quad \text{(II) } [\psi_i^{h,H},\phi_j^{h,H}]=\delta_{ij}\, .\]
For ease of computation, in practice we will solve a localized version of \eqref{eqn: optimization def basis} instead:
 \begin{equation}
    \label{eqn: localized version}
    \begin{aligned}
    \psi_{i}^{h,H,l} = \text{argmin}_{\psi \in H_0^1(\rN^l(\omega_i^H))}\quad  &\|\psi\|_{H_a^1(\rN^l(\omega_i^H))}^2 \\
     \text{subject to}\quad &[\psi, \phi_j^{h,H}] = \delta_{i,j}\ \  \text{for}\ \  j \in I \, ,
 \end{aligned}
    \end{equation}
where $l \in \bN$ is called the \textit{oversampled layer}. We have $\rN^0(\omega_i^H)=\omega_i^H$, and recursively:
\begin{equation}
    \rN^l(\omega_i^H):= \bigcup \{ \omega_j^{H}, j \in I: \omega_j^H \cap \rN^{l-1}(\omega_i^H) \neq \emptyset\}\, .
\end{equation}
Then, the level-$l$ localized solution for Problem 2 is
\begin{equation}
\label{eqn: loc sol}
    u^{\text{loc},l}:=\sum_{i\in I}[u,\phi_i^{h,H}]\psi_i^{h,H,l}\, .
\end{equation}
By abuse of notation, we will equate $u^{\text{loc},\infty}=u^{\text{ideal}}$.
The energy error and $L^2$ error of this localized solution are written as
\begin{equation}
    \begin{aligned}
    &e^{h,H,l}_{1}(a,u)=\|u-u^{\text{loc},l}\|_{H_a^1(\Omega)}\, ,\\
    &e^{h,H,l}_{0}(a,u)=\|u-u^{\text{loc},l}\|_{L^2(\Omega)}\, .
    \end{aligned}
\end{equation}
For Problem 1, we also get a solution $\tilde{u}^{\text{loc},l}$ by using the localized basis functions $\{\psi_{i}^{h,H,l}\}_{i\in I}$ and the Galerkin method. This solution is different from $u^{\text{loc},l}$ in general, unless $l=\infty$, i.e., in the ideal case.
The corresponding energy error and $L^2$ error of $\tilde{u}^{\text{loc},l}$ are denoted by $\tilde{e}^{h,H,l}_{1}(a,u)$ and $\tilde{e}^{h,H,l}_{0}(a,u)$. 

We call $u^{\text{loc},l}$ the \textit{recovery solution} of Problem 2, and $\tilde{u}^{\text{loc},l}$ the \textit{Galerkin solution} of Problem 1. The computation costs of the two solutions are different -- the former only requires solving the basis functions, while the latter also needs to solve an upscaled equation. Their errors in the solution are called the recovery error and Galerkin error, respectively. 

Under the above setup, our precise goal in this paper is to understand how the recovery error and Galerkin error depend on the following three factors: 
\begin{enumerate}
    \item The coarse scale $H$ and subsampled lengthscale $h$;
    \item The oversampled layer $l$ (corresponded to the computational budget);
    \item The regularity of function $u$ (in function approximation, it is given as prior information; in multiscale PDEs, it is influenced by the right-hand side $f$).
\end{enumerate}
Note that the regularity of a function is also intimately connected to the dimension parameter $d$.
\subsection{Our Contributions}
\label{subsec: our controbution}
In the first part of this work, we consider the finite regime of the subsampled lengthscale, i.e., $h$ is a strictly positive number. 
\begin{itemize}
    \item We provide numerical experiments and theoretical analysis of these recovery and Galerkin errors. We show that for a fixed $h/H$, if $l=O(\log (1/H))$, then both energy errors are of $O(H)$ and both $L^2$ errors are of $O(H^2)$.
    \item Further, we decompose the error into two parts: the approximation error of the ideal solution and the localization error. We demonstrate that there is a competition between the two. Roughly, reducing $h$ worsens the former, while improving the latter, for a fixed $H$ and $l$. This leads to a novel trade-off that was not investigated before -- choosing an appropriate $h$ can benefit the final accuracy.
    \item Moreover, there appears a fundamental difference between $e^{h,H,l}_{0}(a,u)$ and the other three errors, when $d\geq 2$. For a fixed $l$ and $h/H$, the former remains bounded as $H \to 0$, while the other three blow up. 
    We characterize this phenomenon both theoretically and numerically.
\end{itemize}
In the second part of this work, we consider the small limit regime of $h$. When $d\geq 2$, the error estimates in the first part blow up as $h\to 0$. To remedy this issue in the context of scattered data approximation, we propose to use a singular weight function in the algorithm. The weight function puts more importance on the subsampled data and avoids the degeneracy, given the target function has improved regularity property around these data. Numerical experiments and theoretical analysis are presented to offer a quantitative explanation of this phenomenon.

\subsection{Related Works} We review the related works below.
\subsubsection{Numerical Upscaling} There have been vast literature on numerical upscaling of multiscale PDEs. For our context, i.e., elliptic PDEs with rough coefficients, rigorous theoretical results include Generalized Finite Element Methods (GFEM) \cite{babuvska1994special,babuska2011optimal}, Harmonic Coordinates \cite{owhadi2007metric}, Local Orthogonal Decomposition (LOD) \cite{malqvist_localization_2014,henning2013oversampling,kornhuber2018analysis,engwer2019efficient, hauck2021super,maier2021high}, Gamblets related approaches \cite{owhadi2011localized,owhadi2014polyharmonic,owhadi2015bayesian,owhadi_multigrid_2017,hou_sparse_2017,owhadi2019operator}, and generalizations of Multiscale Finite Element Methods (MsFEM) \cite{hou2015optimal,chung2018constraint,li2019convergence,fu2019edge,chen2020exponential,chen2021exponentially}, etc. Among them, the ones most related to this paper are LOD and Gamblets; the connection has been explained in Subsection \ref{subsec: a common approach}. Indeed, in Gamblets \cite{owhadi_multigrid_2017,owhadi2019operator}, the author has formulated the framework in the perspective of optimal recovery, bridging numerical upscaling to game-theoretical approaches and Gaussian process regressions for function recovery. This formulation connects our Problem 1 and Problem 2 in Subsection \ref{subsec: Background and Context}.

A main component in LOD and Gamblets is the localization problem -- the ideal multiscale basis functions need to be localized for efficient computation. In this paper, our localization strategy, as outlined in Subsection \ref{subsec: Basis Functions and Localization}, follows from the one in \cite{malqvist_localization_2014,owhadi_multigrid_2017}. The main difference is that our measurement function $\phi_i^{h,H}$ contains a subsampled lengthscale parameter, which makes the analysis more delicate. Moreover, in addition to showing a trade-off between approximation errors and localization errors regarding the oversampling parameter $l$, our setup allows us to discover another trade-off regarding the subsampled lengthscale $h$ -- a good choice of $h$ can improve the algorithm in \cite{malqvist_localization_2014, owhadi_multigrid_2017}. We also remark that the work \cite{li2018error} has considered a similar algorithm for convection-dominated diffusion equations, where $h$ is fixed to be the small scale grid size, but the analysis there did not reveal the trade-off here.
\subsubsection{Function Approximation}
Function approximation via scattered data is a classical problem in numerical analysis (interpolation), statistics (non-parametric regression), and machine learning (supervised learning). For the type of scattered data, the most frequently considered one is the pointwise data \cite{wendland2004scattered}. The subsampled data introduce an additional small scale parameter $h$, and are generalizations to pointwise data. Our earlier work \cite{chen2019function} performed some analysis on this aspect, and provides some theoretical foundation for this paper. The multiscale basis functions constructed for the subsampled data allow us to capture the heterogeneous behaviors of the target function.

The method in Subsection \ref{subsec: a common approach} connects to the graph Laplacian approach in semisupervised learning. In the machine learning literature, the degeneracy issue of graph Laplacians has long been studied, and various approaches have been proposed to remedy this issue. Among them, the one that is most related to this paper is the weighted graph Laplacian method \cite{shi2017weighted,calder2019properly}, which puts more weights around the labeled data to avoid degeneracy. The second part of this work presents some analysis for this type of idea in the context of numerical analysis.

\subsection{Organization} The rest of this paper is organized as follows. In Section \ref{subsec: Finite Regime of Subsampled Lengthscales} we discuss the regime that $0<h\leq H$. We present numerical experiments and theoretical analysis of these Galerkin errors in numerical upscaling, and recovery errors in function approximation. In Section \ref{subsec Small Limit Regime of Subsampled Lengthscales}, we consider the regime $h \to 0$, a case that degeneracy may occur. We use a singular weight function to deal with this issue both numerically and theoretically. Section \ref{sec:proof} contains all the proofs in this paper. We summarize, discuss, and conclude this paper in Section \ref{sec Concluding Remarks}.
\section{Finite Regime of Subsampled Lengthscales} 
\label{subsec: Finite Regime of Subsampled Lengthscales}
In this section, we study the finite regime of $h$, i.e., $0<h\leq H$. 
We start with the ideal solution $u^{\text{ideal}}$, or equivalently $u^{\text{loc},\infty}$, and then move to the localized solution $u^{\text{loc},l}$ and $\tilde{u}^{\text{loc},l}$  for finite $l$.  Experiments are presented first, followed with theoretical analysis. Special attention is paid to the dependence of accuracy on the coarse scale $H$, subsampled lengthscale $h$ and when in the localized case, the oversampling parameter $l$. 

\subsection{Experiments: Ideal Solution} 
\label{subsec: Experiments: Ideal Solution}
In this subsection, we perform a numerical study of the effect of $h$ in $e^{h,H,\infty}_{1}(a,u)$ and $e^{h,H,\infty}_{0}(a,u)$, for $d=1$ and $2$ respectively. 

In this ideal case, the recovery solution and Galerkin solution are the same, and in our computation, we directly solve a PDE to get these solutions. Theoretical analysis of these numerical results is given in Subsection \ref{subsec: Analysis: Ideal Solution}.

\subsubsection{One Dimensional Example}
\label{subsec: ideal 1d experiments}
We consider the domain $\Omega=[0,1]$. The rough coefficient $a(x)$ is a sample drawn from the random field
\begin{equation}
    \xi=1+0.5\times \sin (\sum_{k=1}^{100}\eta_k\cos(kx)+\zeta_k\sin(kx))\, ,
\end{equation}
where $\eta_k,\zeta_k, 1\leq k \leq 100$ are i.i.d. random variables uniformly distributed in $[-0.5,0.5]$; see the upper left of Figure \ref{fig: 1d, a f} for a single realization. The right-hand side $f$ is drawn from the Gaussian process $\cN(0,(-\Delta)^{-0.5-\delta})$ for $\delta=10^{-2}$; this guarantees $f \in H^{t}(\Omega)$ for any $t<\delta$ but not $t\geq \delta$; see the upper right of Figure \ref{fig: 1d, a f} for a single realization of this process. Note that this set-up of $f$ ensures that it is roughly an element in $L^2(\Omega)$ and has no apparent higher regularity. This is important because we do not want $f$ to be too regular to influence the results, as our focus is on $f\in L^2(\Omega)$.
 \begin{figure}[!htb]
    \centering
    \includegraphics[width=6cm]{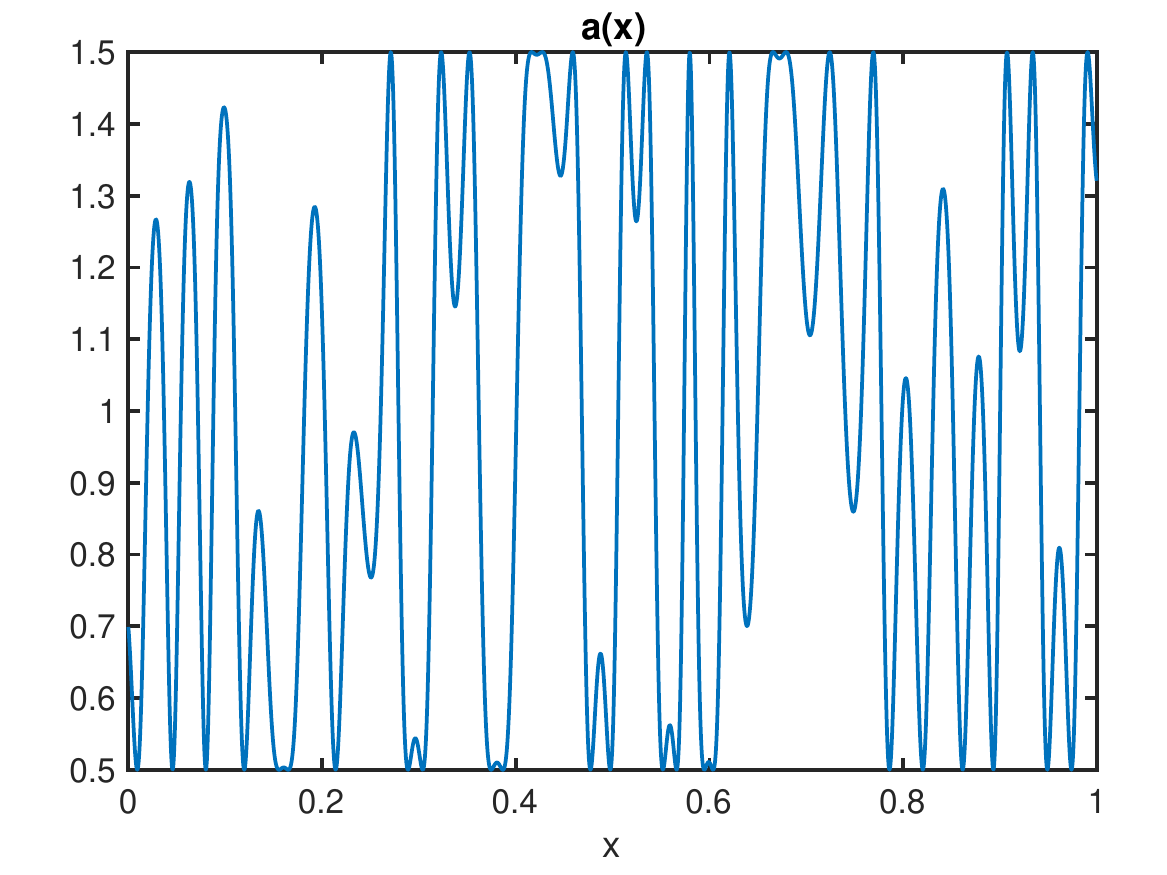}
    \includegraphics[width=6cm]{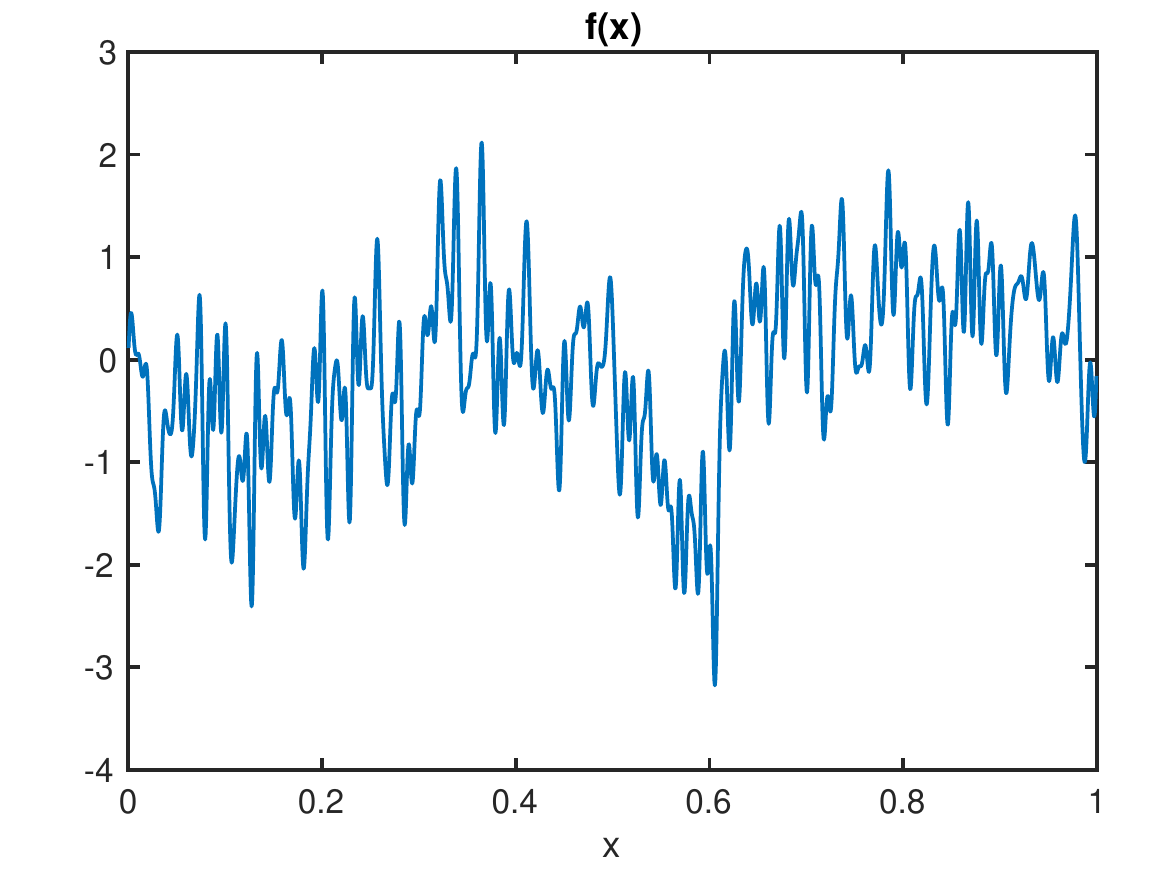}
    \includegraphics[width=6cm]{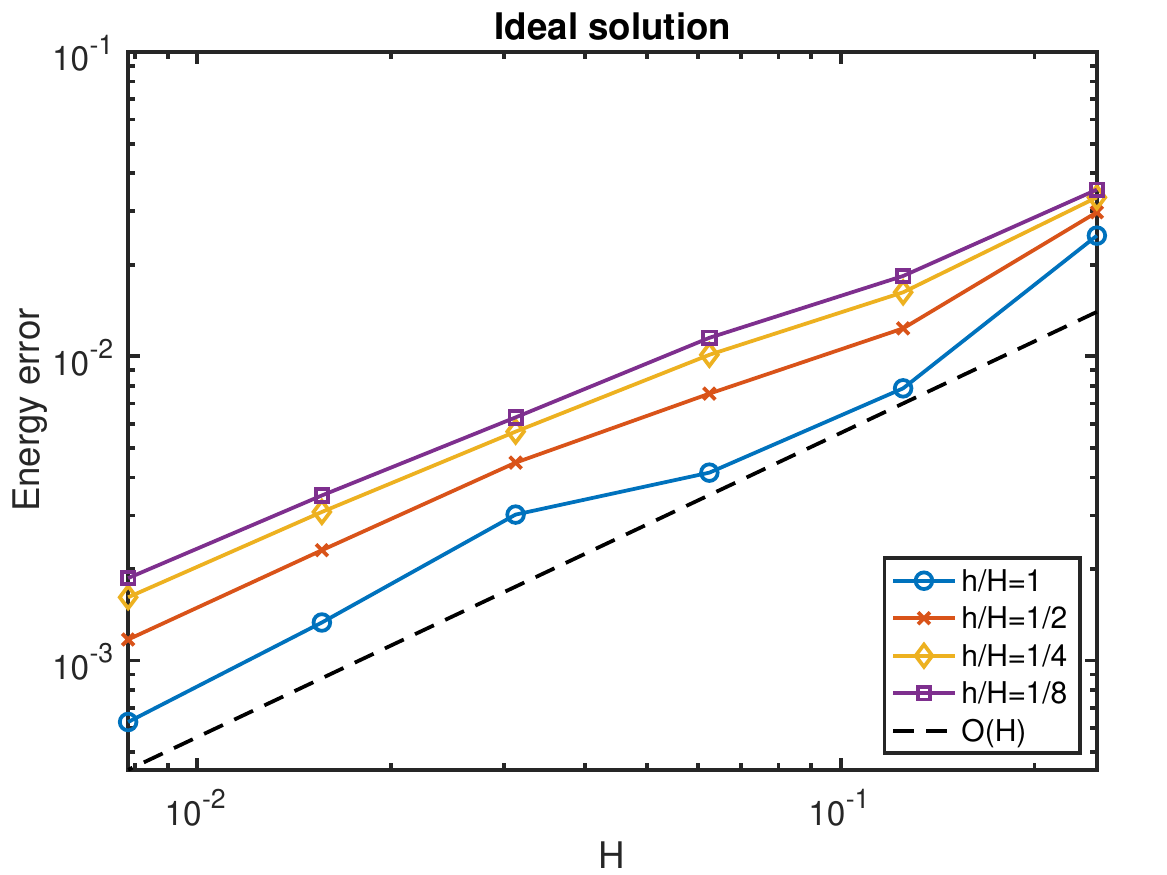}
    \includegraphics[width=6cm]{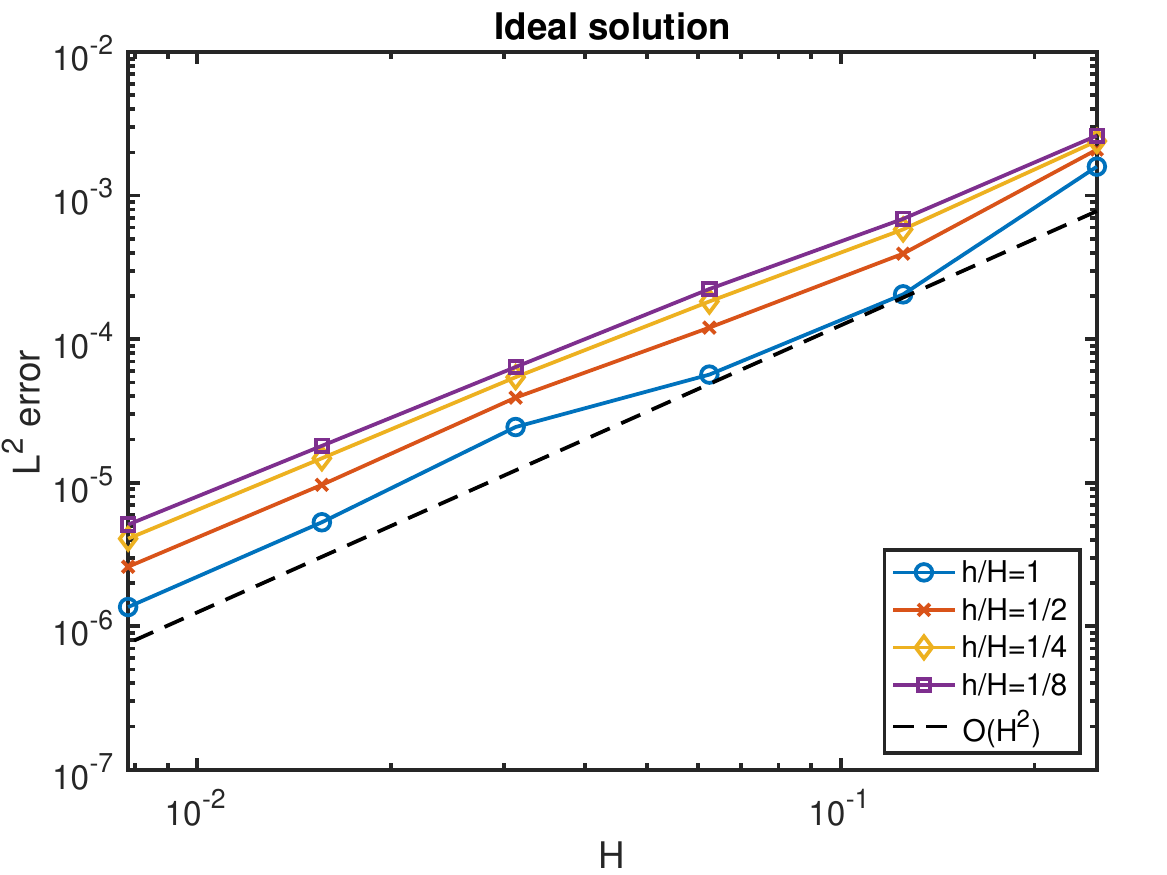}
    \caption{1D example, ideal solution. Upper left: $a(x)$; upper right: $f(x)$; lower left: energy error; lower right: $L^2$ error.}
    \label{fig: 1d, a f}
    \end{figure} 
    
In the lower part of Figure \ref{fig: 1d, a f}, we output the energy errors and $L^2$ errors of the ideal solution, $e^{h,H,\infty}_{1}(a,u)$ and $e^{h,H,\infty}_{0}(a,u)$, for $H=2^{-2},2^{-3},...,2^{-7}$ and the subsampled ratio $h/H=1,1/2,1/4,1/8$. The grid size we use to discretize the operator is set to be $2^{-11}$. These two figures lead to the following observations:
\begin{itemize}
    \item For the ideal solution, the energy error decays linearly with respect to the coarse scale $H$, while the $L^2$ error decays quadratically. 
    \item Decreasing $h$ leads to a decrease of accuracy.
\end{itemize}
In the next subsection, we move to a two dimensional example to further confirm these observations.
\subsubsection{Two Dimensional Example}
\label{subsec: ideal 2d experiments}
We consider $\Omega=[0,1]^2$. The coefficient $a(x)$ is chosen as 
\begin{equation}
\label{eqn: a(x)}
\begin{aligned} 
a(x)=\frac{1}{6}\left(\frac{1.1+\sin \left(2 \pi x_1 / \epsilon_{1}\right)}{1.1+\sin \left(2 \pi x_2 / \epsilon_{1}\right)}+\frac{1.1+\sin \left(2 \pi x_2 / \epsilon_{2}\right)}{1.1+\cos \left(2 \pi x_1 / \epsilon_{2}\right)}+\frac{1.1+\cos \left(2 \pi x_1 / \epsilon_{3}\right)}{1.1+\sin \left(2 \pi x_2 / \epsilon_{3}\right)}\right.\\\left.+\frac{1.1+\sin \left(2 \pi x_2 / \epsilon_{4}\right)}{1.1+\cos \left(2 \pi x_1 / \epsilon_{4}\right)}+\frac{1.1+\cos \left(2 \pi x_1 / \epsilon_{5}\right)}{1.1+\sin \left(2 \pi x_2 / \epsilon_{5}\right)}+\sin \left(4 x_1^{2} x_2^{2}\right)+1\right) \, ,\end{aligned}
\end{equation}
where $\epsilon_1=1/5$, $\epsilon_2=1/13$, $\epsilon_3=1/17$, $\epsilon_4=1/31$, $\epsilon_5=1/65$. For the right-hand side, we sample two independent one-dimensional process in the last subsection, denoted by $f_1(x_1)$ and $f_2(x_2)$, and we set
$f(x)=f_1(x_1)f_2(x_2)$. This guarantees $f \in H^{t}(\Omega)$ for any $t<\delta$ but not $t\geq \delta$ in two dimensions.
 \begin{figure}[!htb]
    \centering
    \includegraphics[width=6cm]{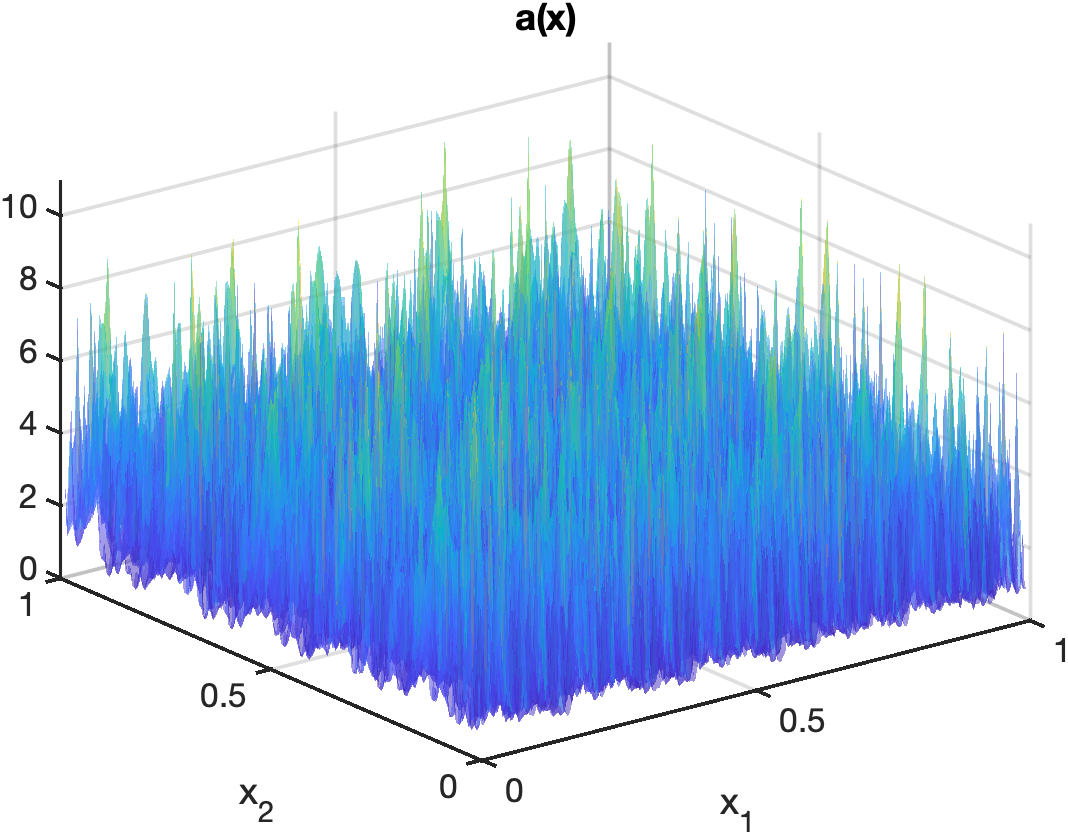}
    \includegraphics[width=6cm]{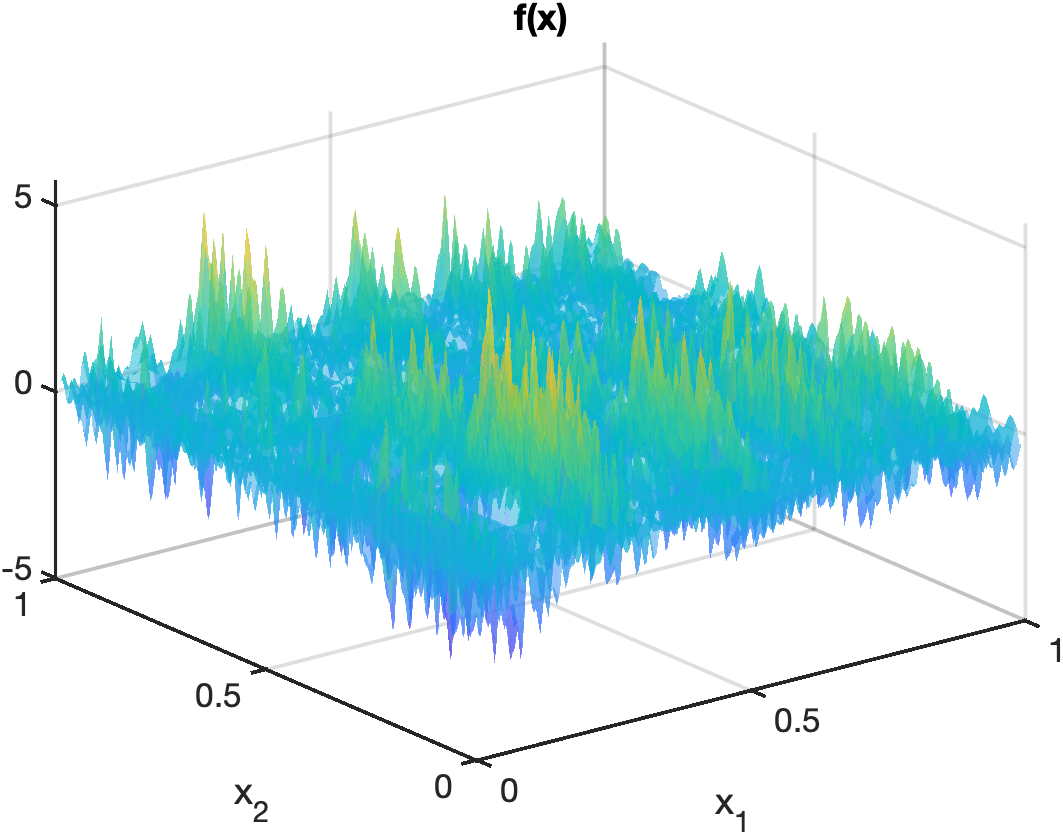}
    \includegraphics[width=6cm]{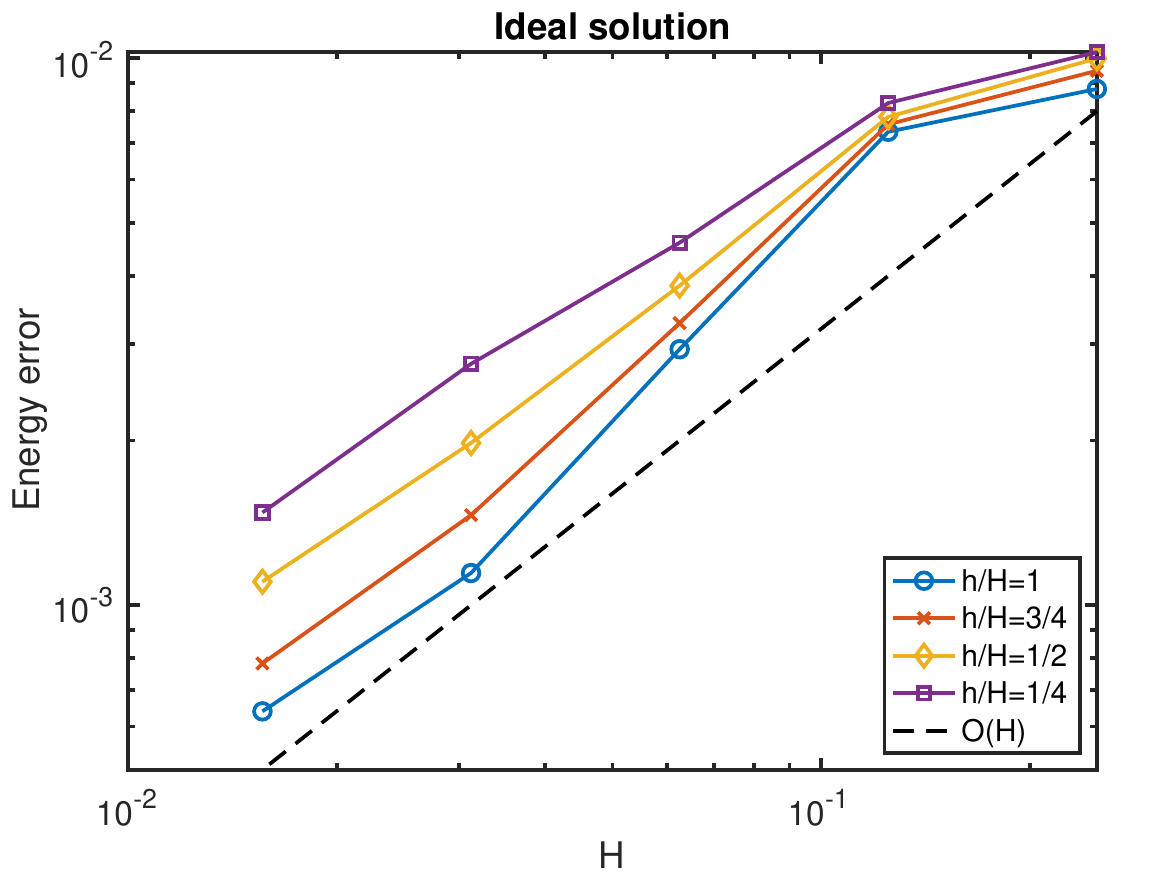}
    \includegraphics[width=6cm]{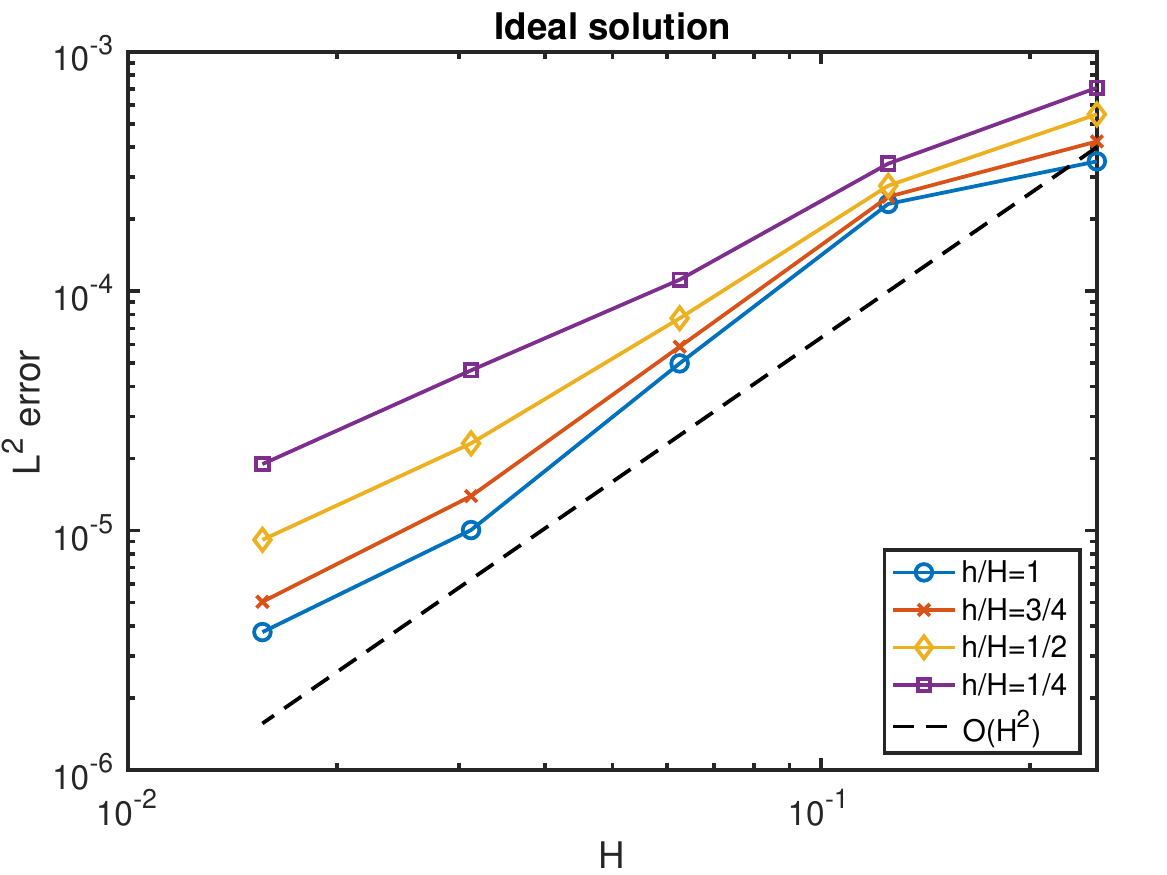}
    \caption{2D example, ideal solution. Upper left: $a(x)$; upper right: $f(x)$; lower left: energy error; lower right: $L^2$ error.}
    \label{fig: 2d, a f}
    \end{figure} 
    
In the upper part of Figure \ref{fig: 2d, a f}, we output $a(x)$ and a single realization of $f(x)$. The lower part depicts $e^{h,H,\infty}_{1}(a,u)$ and $e^{h,H,\infty}_{0}(a,u)$, for $H=2^{-2},2^{-3},...,2^{-6}$ and the subsampled ratio $h/H=1,3/4,1/2,1/4$. The grid size we use to discretize the operator is set to be $2^{-8}$. These two figures yield the same conclusions as those in the one dimensional case.
\subsection{Analysis: Ideal Solution}
\label{subsec: Analysis: Ideal Solution}
In this subsection, we move to the theoretical analysis of the ideal solution, to understand better of the above empirical observations. 

For this purpose, we use our earlier results in function approximation via subsampled data \cite{chen2019function}. Especially, Theorem 3.3 in \cite{chen2019function} implies the following result:

\begin{theorem}
\label{thm: err ideal sol}
For the ideal solution, it holds that
\begin{align}
    &e^{h,H,\infty}_{1}(a,u) \leq \frac{1}{\sqrt{a_{\min}}}C_1(d)H\rho_{2,d}(\frac{H}{h})\|\cL u\|_{L^2(\Omega)} \, ;\\
    &e^{h,H,\infty}_{0}(a,u) \leq \frac{1}{a_{\min}}C_1(d)^2H^2\left(\rho_{2,d}(\frac{H}{h})\right)^2\|\cL u\|_{L^2(\Omega)}\, ,
\end{align}
where, $C_1(d)$ is a constant that depends on the dimension $d$ only, and for $p,d\geq 1$, the function $\rho_{p,d}:\bR_{+}\to \bR_{+}$  is defined as:
\begin{equation}
    \label{eqn: sharp rate}
    \rho_{p,d}(t) =\left\{
    \begin{aligned}
    1, \quad &d < p \\
    (\log (1+t))^{\frac{d-1}{d}}, \quad &d=p  \\
    t^{\frac{d-p}{p}}, \quad  &d> p \, .
    \end{aligned}
    \right.
    \end{equation}
\end{theorem}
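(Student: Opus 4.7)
The plan is to interpret $u^{\text{ideal}}$ as the Galerkin/optimal recovery projection of $u$ onto $V := \operatorname{span}\{\psi_i^{h,H}\}_{i \in I}$ in the $H_a^1$ inner product, derive Galerkin orthogonality with respect to the measurement functionals, and then couple this with a subsampled Poincaré-Friedrichs inequality (the main technical input, imported from \cite{chen2019function}).

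\textbf{Step 1: Galerkin orthogonality.} From property (II), $[u^{\text{ideal}}, \phi_j^{h,H}] = \sum_i [u,\phi_i^{h,H}] [\psi_i^{h,H}, \phi_j^{h,H}] = [u,\phi_j^{h,H}]$, so $[u - u^{\text{ideal}}, \phi_j^{h,H}] = 0$ for all $j \in I$. From property (I), for any $v \in V$ one has $\mathcal{L} v \in \operatorname{span}\{\phi_j^{h,H}\}_{j \in I}$, hence integration by parts gives $(u - u^{\text{ideal}}, v)_{H_a^1(\Omega)} = [u - u^{\text{ideal}}, \mathcal{L} v] = 0$. In particular, $u^{\text{ideal}}$ is the $H_a^1$-orthogonal projection of $u$ onto $V$.

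\textbf{Step 2: Energy error via subsampled Poincaré.} The key estimate I would invoke is the subsampled Poincaré-Friedrichs inequality established as part of Theorem~3.3 in \cite{chen2019function}: for every $v \in H_0^1(\Omega)$ with $[v, \phi_j^{h,H}] = 0$ for all $j \in I$, one has
\begin{equation*}
\|v\|_{L^2(\Omega)} \leq C_1(d)\, H\, \rho_{2,d}\!\left(\tfrac{H}{h}\right) \|\nabla v\|_{L^2(\Omega)}.
\end{equation*}
Applied to $v := u - u^{\text{ideal}}$, this is the only place where the dimension-dependent rate $\rho_{2,d}$ enters. I then write $\|u - u^{\text{ideal}}\|_{H_a^1}^2 = (u - u^{\text{ideal}}, u)_{H_a^1} = [u - u^{\text{ideal}}, \mathcal{L} u]$, using Step 1 to drop $u^{\text{ideal}}$. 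Cauchy-Schwarz, the subsampled Poincaré bound, and $\|\nabla v\|_{L^2} \leq a_{\min}^{-1/2} \|v\|_{H_a^1}$ combine to give $\|u - u^{\text{ideal}}\|_{H_a^1} \leq a_{\min}^{-1/2} C_1(d) H \rho_{2,d}(H/h) \|\mathcal{L} u\|_{L^2}$, which is the first bound.

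\textbf{Step 3: $L^2$ error by Aubin-Nitsche duality.} Let $w \in H_0^1(\Omega)$ solve $\mathcal{L} w = u - u^{\text{ideal}}$ and let $w^{\text{ideal}}$ be its corresponding ideal solution, which lies in $V$. Then
\begin{equation*}
\|u - u^{\text{ideal}}\|_{L^2}^2 = [u - u^{\text{ideal}}, \mathcal{L} w] = (u - u^{\text{ideal}}, w)_{H_a^1} = (u - u^{\text{ideal}}, w - w^{\text{ideal}})_{H_a^1},
\end{equation*}
where the last equality uses Galerkin orthogonality again. Cauchy-Schwarz and two applications of Step 2 (once to $u$, once to $w$, using $\|\mathcal{L} w\|_{L^2} = \|u - u^{\text{ideal}}\|_{L^2}$) yield the second bound after dividing by $\|u - u^{\text{ideal}}\|_{L^2}$.

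\textbf{Main obstacle.} The analytic heart of the theorem is entirely in the subsampled Poincaré inequality with the sharp dimension-dependent rate $\rho_{2,d}(H/h)$ — this is the result that distinguishes $d<2$, $d=2$, and $d>2$, and it is what forces the degradation when $h \ll H$ in higher dimensions. Since this is cited from \cite{chen2019function}, I would not re-prove it here; the rest of the argument is a standard projection/duality argument once Galerkin orthogonality with respect to $\phi_j^{h,H}$ is in place.
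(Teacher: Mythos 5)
Your proof is correct, and it is essentially the argument the paper relies on: the paper simply cites Theorem 3.3 of \cite{chen2019function} for this result, and your reconstruction (Galerkin/energy orthogonality with respect to the $\phi_j^{h,H}$, the subsampled Poincar\'e inequality as the sole source of the rate $\rho_{2,d}(H/h)$, and Aubin--Nitsche duality for the $L^2$ bound) is exactly the mechanism the paper itself deploys for the analogous weighted estimates in Theorem \ref{thm: weighted estimates}. The only remark worth making is that the duality step is slightly more than necessary here, since $[u-u^{\text{ideal}},\phi_j^{h,H}]=0$ lets you pass from the $L^2$ error to the energy error directly via the same subsampled Poincar\'e inequality, yielding the identical constant.
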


In Theorem \ref{thm: err ideal sol}, we get the upper bound of $e^{h,H,\infty}_{1}(a,u)$ and $e^{h,H,\infty}_{0}(a,u)$. The dependence of this upper bound on $h$ is determined by the function $\rho_{2,d}$. Note that it is a non-decreasing function, so as $h$ decreases, for a fixed $H$, the ratio $H/h$ increases, and the upper bound will also increase. One exception is when $d=1$, the upper bound remains constant when $h$ changes, and it is still finite even when $h$ approaches $0$. This phenomenon is in sharp contrast with the case $d\geq 2$, where as $h \to 0$, the upper bound blows up to infinity.

The above theoretical implications match what we have observed in the experiments -- reducing $h$ leads to a decrease of accuracy, both in $d=1$ and $d=2$; moreover, the deterioration of accuracy is more severe in $d=2$ than $d=1$.

Therefore, if one is adopting the ideal solution, without considering computational costs, then we would recommend choosing $h=H$, which achieves the best of both worlds with a theoretical guarantee and practical performance.

\begin{remark}
 Applying the above recommendation ($h=H$) is straightforward in the context of numerical upscaling -- we can choose the suitable upscaled coarse variables. Nevertheless, for scattered data approximation, the data acquisition step also matters. Our analysis suggests that for the sake of accuracy (in the case there is no burden of computational costs), it could be a good idea to make the lengthscale of the coarse data larger; this provides guidance for data collection in such a scenario.
\end{remark}
\subsection{Experiments: Localized Solution} Solving the ideal solution can be computationally expensive due to the global optimization problem \eqref{eqn: optimization def basis}. This is also why we stop at $H=2^{-6}$ and do not decrease $H$ further in the previous 2D experiments. For better practical algorithms, in this subsection, we move to the localized solution. We start with the numerical experiments for 1D and 2D, followed by theoretical analysis. In these experiments, we use the same functions $a(x)$ and $f(x)$ as in the ideal case.

In the localized scenario, the Galerkin solution in numerical upscaling and the recovery solution in scattered data approximation are different. Thus, we will compute them separately and compare the results. More precisely, for the Galerkin solution, we use the localized basis functions in the Galerkin framework to solve the PDE; for the recovery solution, it is simpler -- once the basis functions are computed, we readily get the recovery solution by using the available subsampled data and the formula \eqref{eqn: loc sol}. For both cases, the ground truth solution $u$ is given as a solution to a PDE.

\subsubsection{One Dimensional Example} We consider the 1D model in Subsection \ref{subsec: ideal 1d experiments}. We compute the Galerkin errors $\tilde{e}^{h,H,l}_{1}(a,u)$ and  $\tilde{e}^{h,H,l}_{0}(a,u)$ and the recovery errors $e^{h,H,l}_{1}(a,u)$ and $e^{h,H,l}_{0}(a,u)$, for $H=2^{-2},2^{-3},...,2^{-7}$, $h/H=1,1/2,1/4,1/8$ and $l=2,4$. The grid size we use to discretize the operator is set to be $2^{-11}$.

In Figure \ref{fig: 1d, localized l=2}, the oversampling parameter $l=2$. The upper part depicts the energy and $L^2$ errors of the Galerkin solution, while the lower part corresponds to that of the recovery solution. From the figure, we observe the following facts:
\begin{itemize}
    \item Due to localization, the error line of $h/H=1,1/2,1/4$ finally turns up as we make $H$ very small, deviating from what we have observed in the ideal solution. This implies the localization error matters a lot.
    \item Among the four choices, the case $h/H=1/8$ that corresponds to the smallest $h$, behaves the best for small $H$. It appears that decreasing $h$ may suppress the localization error to certain extent.
    \item The $L^2$ error of the recovery solution is more stable and accurate compared to the Galerkin solution, when $H$ is small. Especially, there is no obvious blow-up as $H$ becomes small.
\end{itemize}
\begin{figure}[!htb]
    \centering
    \includegraphics[width=6cm]{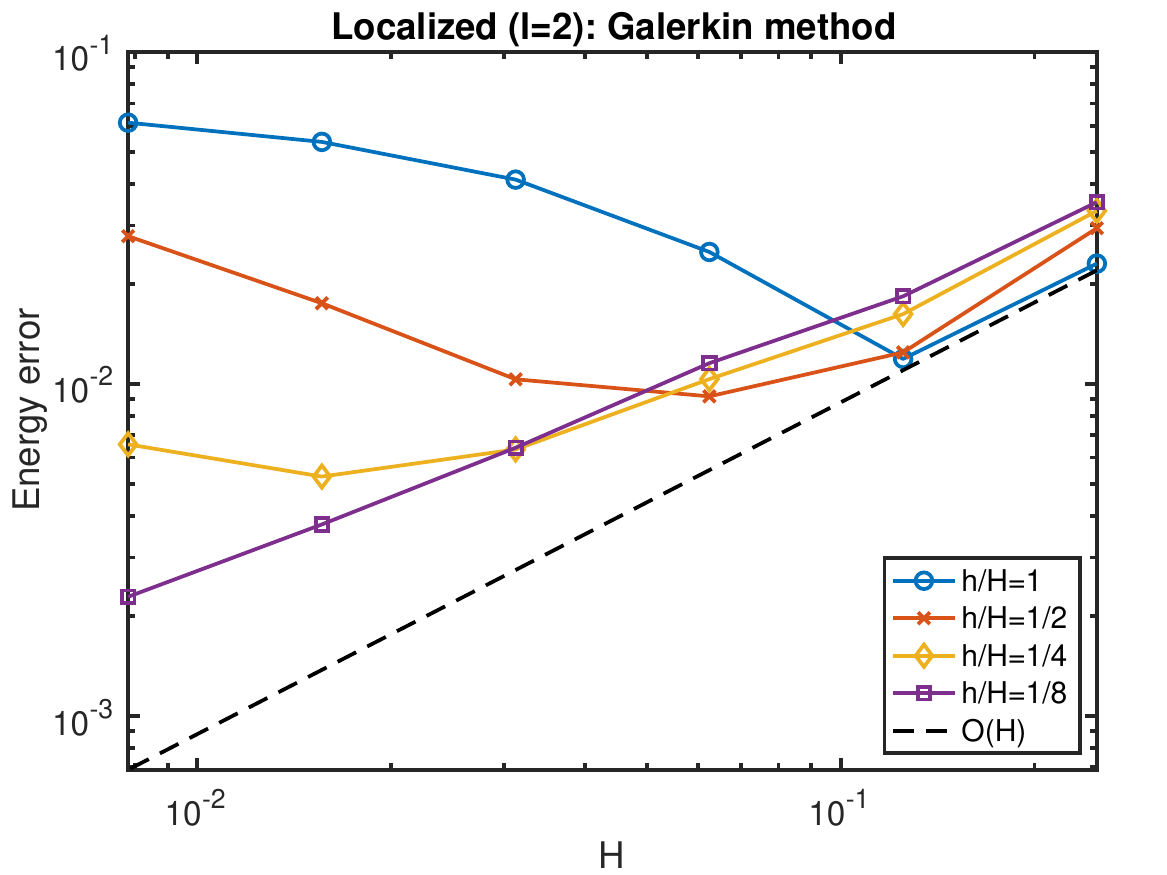}
    \includegraphics[width=6cm]{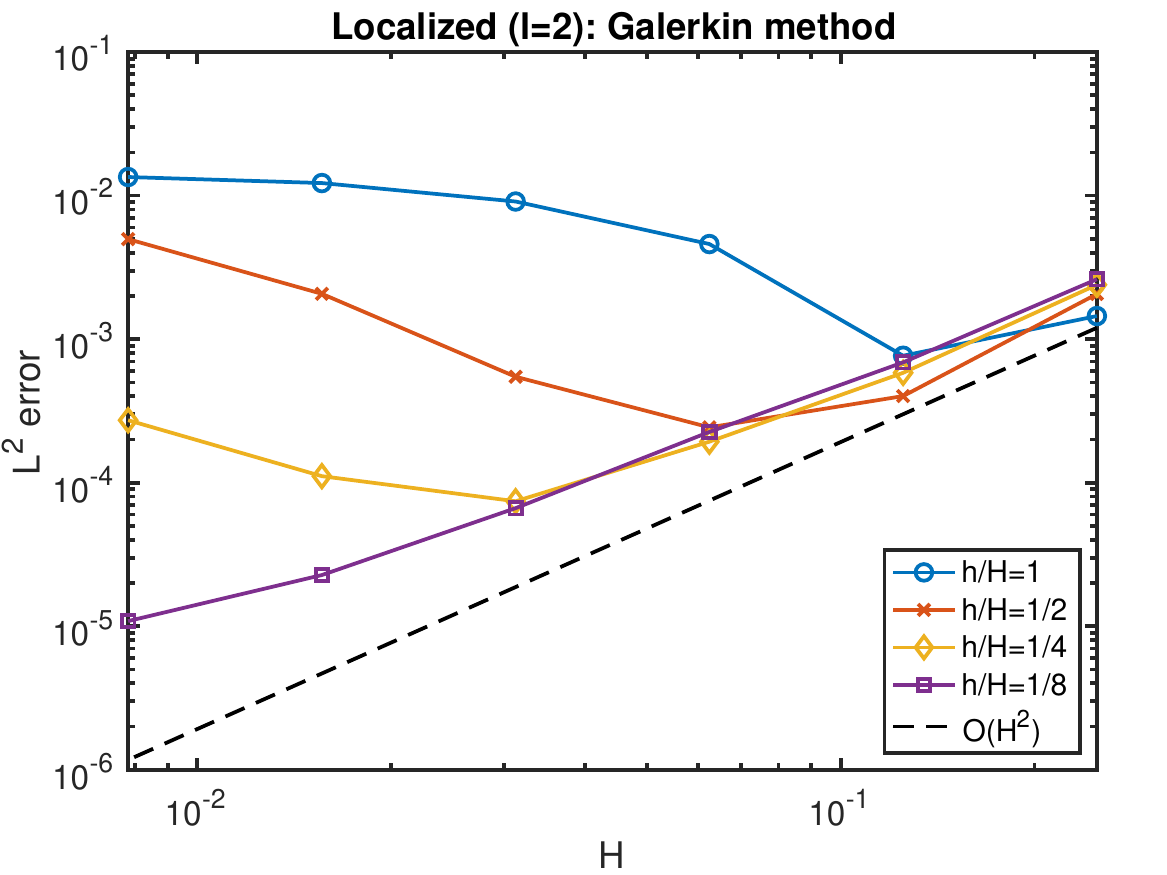}
    \includegraphics[width=6cm]{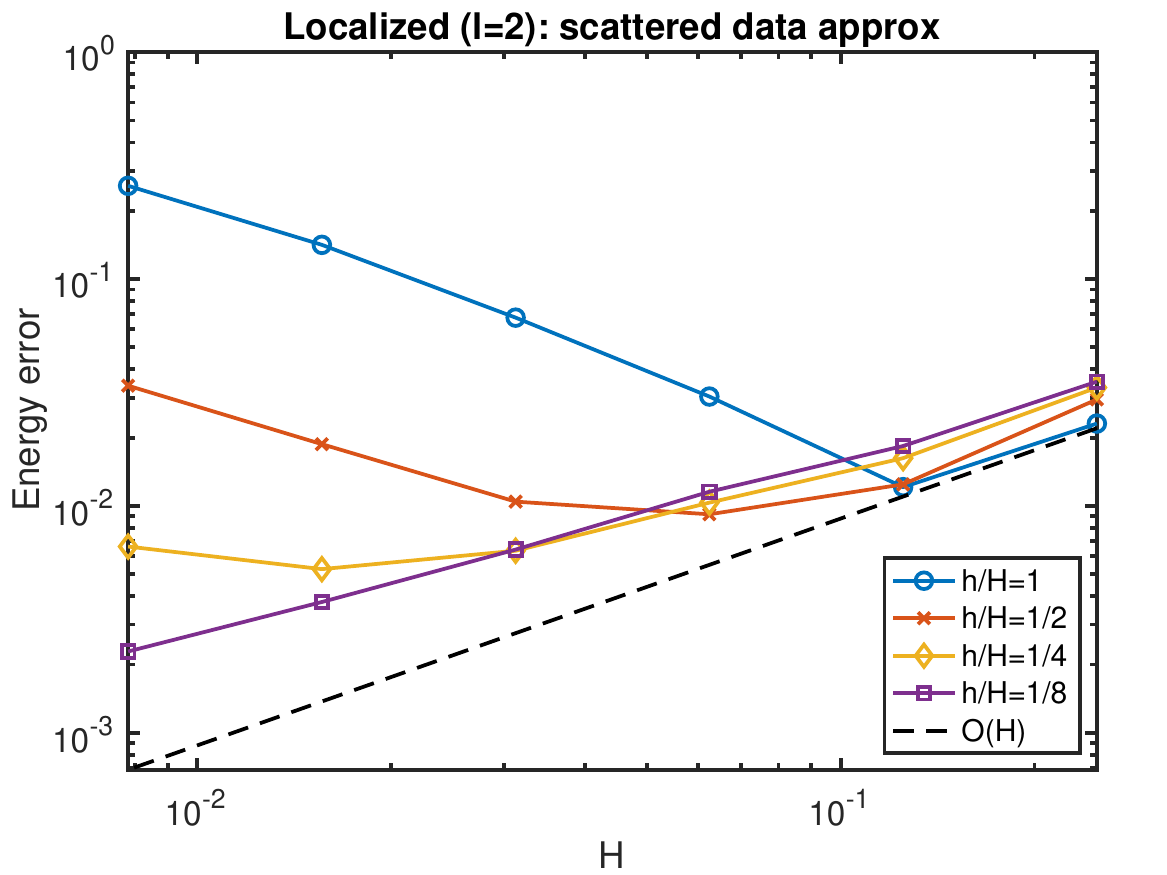}
    \includegraphics[width=6cm]{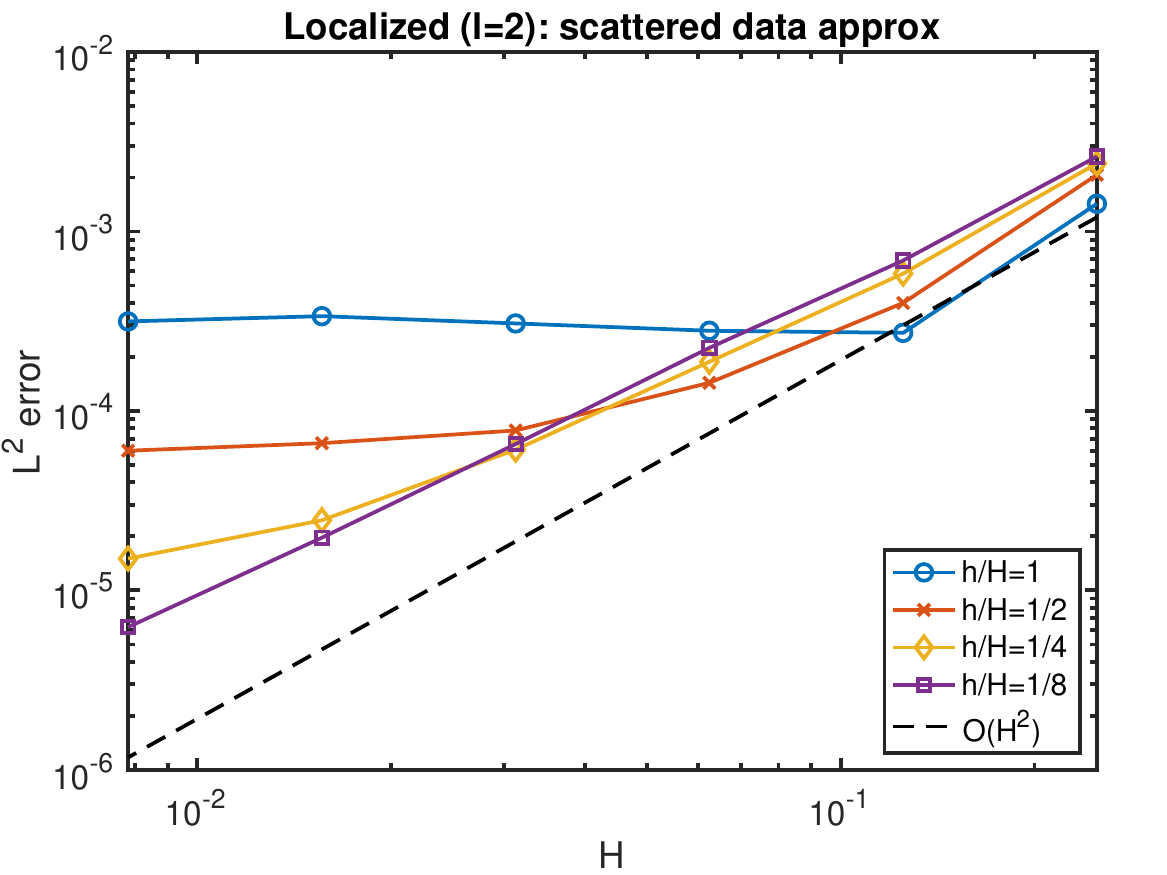}
    \caption{1D example, localized solution $l=2$. Upper left: $\tilde{e}^{h,H,l}_{1}(a,u)$; upper right: $\tilde{e}^{h,H,l}_{0}(a,u)$; lower left: $e^{h,H,l}_{1}(a,u)$; lower right: $e^{h,H,l}_{0}(a,u)$.}
    \label{fig: 1d, localized l=2}
    \end{figure} 
Next, we increase the oversampling parameter to $l=4$, and output the same set of observables in Figure \ref{fig: 1d, localized l=4}. Now, only the case $h/H=1$ leads to a turning up of the error line, while the other three cases lead to similar error lines as the ideal solution. The best choice among the four becomes $h/H=1/2$. Thus, as $l$ increases, the localized solution is approaching the ideal one, and choosing a larger $h$ would be good.
\begin{figure}[!htb]
    \centering
    \includegraphics[width=6cm]{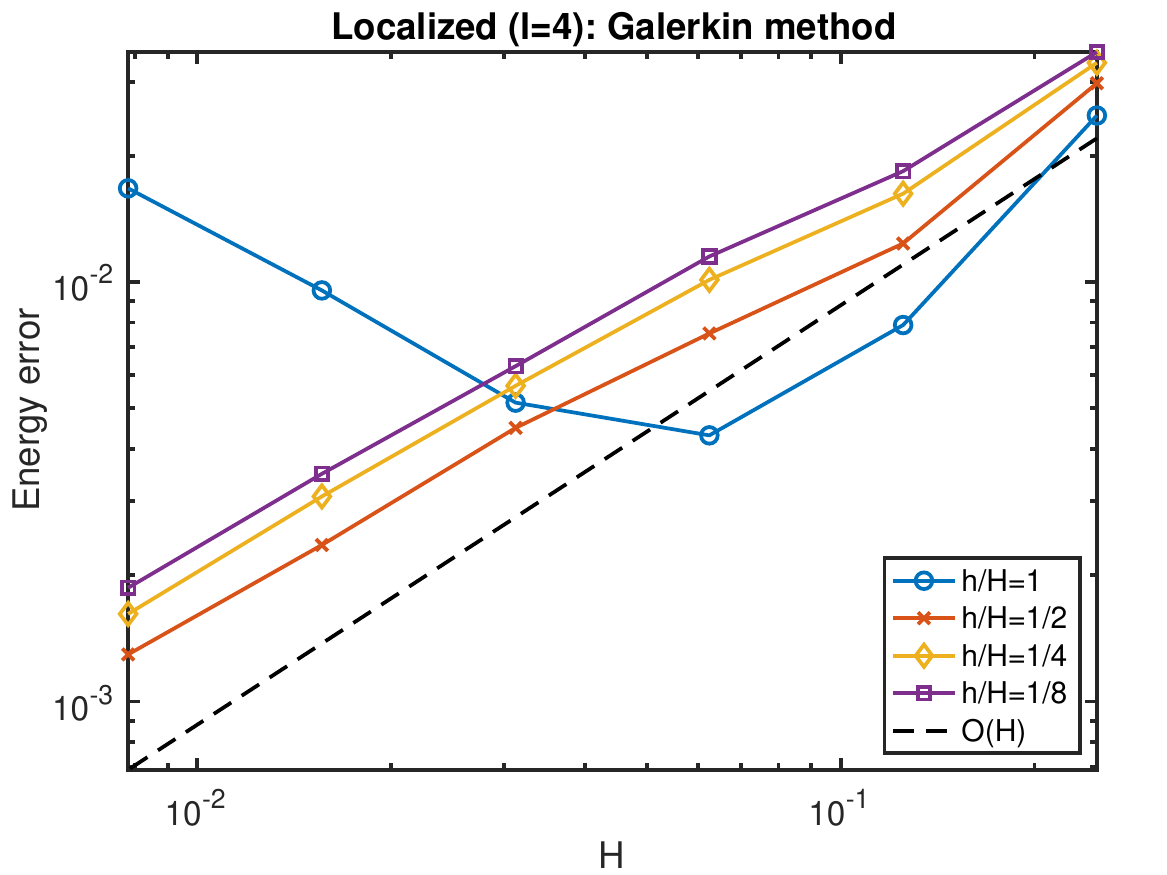}
    \includegraphics[width=6cm]{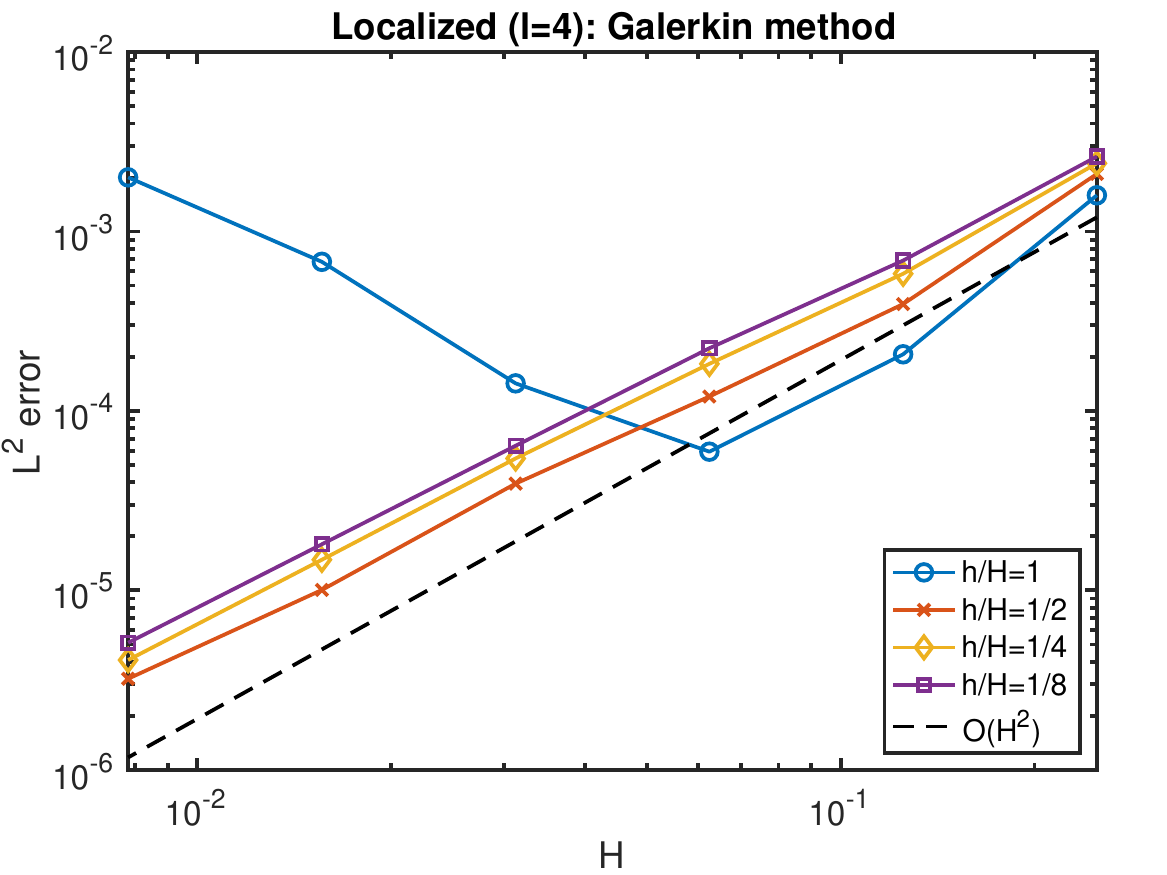}
    \includegraphics[width=6cm]{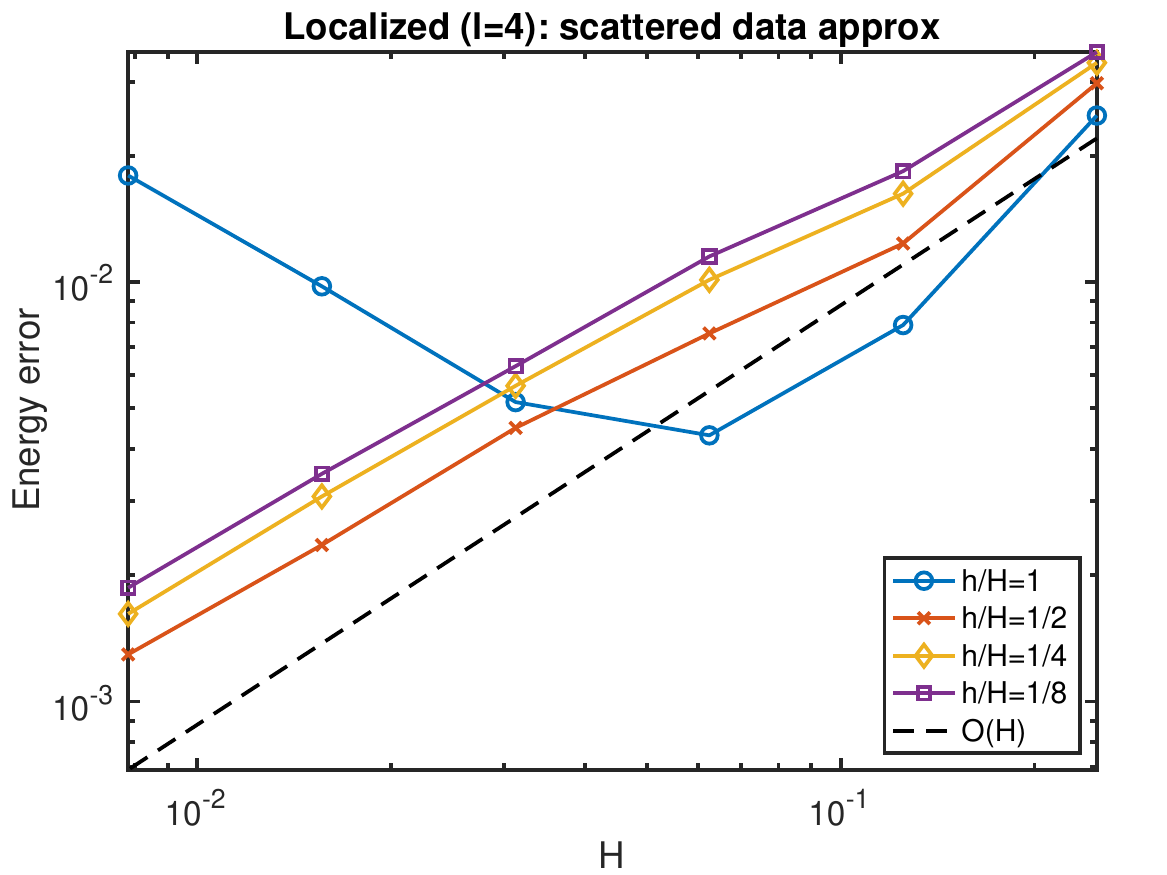}
    \includegraphics[width=6cm]{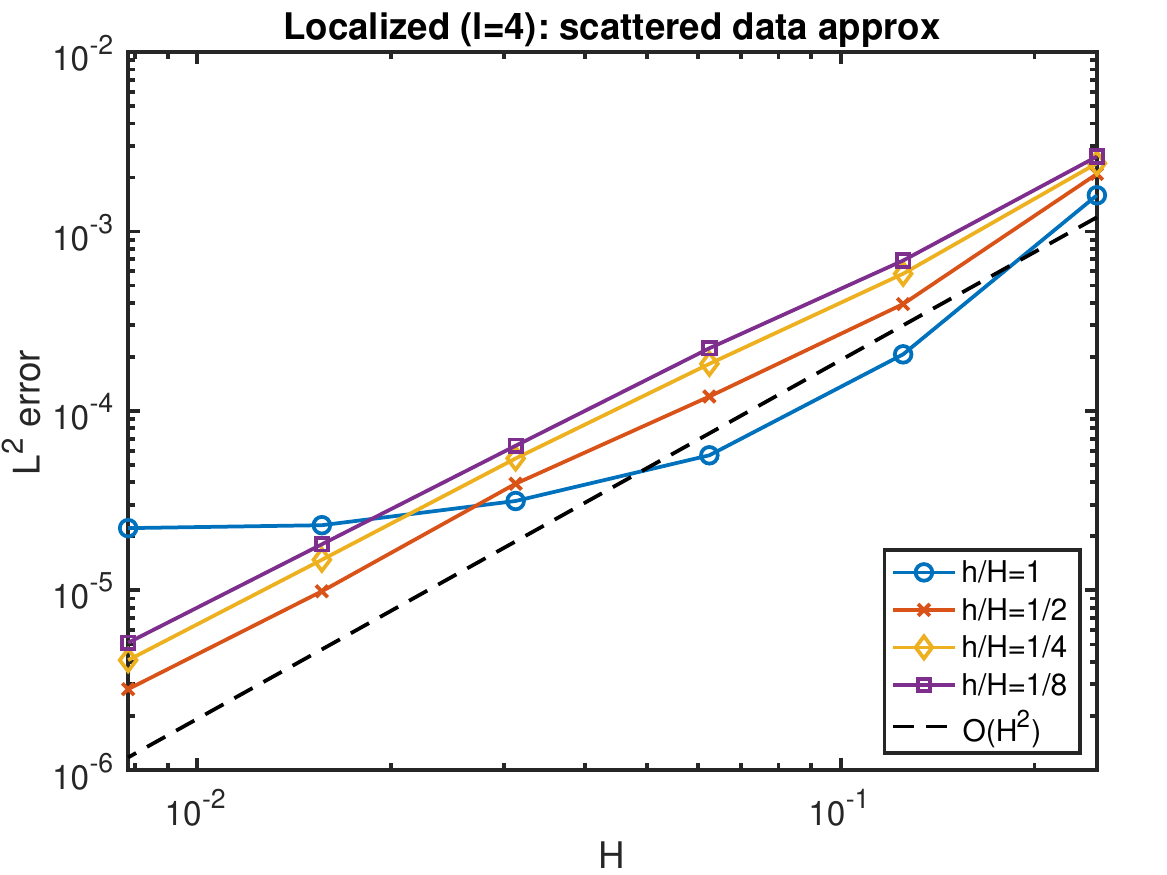}
    \caption{1D example, localized solution $l=4$. Upper left: $\tilde{e}^{h,H,l}_{1}(a,u)$; upper right: $\tilde{e}^{h,H,l}_{0}(a,u)$; lower left: $e^{h,H,l}_{1}(a,u)$; lower right: $e^{h,H,l}_{0}(a,u)$.}
    \label{fig: 1d, localized l=4}
    \end{figure} 
\subsubsection{Two Dimensional Example} In this subsection, we move to a two dimensional example that corresponds to the the ideal case in Subsection \ref{subsec: ideal 2d experiments}. As before, we compute the Galerkin errors $\tilde{e}^{h,H,l}_{1}(a,u)$ and $\tilde{e}^{h,H,l}_{0}(a,u)$ and the recovery errors $e^{h,H,l}_{1}(a,u)$ and $e^{h,H,l}_{0}(a,u)$, for $H=2^{-2},2^{-3},...,2^{-8}$, $h/H=1,3/4,1/2,1/4$ and $l=2,4$. The grid size we use to discretize the operator is set to be $2^{-10}$.

We start with $l=2$, in Figure \ref{fig: 2d, localized l=2}. Our observations are as follows:
\begin{itemize}
    \item All the error lines deviate from the desired $O(H)$ or $O(H^2)$ line to some extent, and among the four choices, the ratio $h/H=1/2$ performs the best when $H$ is small. 
    \item Compared to the 1D example, the localization errors in 2D are larger, since the deviation from the desired $O(H)$ or $O(H^2)$ line is more apparent.
    \item The error line exhibits a turning up behavior even for very small $h/H=1/4$. That means in the 2D case, small $h$ can also lead to large overall errors. This observation indeed matches our theory for the ideal solution, as $\rho_{2,d}(H/h)$ in Theorem \ref{thm: err ideal sol} will blow up as $h\to 0$, when $d=2$.
    \item When $H$ is small, the $L^2$ error of the recovery solution in the scattered data approximation is more accurate than the Galerkin solution in numerical upscaling. This phenomenon has also been observed in the 1D example.
\end{itemize}
Then, we increase the oversampling parameter to $l=4$. The results are output in Figure \ref{fig: 2d, localized l=4}. We observe a better accuracy and more stable behavior of the error lines compared to $l=2$. Now the best among the four ratios becomes $h/H=3/4$. Moreover, the relative behaviors of the three cases $h/H=3/4,1/2,1/4$ are very similar to that in the ideal solution, indicating that when $l=4$, the localization error may be small compared to the approximation error of the ideal solution.
\begin{figure}[!htb]
    \centering
    \includegraphics[width=6cm]{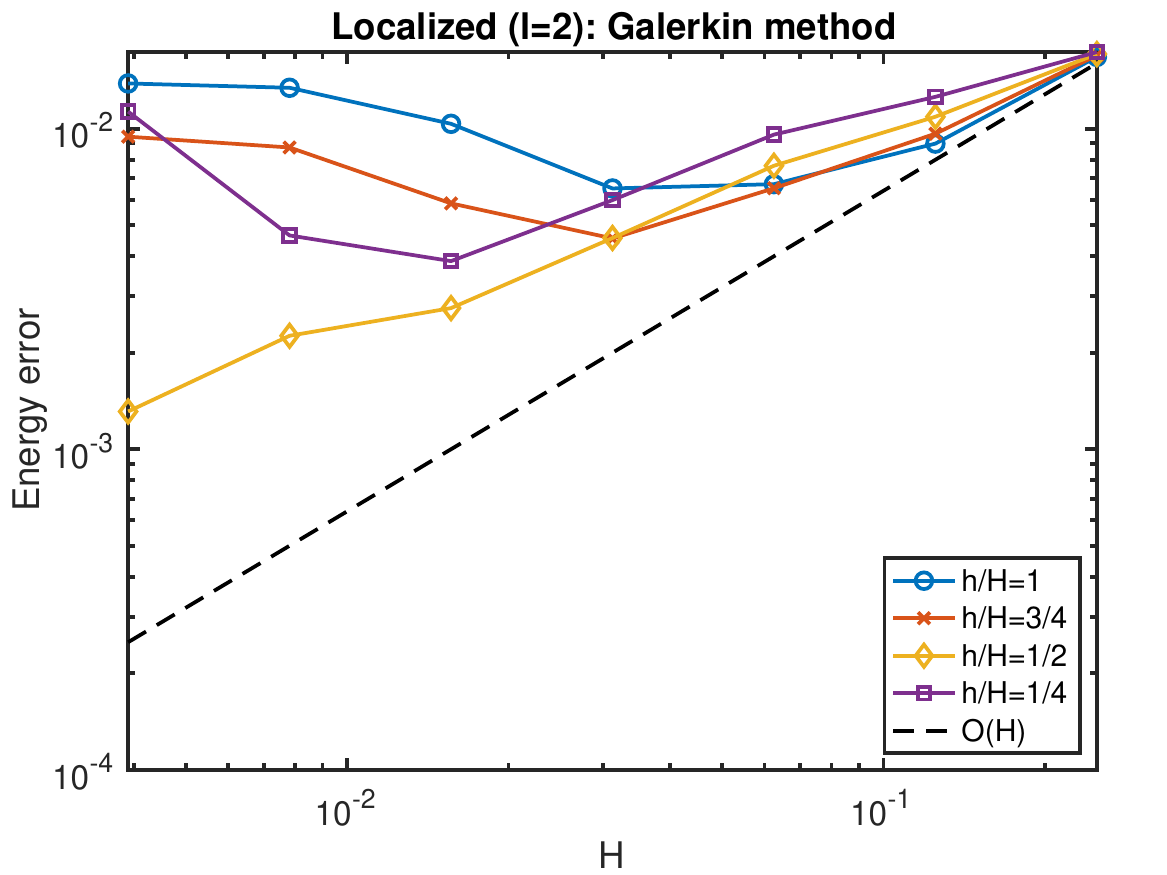}
    \includegraphics[width=6cm]{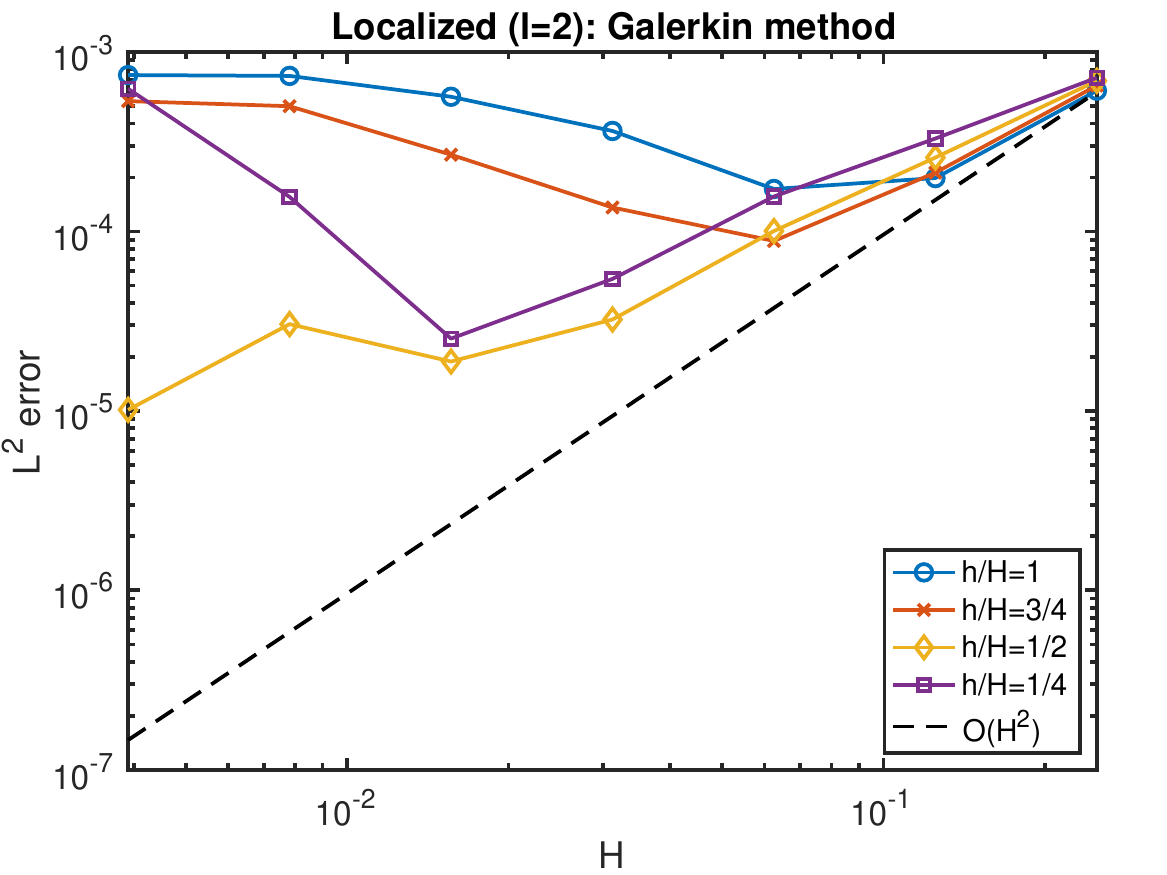}
    \includegraphics[width=6cm]{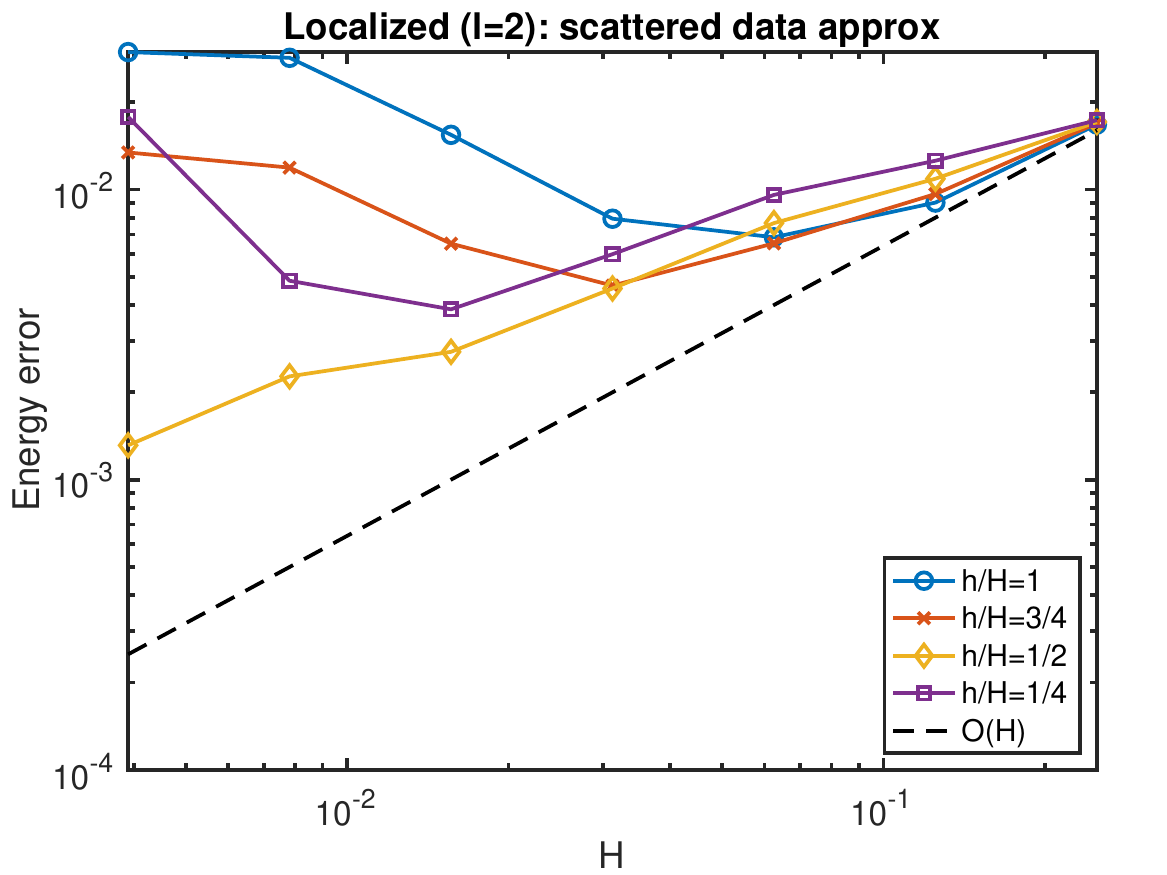}
    \includegraphics[width=6cm]{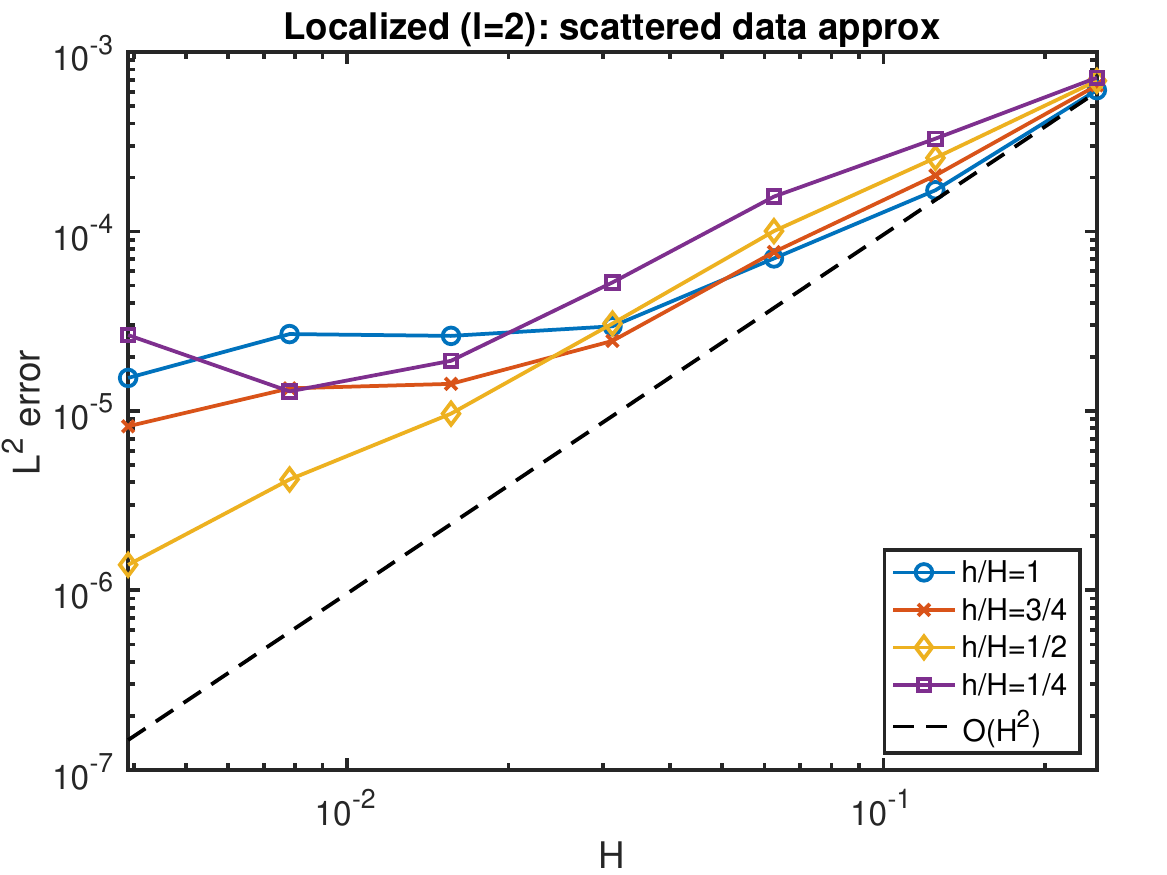}
    \caption{2D example, localized solution $l=2$. Upper left: $\tilde{e}^{h,H,l}_{1}(a,u)$; upper right: $\tilde{e}^{h,H,l}_{0}(a,u)$; lower left: $e^{h,H,l}_{1}(a,u)$; lower right: $e^{h,H,l}_{0}(a,u)$.}
    \label{fig: 2d, localized l=2}
    \end{figure} 
    
\begin{figure}[!htb]
    \centering
    \includegraphics[width=6cm]{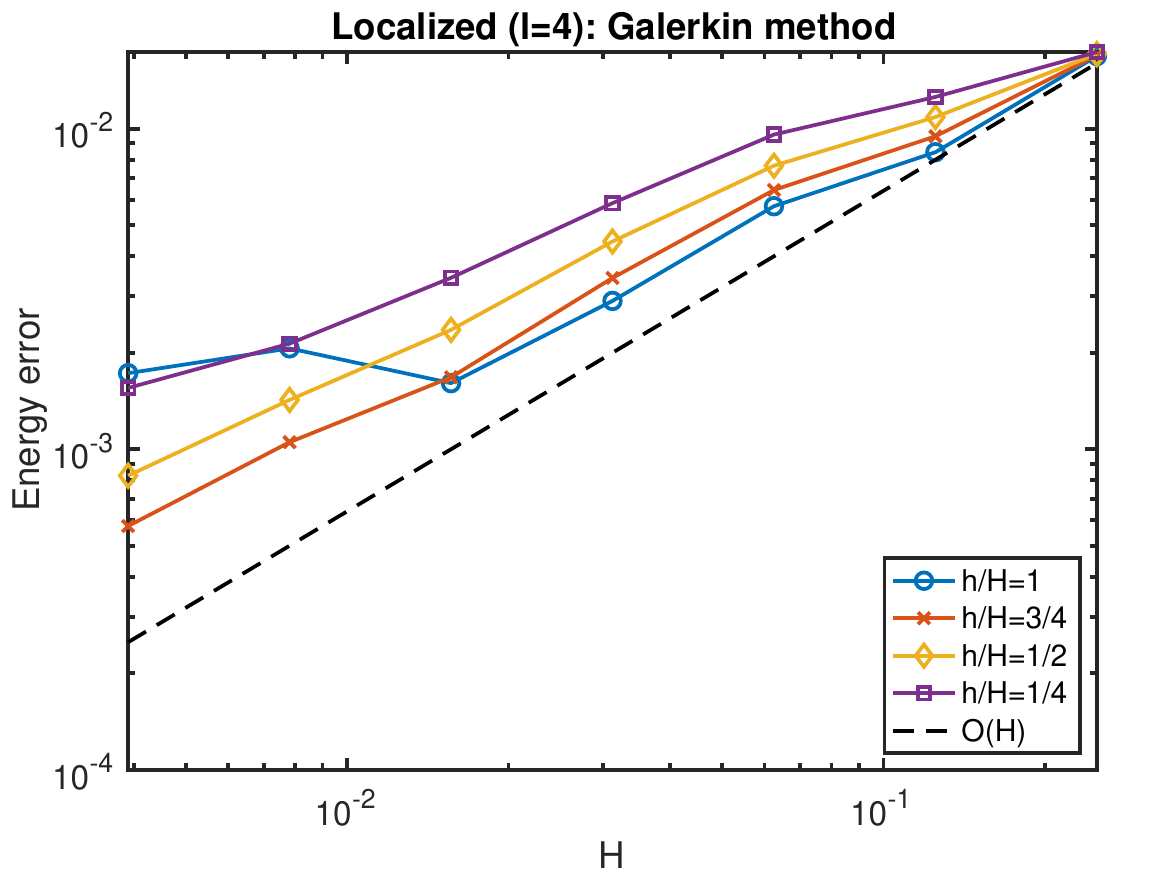}
    \includegraphics[width=6cm]{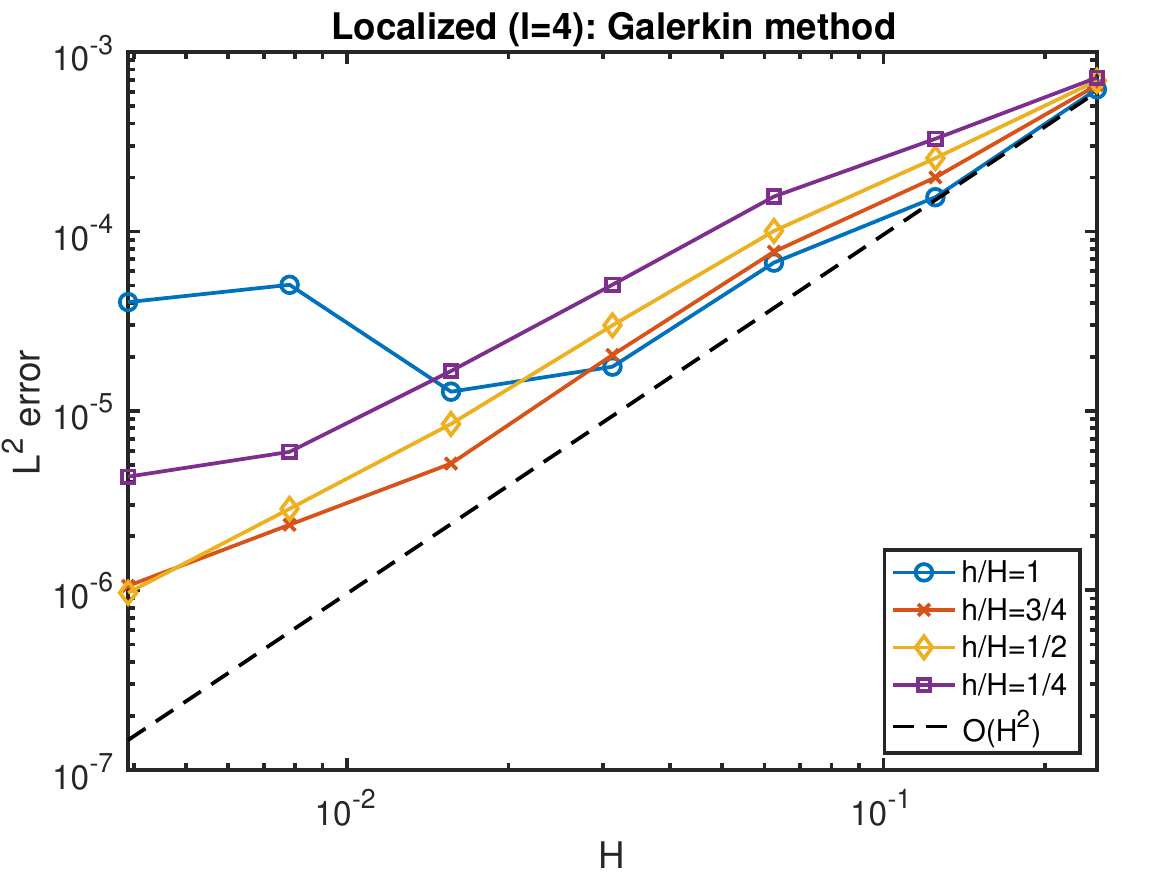}
    \includegraphics[width=6cm]{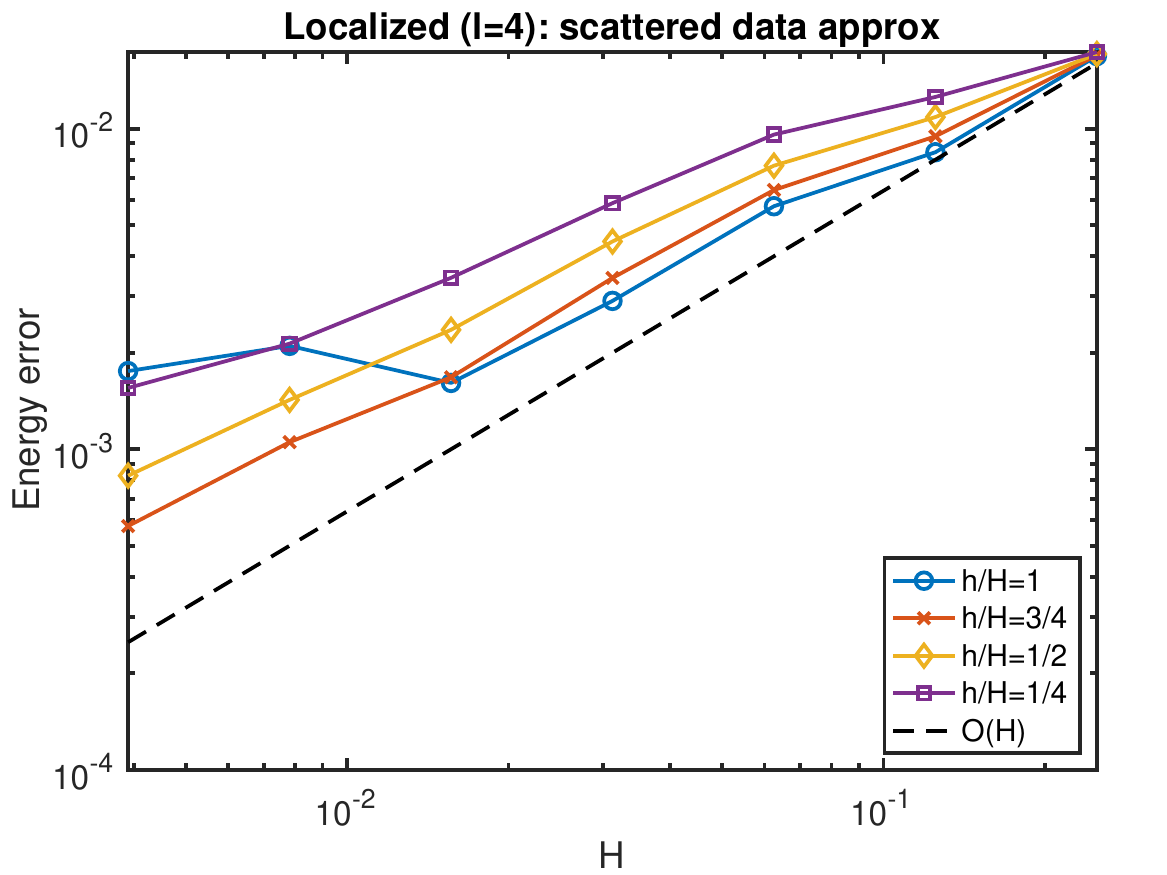}
    \includegraphics[width=6cm]{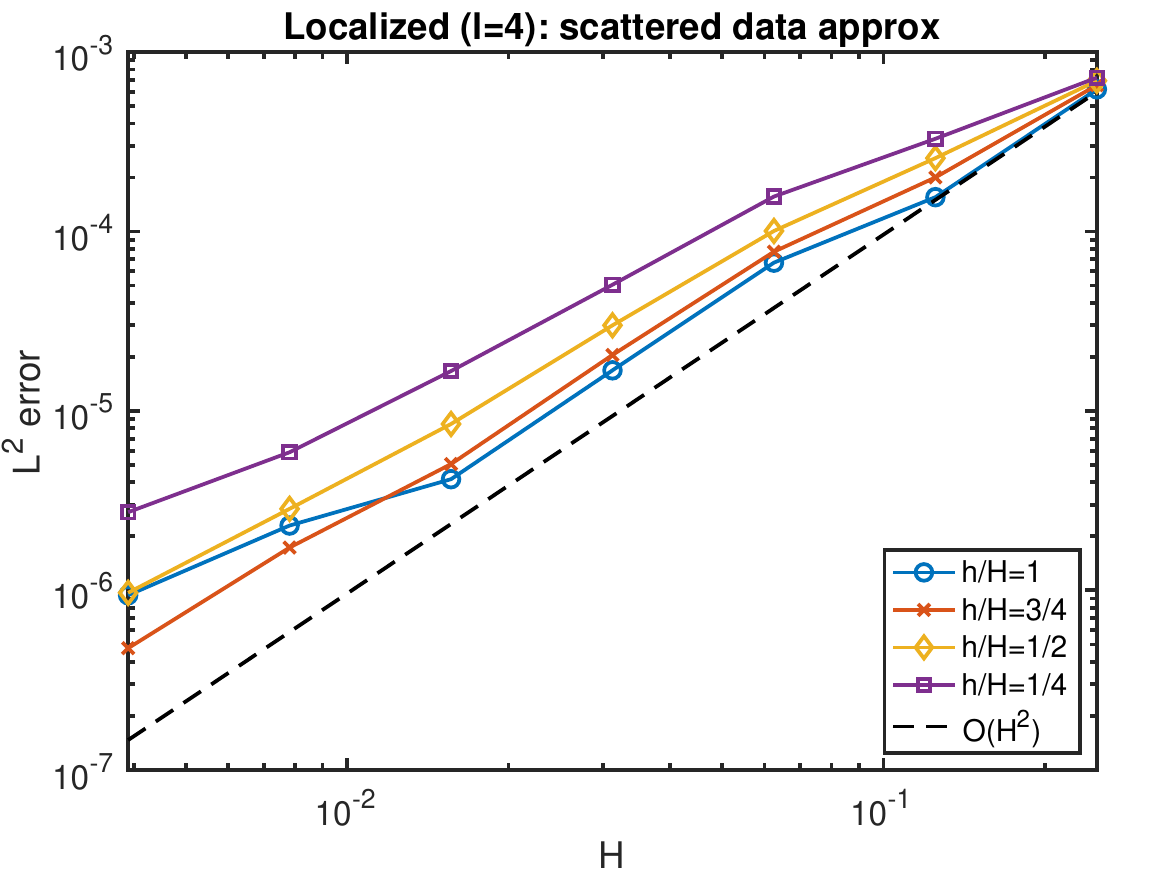}
    \caption{2D example, localized solution $l=4$. Upper left: $\tilde{e}^{h,H,l}_{1}(a,u)$; upper right: $\tilde{e}^{h,H,l}_{0}(a,u)$; lower left: $e^{h,H,l}_{1}(a,u)$; lower right: $e^{h,H,l}_{0}(a,u)$.}
    \label{fig: 2d, localized l=4}
    \end{figure}

\subsection{Analysis: Localized Solution}
\label{subsec: Analysis: Localized Solution}
In this subsection, we provide some theoretical analysis for the localized solution. To begin with, we summarize the main observations in the numerical experiments that we want to understand more deeply in our theoretical study..
\begin{enumerate}
    \item The error lines of the localized solution, $e_1^{h,H,l}(a,u), \tilde{e}_1^{h,H,l}(a,u)$ and also $\tilde{e}_0^{h,H,l}(a,u)$, turn up when $H$ is small, if $l$ is fixed;
    \item The localization error appears to become smaller as $h$ decreases -- for the overall error of the localized solution, there seems to be a competition between the approximation error of the ideal solution (which increases as $h$ decreases), and the localization error (which decreases as $h$ decreases). The strength of the competition depends on the oversampling parameter $l$;
    \item The $L^2$ error of the recovery solution is smaller compared to that of the Galerkin solution, i.e., $\tilde{e}_0^{h,H,l}(a,u)$ appears to be larger than $e_0^{h,H,l}(a,u)$, and for the latter, it does not blows up as $H$ becomes small.
\end{enumerate}
We will provide reasonable theoretical explanation of these observations. First, we introduce several useful notations.
\subsubsection{Notations}
\label{subsec: notation}
For any function $v \in H_0^1(\Omega)$, we write 
\begin{equation}
    \sfP^{h,H,l}v=\sum_{i\in I}~ [v,\phi_i^{h,H}]\psi^{h,H,l}_i\, .
\end{equation}
Moreover, we use the convention $\sfP^{h,H}v=\sum_{i\in I}~ [v,\phi_i^{h,H}]\psi^{h,H}_i$. These definitions lead to the relation $\sfP^{h,H,l}\psi_i^{h,H}=\psi_i^{h,H,l}$, which connects the ideal and localized basis functions.

Since we are mainly interested in how the error depends on $h,H,l$ and $u$, we use $A\lesssim B$ (resp. $A\gtrsim B$) to denote the condition $A \leq CB$ (resp. $A \geq CB$) for some constant $C$ independent of $h,H,l$ and $u$. If we have both $A\lesssim B$ and $A\gtrsim B$, then we will write $A\simeq B$. We use $\left<\cdot,\cdot\right>_a$ to denote the $a$-weighted inner product in $H_0^1(\Omega)$, i.e., $\left<u,v\right>_a:=\int_{\Omega} a \nabla u \cdot \nabla v$.
\subsubsection{Analysis}
To analyze the error of localized solutions, we first use the triangle inequality:
\begin{equation}
\label{eqn: err loc decompose}
\begin{aligned}
    e_1^{h,H,l}(a,u)&=\|u-\sfP^{h,H,l}u\|_{H_a^1(\Omega)}\\
    &\leq \|u-\sfP^{h,H}u\|_{H_a^1(\Omega)} + \|\sfP^{h,H}u-\sfP^{h,H,l}u\|_{H_a^1(\Omega)}\\
    &\lesssim H\rho_{2,d}(\frac{H}{h})\|\cL u\|_{L^2(\Omega)}+\|\sfP^{h,H}u-\sfP^{h,H,l}u\|_{H_a^1(\Omega)}\, ,
\end{aligned}
\end{equation}
where in the last inequality, we have used the estimate for the ideal solution. The second part $\|\sfP^{h,H}u-\sfP^{h,H,l}u\|_{H_a^1(\Omega)}$ is the localization error. Our main goal is to estimate this part of error. For this purpose, we have Theorem \ref{thm: error loc solution} below.
\begin{theorem}
\label{thm: error loc solution}
The following results hold:
\begin{enumerate}
    \item (Inverse estimate) For any $v \in \operatorname{span}~\{\psi^{h,H}_i\}_{i\in I}$ and in each $\omega_j^{h,H}$, $j \in I$, we have the estimate:
		\[\|\nabla \cdot (a \nabla v)\|_{L^2(\omega_j^{h,H})}\leq \frac{\sqrt{a_{\max}}C_2(d)}{h}\|v\|_{H_a^1(\omega_j^{h,H})}\, , \]
		where $C_2(d)$ is a constant that depends on $d$ only.
    \item (Exponential decay) For each $i\in I$ and $k \in \bN$, we have
    \begin{equation}
    \label{eqn: exp decay}
        \|\psi_i^{h,H}\|^2_{H_a^1(\Omega \backslash \rN^k(\omega_i^H))}\leq \left(\beta(h,H)\right)^k\|\psi_i^{h,H}\|_{H_a^1(\Omega)}^2
    \end{equation}
    where 
		\begin{equation}
		\label{def: beta(h,H)}
		    \beta(h,H)=\frac{C_0(d)\sqrt{\frac{a_{\max}}{a_{\min}}}\left(C_1(d)\rho_{2,d}(\frac{H}{h})+C_1(d)C_2(d)\frac{h}{H}\right)}{C_0(d)\sqrt{\frac{a_{\max}}{a_{\min}}}\left(C_1(d)\rho_{2,d}(\frac{H}{h})+C_1(d)C_2(d)\frac{h}{H}\right)+1}\, .
		\end{equation}
		Here, $C_0(d)$ is a universal constant dependent on $d$,
		$C_1(d)$ is the constant in Theorem \ref{thm: err ideal sol} while $C_2(d)$ is the constant in the inverse estimate.
	\item (Norm estimate) Suppose for each $i \in I$, $\phi_i^{h,H}$ is $L^1$ normalized in the sense that $\|\phi_i^{h,H}\|_{L^1(\omega_i^{h,H})}=1$, then the following estimate holds:
	\begin{equation}
	    \|\psi_{i}^{h,H}\|_{H_a^1(\Omega)}\lesssim \frac{1}{\rho_{2,d}(\frac{H}{h})} H^{d/2-1} \, .
	\end{equation}
	\item (Localization error per basis function) For each $i \in I$, it holds that
	\begin{equation}
	\begin{aligned}
	   &\|\psi_i^{h,H}-\psi_i^{h,H,l}\|_{H_a^1(\Omega)}\\ \lesssim &~H^{d/2-1} \min\left\{\left(\beta(h,H)\right)^{l/2}, \frac{1}{\rho_{2,d}(\frac{H}{h})}\right\}\, .
	\end{aligned}
	\end{equation}
	\item (Overall localization error) The following error estimate holds:
	\begin{equation}
	\label{eqn: Overall localization error}
	\begin{aligned}
	    &\|\sfP^{h,H}u-\sfP^{h,H,l}u\|_{H_a^1(\Omega)}\\
	    \lesssim &\min\left\{\left(\beta(h,H)\right)^{l/2}\rho_{2,d}(\frac{H}{h}),1\right\}\times \min\left\{\frac{l^{d/2}}{H},\frac{1}{H^{d/2+1}\rho_{2,d}(\frac{H}{h})}\right\}\|u\|_{L^{\infty}(\Omega)}\, .
	\end{aligned}
	\end{equation}
	\item (Overall recovery error) Suppose $d\leq 3$. For the energy recovery error, we have
	\begin{equation}
	\label{eqn: recovery e1}
	\begin{aligned}
	    e_1^{h,H,l}(a,u) \lesssim \bigg(H\rho_{2,d}(\frac{H}{h})
	    +&\min\left\{\left(\beta(h,H)\right)^{l/2}\rho_{2,d}(\frac{H}{h}),1\right\}\\&\times \min\left\{\frac{l^{d/2}}{H},\frac{1}{H^{d/2+1}\rho_{2,d}(\frac{H}{h})}\right\}\bigg)\|\cL u\|_{L^2(\Omega)}\, .
	\end{aligned}
	\end{equation}
	and for the $L^2$ recovery error, we have
	\begin{equation}
	\label{eqn: recovery e0}
	\begin{aligned}
	    e_0^{h,H,l}(a,u) \lesssim \bigg((H\rho_{2,d}(\frac{H}{h}))^2
	    +&\min\left\{1,H\rho_{2,d}(\frac{H}{h})\right\}\\&\times \min\left\{\left(\beta(h,H)\right)^{l/2}\rho_{2,d}(\frac{H}{h}),1\right\}\\
	    &\times \min\left\{\frac{l^{d/2}}{H},\frac{1}{H^{d/2+1}\rho_{2,d}(\frac{H}{h})}\right\}\bigg)\|\cL u\|_{L^2(\Omega)}\, .
	\end{aligned}
	\end{equation}
\item (Overall Galerkin error)
Suppose $d\leq 3$. The energy Galerkin error is upper bounded by the energy recovery error: $\tilde{e}_1^{h,H,l}(a,u)\leq e_1^{h,H,l}(a,u)$. For the $L^2$ Galerkin error, we have
	\begin{equation}
	\label{eqn: garlekin e0}
	\begin{aligned}
	    \tilde{e}_0^{h,H,l}(a,u) \lesssim \bigg(H\rho_{2,d}(\frac{H}{h})
	    +&\min\left\{\left(\beta(h,H)\right)^{l/2}\rho_{2,d}(\frac{H}{h}),1\right\}\\&\times \min\left\{\frac{l^{d/2}}{H},\frac{1}{H^{d/2+1}\rho_{2,d}(\frac{H}{h})}\right\}\bigg)^2\|\cL u\|_{L^2(\Omega)}\, .
	\end{aligned}
	\end{equation}
\end{enumerate}
\end{theorem}
\subsubsection{Implications} Before we move to the proof part, let us first discuss the implications of this theorem. We focus on the localization error in the final estimates.
\begin{itemize}
    \item Fix an $l$ and the ratio $H/h$. Due to \eqref{eqn: recovery e1} and \eqref{eqn: garlekin e0}, the localization error parts in $e_1^{h,H,l}(a,u)$, $\tilde{e}_1^{h,H,l}(a,u)$ and $\tilde{e}_0^{h,H,l}(a,u)$ will blow up as $H$ goes to $0$. In contrast, due to \eqref{eqn: recovery e0}, the localization error in $e_0^{h,H,l}(a,u)$ remains bounded in this limit. Indeed, it is bounded by
    \begin{align*}
        &H\rho_{2,d}(\frac{H}{h})\times \left(\beta(h,H)\right)^{l/2}\rho_{2,d}(\frac{H}{h})\times \frac{l^{d/2}}{H}\|\cL u\|_{L^2(\Omega)}\\
        \leq &~l^{d/2}\left(\beta(h,H)\right)^{l/2}\left(\rho_{2,d}(\frac{H}{h})\right)^2 \|\cL u\|_{L^2(\Omega)}\, ,
    \end{align*}
    which does not blow up as $H \to 0$. This reveals a distinguished behavior of $e_0^{h,H,l}(a,u)$ compared to the other three errors, which have been observed in our experiments. Our analysis explains this phenomenon. 
    \item For $e_1^{h,H,l}(a,u)$, our analysis shows that there is a competition between the approximation error of the ideal solution, $H\rho_{2,d}(\frac{H}{h})$ (we omit $\|\cL u\|_{L^2(\Omega)}$ for simplicity), and the localization error
    \[\min\left\{\left(\beta(h,H)\right)^{l/2}\rho_{2,d}(\frac{H}{h}),1\right\}\times \min\left\{\frac{l^{d/2}}{H},\frac{1}{H^{d/2+1}\rho_{2,d}(\frac{H}{h})}\right\}\, .\]
    Fix an $H$ and $l$. When $d\geq 2$, since $\lim_{h\to 0} \rho_{2,d}(\frac{H}{h})=\infty$, we have that as $h \to 0$, the approximation error goes to infinity, while the localization error goes to zero. When $d=1$, both two parts of errors remain bounded as $h \to 0$, and thus the competition is less pronounced; this matches what we have observed in our 1D experiments -- the effect of reducing $h$ is not as large as in our 2D example. 
    
    The existence of competition implies that in general, there should be a value of $h$ that leads to the best error for the fixed $H$ and $l$. Because the localization error decreases as $l$ increases, this optimal value would also increase for a larger $l$, as observed in our experiments.
    
    The above phenomenon also applies to other errors, i.e., the recover $L^2$ error $e_0^{h,H,l}(a,u)$ and the Galerkin errors $\tilde{e}_1^{h,H,l}(a,u)$ and $\tilde{e}_0^{h,H,l}(a,u)$.
    \item If we fix $H/h$, and want to have an overall error of $O(H)$ (for energy error) or $O(H^2)$ (for the $L^2$ error), then our estimates show that 
    \[l=O(\frac{\log H}{\log \beta(h,H)}) \]
    suffices for this goal. Note that $\beta(h,H)$ can be treated as a constant (less than $1$) when $H/h$ is fixed, so generally $l=O(\log (1/H))$ is enough. Moreover, our experiments demonstrate that we could do much better in practice -- a constant value of $l=2$ or $4$ behaves well for a wide range of $ H $ and $h$.
\end{itemize}
The three points above explain the questions that we raised at the beginning of Subsection \ref{subsec: Analysis: Localized Solution}. 
\begin{remark}
Though the presence of `$\min$' in many places of our estimates complicates the formula, they play critical roles in the above explanations, since we need to choose the correct term inside the `$\min$' to get the desired conclusion.
\end{remark}

\begin{remark}
    In Theorem \ref{thm: error loc solution}, the basis function $\psi_i^{h,H}$ has an exponential decay property; see \eqref{eqn: exp decay}. The localization error should heavily depend on the decay rate, so obtaining a tight bound of this rate is important here. 
In our analysis, we get the rate $\beta(h,H)$, which contains a term $\rho_{2,d}(H/h)$ that increases as $h$ decreases (when $d\geq 2$), and a term $h/H$ that decreases while $h$ decreases. The two mixed components may suggest a non-monotone behavior of the decay rate. Moreover, when $h \to 0$, we get $\beta(h,H) \to 1$, so the decay appears to deteriorate eventually for small $h$. On the other hand, it seems intuitive that once $h$ is small, the measurement region $\omega_i^{h,H}$ becomes more localized, and then the decay shall be amplified. To understand this problem better, we conduct a numerical experiment as follows. For the coefficient $a(x)$ in \eqref{eqn: a(x)} and $H=2^{-5}$, we compute the relative localization error $\frac{\|\psi_i^{h,H}-\psi_i^{h,H,l}\|^2_{H_a^1(\Omega)}}{\|\psi_i^{h,H}\|^2_{H_a^1(\Omega)}}$ for $h = 2^{-5},2^{-6},...,2^{-10}$ and $l=0,1,2,...,5$. The index $i$ is selected so that $\omega_i^H$ is centered in the domain $\Omega$. The result is shown in Fig. \ref{fig: 2d, decay rate}.
\begin{figure}[!htb]
    \centering
    \includegraphics[width=10cm]{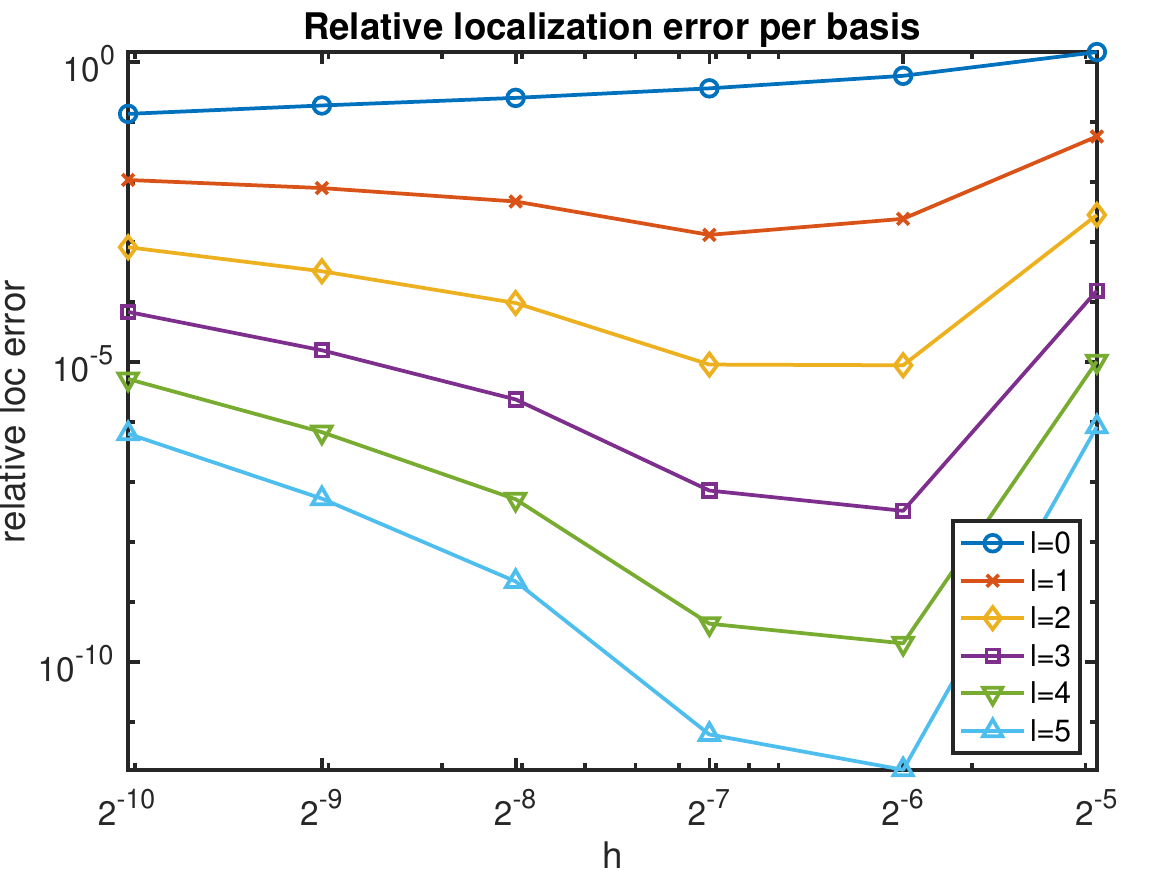}
    \caption{Relative localization error per basis function}
    \label{fig: 2d, decay rate}
    \end{figure} 
\end{remark}
From the figure, we observe that there is indeed a non-monotone behavior with respect to $h$ in the relative localization error. Among these choices of $h$ and $l$, we only see a monotone tendency for $l=0$. For other $l$, the value $h$ that leads to the minimal relative localization error increases as $l$ increases. For the $a(x)$ and $H$ considered, we can see $h/H = 1/2, 1/4$ lead to small errors in general, which also explains that this choice of $h$ works quite well in our previous experiments.  Overall, the above investigation suggests that our bound on the exponential decay and localization error can reasonably predict the behavior in practice. The decay is truly subtle regarding the small parameter $h$.

\begin{remark}
Our current result does not provide explicit clues on how to choose $h$ according to $l$ and $ H $ to achieve the best accuracy. Nonetheless, our experiments have shown that usually $h/H=3/4$ or $1/2$ behaves well, across a wide range of $H=2^{-8},2^{-7},...,2^{-2}$ and $l=2,4$, in the two dimensional problems. Providing more guidance on this aspect, either numerically or theoretically, is left as future work.
\end{remark}
\subsubsection{Proof Strategy}
The results in Theorem \ref{thm: error loc solution} are presented progressively. Our proofs will start from the first and move forward one by one to the seventh. We summarize the main ideas below, together with their connections to existing results in the literature. The detailed proof is in Subsection \ref{subsec Proof of Theorem thm: error loc solution}.
\begin{enumerate}
    \item The inverse estimate is obtained due to a scaling argument -- that is why there is the subsampled scale $h$ appeared. (Subsection \ref{subsec: inv estimate})
    \item Based on the inverse estimate and the subsampled Poincar\'e inequality (see Proposition 2.5 in \cite{chen2019function}), we can establish the exponential decay property via a Caccioppoli type of argument. The logical line of our proof here is similar to that of the original LOD method (Lemma 3.4 in \cite{malqvist_localization_2014}) and Gamblets (Theorem 3.9 in \cite{owhadi_multigrid_2017}), while now we need to be careful to make every estimate adaptive to the small scale parameter $h$. (Subsection \ref{subsec: exp decay})
    \item For the norm estimate, we construct critical examples whose energy norm leads to a desired upper bound. The critical example here is similar to the one we used before to prove the optimality of the subsampled Poincar\'e inequality (see Proposition 2.6 in \cite{chen2019function}). This type of profile has also been studied in the context of semi-supervised learning; see Theorem 2 in \cite{Semi-supervised}. (Subsection \ref{subsec: norm estimate})
    \item The localization error per basis function is established by combining the exponential decay estimate and the norm estimate. Our results contain two parts inside the `min' operation. The idea of proving the first part is similar to that of Lemma 3.4 in \cite{malqvist_localization_2014}. The second part is a direct application of the norm estimate. Both parts are important. The first part captures the exponential decay property, while the second part captures the behavior with respect to small $h$ -- when $d\geq 2$, this estimate implies the localization error per basis function vanishes as $h$ goes to $0$. (Subsection \ref{subsec: Localization Per Basis Function})
    \item To move from the localization error per basis function to the overall localization error, we also proceed in two directions. The first one follows the idea of proving Lemma 3.5 in \cite{malqvist_localization_2014}, leading to an upper bound of $O(l^{d/2}/H)$, which remains bounded as $h \to 0$. On the other hand, we can use simple triangle inequality, which yields an estimate of $O\left(1/\left(H^{d/2+1}\rho_{2,d}(\frac{H}{h})\right)\right)$, which is worse in the power of $H$ than the first one, but can capture the limit as $h\to 0$, i.e., it vanishes as $h\to 0$. The combination of the two leads to the final estimate. (Subsection \ref{subsec: Overall Localization Error})
    \item It is straightforward to go from overall localization error to the energy recovery error by a triangle inequality. For the $L^2$ recovery error, we can bound it through the energy error in two ways, with or without using the subsampled Poincar\'e inequality. This leads to a further `min' operation in the final estimate. (Subsection \ref{subsec: Overall Recovery Error})
    \item The energy Galerkin error is upper bounded by the energy recover error according to the Galerkin orthogonality. The $L^2$ Galerkin error is obtained by the standard Aubin-Nitsche trick. (Subsection \ref{subsec: overall Galerkin error})
\end{enumerate}

\section{Small Limit Regime of Subsampled Lengthscales} 
\label{subsec Small Limit Regime of Subsampled Lengthscales}
In the last section, we have made a detailed study of the recovery error and Galerkin error with respect to $h,H$, and $l$. We observe that there is a deterioration of accuracy as $h$ becomes small, especially for $d\geq 2$ -- the benefit of small localization errors by a very small $h$ is overwhelmed by the curse of induced large approximation errors. Due to this reason, in our experiments, we choose the ratio $h/H$ to be not too small -- we select $h/H\geq 1/8$ in 1D and $h/H\geq 1/4$ in 2D. Our theoretical analysis also collaborates with these observations, as the function $\rho_{2,d}(H/h)$ that appears in the error estimate will blow up as $h/H \to 0$ for $d\geq 2$.

Therefore, we are advised not to use a very small $h$. While this is a practical suggestion in the problem of numerical upscaling, since we have the freedom of choosing the upscaled variables and thus can avoid this pathological phenomenon, in the problem of scattered data approximation, we may not have such flexibility due to the prevalent physical constraints for data measurements. As we often encounter recovery problems in high dimensions with scattered data that possibly have a very small lengthscale, e.g., pointwise data, it is natural to ask that whether we could get an accurate recovery even in the $h\to 0$ regime. The analysis above implies that this goal is not achievable in general for the model problem we have considered. Thus, we need to put stronger assumptions on the function $u$ to be approximated.

Since the degeneracy of accuracy for $d\geq 2$ can be partially attributed to the low regularity of the target function $u$, that is, when $d\geq 2$, functions in $H^1(\Omega)$ may not have a well-defined pointwise value (according to the Sobolev embedding theorem \cite{evans_partial_2010}), a natural idea is to assume $u$ to be more regular. There has been some work in which $u$ is assumed to be in $W^{k,2}(\Omega)$ for some larger $k$ \cite{high_order_regularization}; this assumption ensures the continuity of the function. Alternatively, one can assume $u \in W^{1,p}(\Omega)$ and increase $p$ -- when $p > d$, the degeneracy issue disappears; see \cite{p_Laplacian_Jordan, p_Laplacian_analysis, Lipschitz_learning, Lipschitz_learning_analysis}. 

The above assumptions of better regularity on $u$, either via increasing $k$ or $p$, require to modify the recovery algorithm substantially -- in the former, the basis functions are obtained by replacing the $H_a^1(\Omega)$ norm in \eqref{eqn: optimization def basis} by a high order norm, similar to the polyharmonic splines and their rough version \cite{owhadi2014polyharmonic}; in the latter, the recovery function is obtained by minimizing the $W^{1,p}(\Omega)$ norm subject to the observed data.

Here, to stick to the formulation \eqref{eqn: optimization def basis} and thus the main theme of this paper, we consider to improve the regularity via choosing a singular weight function $a(x)$. Naturally, in order to make the recovery non-degenerate regarding a vanishing $h$, we need to put more importance on the coarse data of a small lengthscale $h$. Thus, we could assume the function is ``nearly flat'' around the data location by using a singular $a(x)$ such that $\int_\Omega a|\nabla u|^2 < \infty$ -- this guarantees the information content of coarse data even for very small $h$. 
We will make this intuition more quantitative in this section.
\subsection{Numerical Experiment} As before, we start with some numerical experiment. We choose $d=2$ and $\Omega=[0,1]^2$. The ground truth function $u$ is depicted in the upper-left of Figure \ref{fig: small h figures}. The coarse scale $H=2^{-2}$, and suppose for now we collect subsampled data with lengthscale $h=H/2=2^{-3}$; the grid size $h_g$ is set to be $2^{-7}$. In the upper-right of Figure \ref{fig: small h figures}, we plot the ideal recovery solution by using $a(x)=1$, the subsampled data $[u,\phi_i^{h,H}], i\in I$ and the ideal basis functions $\{\psi_i^{h,H}\}_{i \in I}$. We observe that to certain extent, the recovery solution can capture the large scale property of $u$.

Then, we decrease the subsampled lengthscale -- we choose $h=2^{-4}\cdot H=2^{-6}$. The recovery solution obtained by solving \eqref{eqn: optimization def basis} with $a(x)=1$ is in the lower-left of Figure \ref{fig: small h figures}. The degeneracy issue becomes apparent -- there are many spikes in the recovery solution, and the locations of these spikes are the data positions. This confirms our understanding that a small $h$ leads to a degenerate recovery.

Now, we define a weight function as follows. For each local patch $\omega_i^{H}, i \in I$, its center is denoted by $x_i \in \omega_i^{H}$. We write $X^H=\bigcup_{i=1}^I \{x_i^H \}$ and $\sfd(x,X^H)$ is the Euclidean distance from $x$ to the set $X^H$. The weight function is defined as
\begin{equation}
\label{eqn: W(x) experiments}
    W(x)=\left(\frac{H}{\sfd(x,X^H) }\right)\log^2\left(1+\frac{H}{\sfd(x,X^H)}\right)\, .
\end{equation}
It is singular at the center of our subsampled data; see Figure \ref{fig: small h W(x) contour}. In the lower-right of Figure \ref{fig: small h figures}, we we construct the recovery solution by solving \eqref{eqn: optimization def basis} with $a(x)=W(x)$. To avoid numerical instability in the experiment, we we use a regularized version of the singular weight as follows:
\begin{equation}
    W(x;h_g)=\left(\frac{H}{\max\{h_g,\sfd(x,X^H)\} }\right)\log^2\left(1+\frac{H}{\max\{h_g,\sfd(x,X^H)\}}\right)\, ,
\end{equation}
where $h_g$ is the grid size. From the figure, we observe that the recovery solution appears much better than the one based on $a(x)=1$. It captures most of the large scale behaviors. Moreover, it is visually smoother -- due to the singular weight function, the impact of the subsampled data does propagate to other points in the domain.

\begin{figure}[!htb]
    \centering
    \includegraphics[width=6cm]{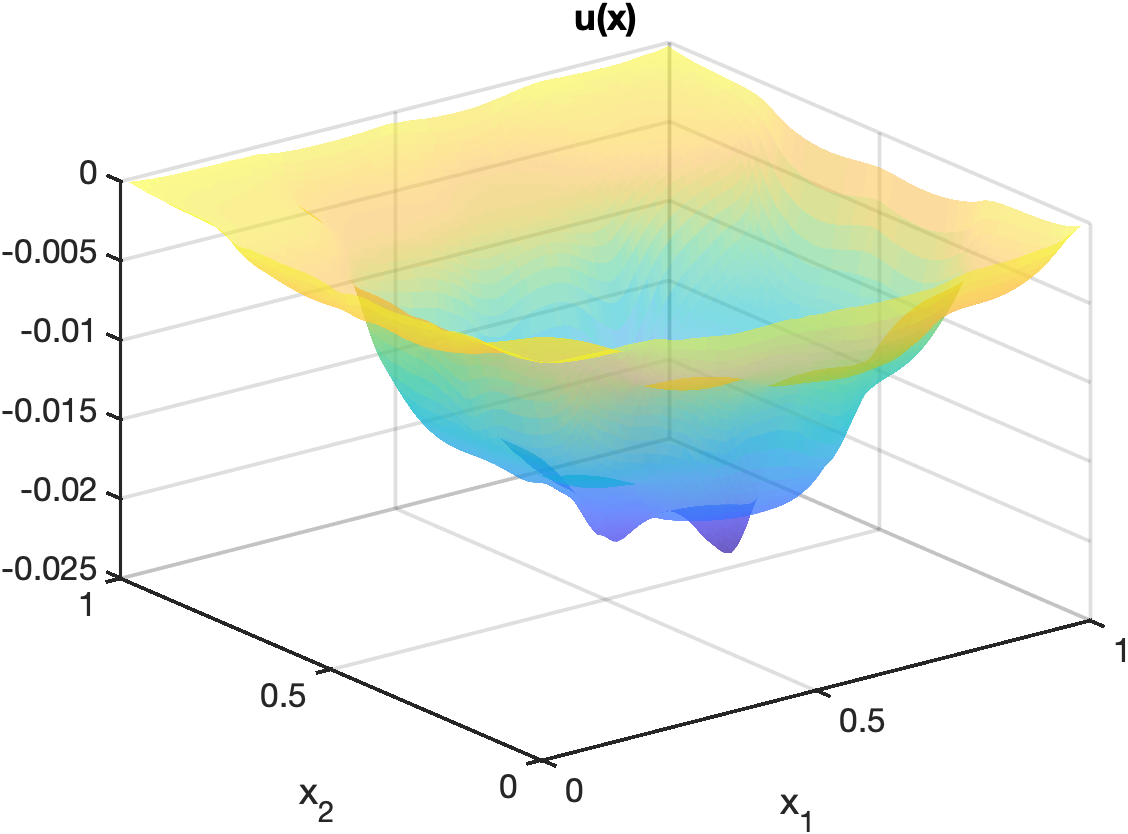}
    \includegraphics[width=6cm]{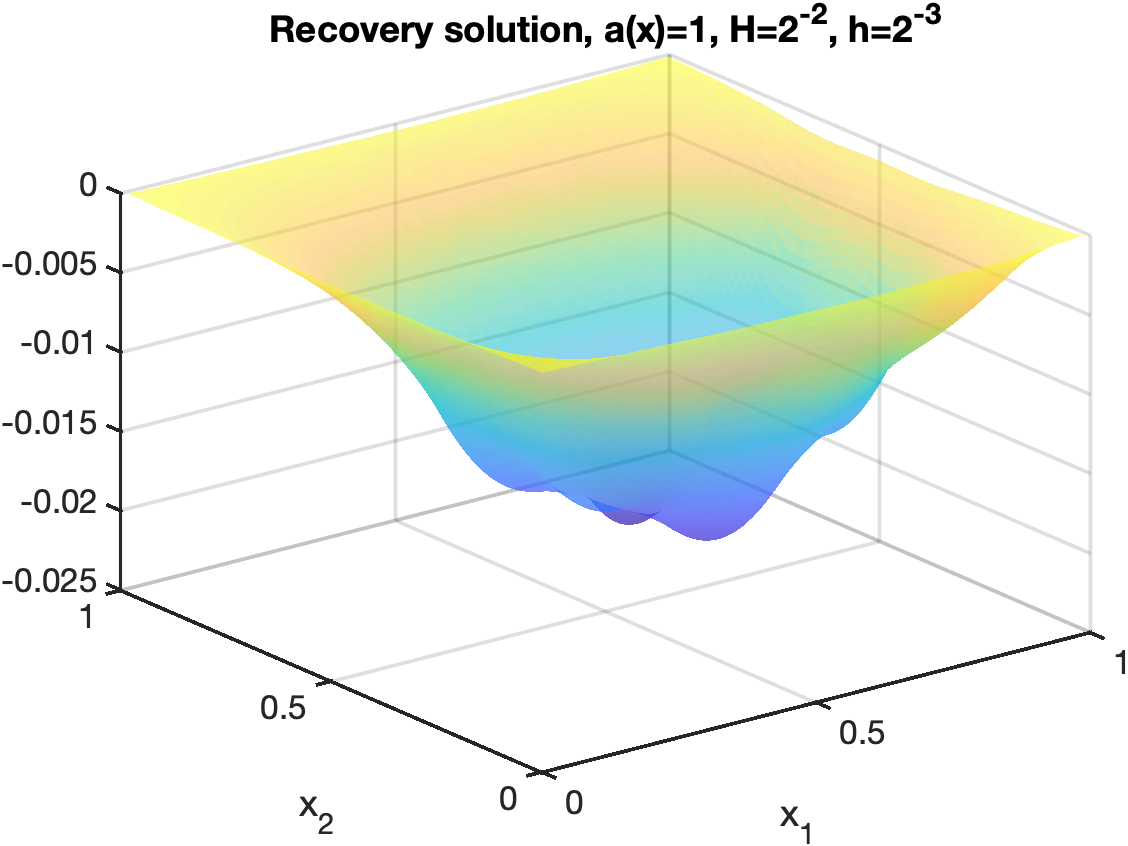}
    \includegraphics[width=6cm]{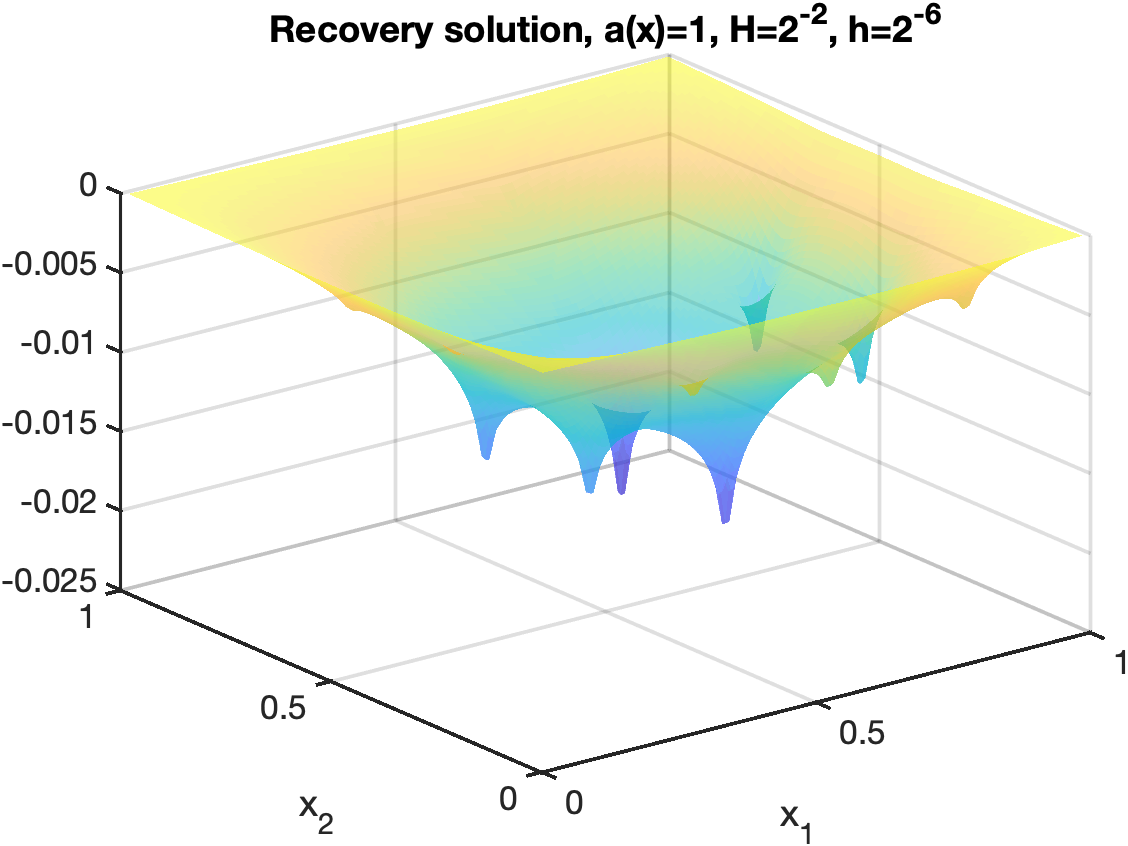}
    \includegraphics[width=6cm]{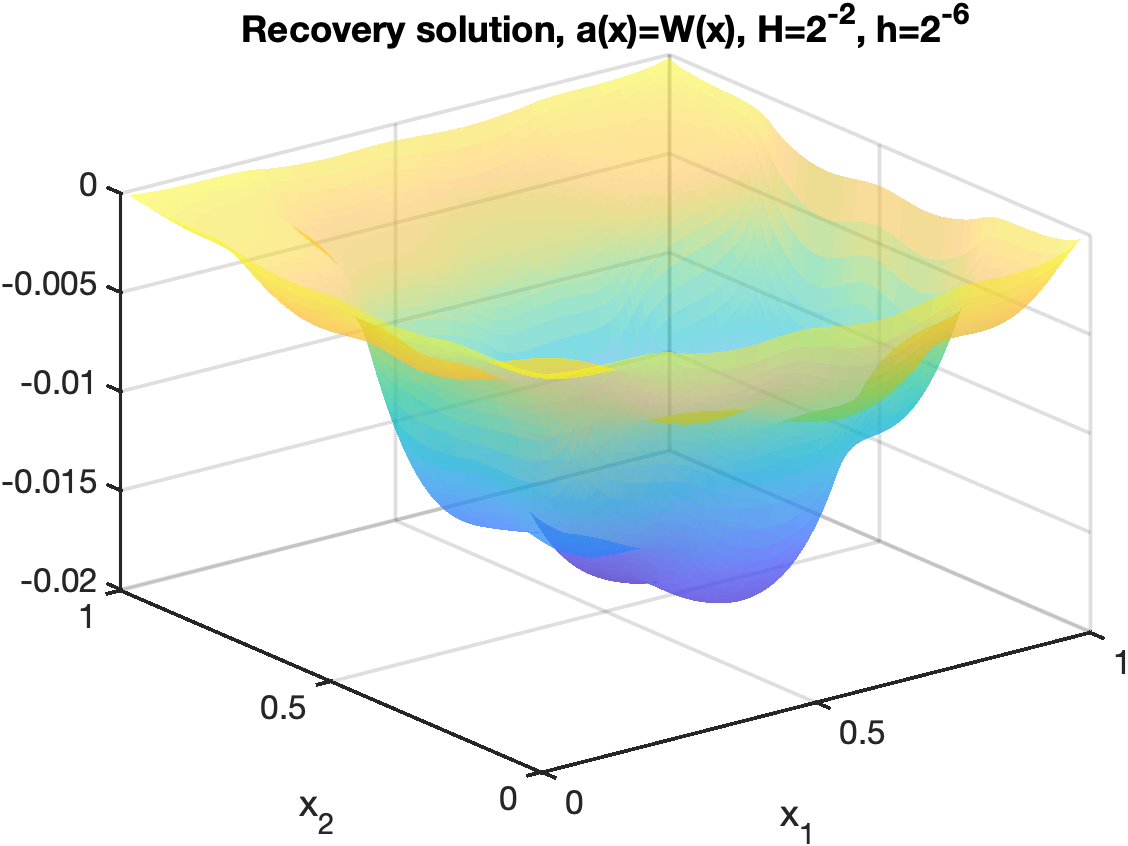}
    \caption{Upper left: $u(x)$; upper right: recovery solution, $h/H=1/2$ and $a(x)=1$; lower left: recovery solution, $h/H=1/2^4$ and $a(x)=1$; lower right: recovery solution, $h/H=1/2^4$ and $a(x)=W(x)$.}
    \label{fig: small h figures}
\end{figure} 

\begin{figure}[!htb]
    \centering
    \includegraphics[width=6cm]{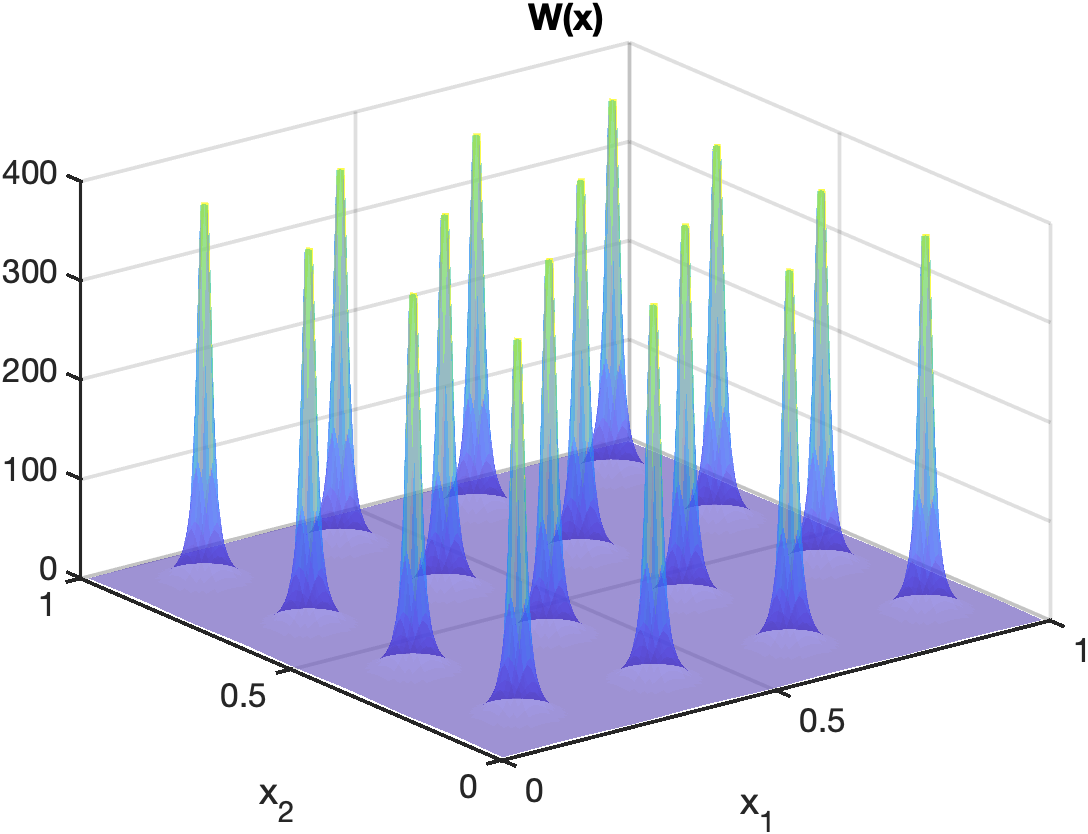}
    \includegraphics[width=6cm]{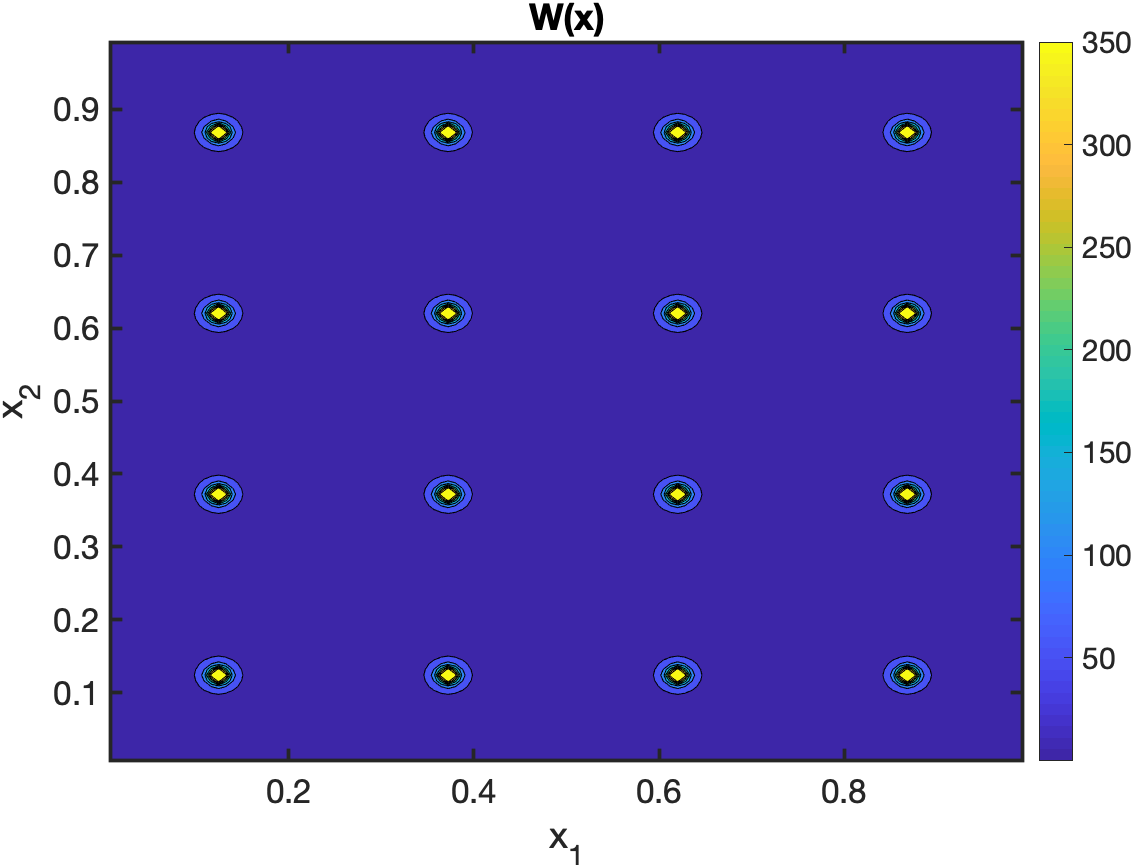}
    \caption{Left: figure of $W(x)$; right: contour of $W(x)$}
    \label{fig: small h W(x) contour}
\end{figure} 
\begin{remark}
The idea of function recovery based on a weight function that puts more importance around the data regions has been used in semisupervised learning and image processing \cite{shi2017weighted}, through using a weighted graph Laplacian. Recently, the work \cite{calder2019properly} proposed a properly weighted Laplacian that attains a well-defined continuous limit. Our earlier work \cite{chen2019function} also discussed a similar weighted discovery. In the next subsection, we will provide some theoretical analysis of this recovery based on results in \cite{chen2019function}, assuming $u(x)$ belonging to a weighted function space.
\end{remark}

\subsection{Analysis: Weighted Inequality} 
For simplicity, in dimension $d\geq 2$, we consider the following class of weight functions:
\begin{equation}
    W_{\gamma,H}(x)=\left(\frac{H}{\sfd(x,X^H) }\right)^{d-2+\gamma}\, ,
\end{equation}
where $\gamma > 0$. Indeed, the additional $\log$ term in \eqref{eqn: W(x) experiments} only makes the problem easier, since it makes the function blow up even faster. 

We use the same notation as in Subsection \ref{subsec: notation}. Then, we have the following theorem:
\begin{theorem}
\label{thm: weighted estimates}
Let $d\geq 2$ and $\gamma>0$. Fix an $H$, and we choose $a(x)=W_{\gamma,H}(x)$. Then the following results hold:
\begin{enumerate}
    \item If $\|u\|_{H_a^1(\Omega)}<\infty$, then the $L^2$ error of the ideal solution satisfies
    \begin{equation}
        e_0^{h,H,\infty}(a,u)\lesssim C(\gamma)H\|u\|_{H_a^1(\Omega)}\, .
    \end{equation}
    \item If $-\nabla \cdot (a\nabla u)=f\in L^2(\Omega)$, then the energy error of the ideal solution satisfies
    \begin{equation}
        e_1^{h,H,\infty}(a,u)\lesssim C(\gamma)H\|f\|_{L^2(\Omega)}\, ;
    \end{equation}
    and the $L^2$ error satisfies
    \begin{equation}
        e_0^{h,H,\infty}(a,u)\lesssim C(\gamma)H^2\|f\|_{L^2(\Omega)}\, .
    \end{equation}
\end{enumerate}
Here, $C(\gamma)$ represents a positive constant that depends on $\gamma$ only, and can vary its value from place to place.
\end{theorem}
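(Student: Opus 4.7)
The plan is to reduce all three claims to a single weighted subsampled Poincar\'e inequality, namely: for every $v\in H_a^1(\Omega)$ with $[v,\phi_i^{h,H}]=0$ for all $i\in I$, we have $\|v\|_{L^2(\Omega)}\leq C(\gamma) H\,\|v\|_{H_a^1(\Omega)}$ with $C(\gamma)$ \emph{independent of $h$}. Once this is established, the three estimates will follow from the same Galerkin/duality template used for Theorem \ref{thm: err ideal sol}.

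The main obstacle is this Poincar\'e inequality, since its constant must stay bounded as $h\to 0$; this is precisely where the singular weight must be exploited. After localizing to a single cube $\omega_i^H$ and translating $x_i$ to the origin, $a(x)=H^{d-2+\gamma}|x|^{-(d-2+\gamma)}$. A radial Cauchy--Schwarz along $y=r\omega$, splitting $1=r^{(1-\gamma)/2}\cdot r^{(\gamma-1)/2}$, gives the weighted Morrey-type bound
\begin{equation*}
|v(y)-v(0)|^2 \,\leq\, \tfrac{|y|^{\gamma}}{\gamma}\int_0^{|y|}|\partial_r v(r\omega)|^2\, r^{1-\gamma}\,dr .
\end{equation*}
Integrating over $\omega_i^H$ in polar coordinates and recognizing $r^{1-\gamma}\,dr\,d\omega$ as the polar form of $a(x)\,dx/H^{d-2+\gamma}$ yields $\|v-v(0)\|_{L^2(\omega_i^H)}^2 \lesssim C(\gamma)\,H^2\|v\|_{H_a^1(\omega_i^H)}^2$. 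To control $|v(0)|$, I exploit the mean-zero constraint: since $[v,\phi_i^{h,H}]=0$ and $\phi_i^{h,H}$ is the normalized indicator of $\omega_i^{h,H}$, $v(0)$ equals the average of $v(0)-v(y)$ over $\omega_i^{h,H}$, so Jensen together with the same pointwise estimate applied at scale $h$ gives $|v(0)|^2 \leq C(\gamma)\,h^\gamma H^{-(d-2+\gamma)}\|v\|_{H_a^1(\omega_i^H)}^2$. Multiplying by $|\omega_i^H|\simeq H^d$ and using $h\leq H$ with $\gamma>0$ yields $|\omega_i^H|\,|v(0)|^2 \leq C(\gamma) H^{2-\gamma}h^{\gamma}\|v\|_{H_a^1(\omega_i^H)}^2 \leq C(\gamma) H^2\|v\|_{H_a^1(\omega_i^H)}^2$. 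This $h^\gamma$-gain is exactly what cancels the blow-up of $\rho_{2,d}(H/h)$ that spoils the unweighted estimate. Summing over $i\in I$ finishes the inequality.

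With the weighted Poincar\'e inequality in hand, the three claims follow routinely. For (1), the variational definition \eqref{eqn: optimization def basis} identifies $\sfP^{h,H}$ as the $\left<\cdot,\cdot\right>_a$-orthogonal projection onto $\operatorname{span}\{\psi_i^{h,H}\}_{i\in I}$ (equivalently, the interpolation conditions force $[u-u^{\text{ideal}},\phi_j^{h,H}]=0$ for all $j$), so $u-u^{\text{ideal}}$ is admissible for the Poincar\'e bound and $\|u-u^{\text{ideal}}\|_{H_a^1}\leq\|u\|_{H_a^1}$; combining gives $e_0^{h,H,\infty}(a,u)\lesssim C(\gamma)H\|u\|_{H_a^1(\Omega)}$. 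For the energy bound in (2), I use $a$-orthogonality to write $\|u-u^{\text{ideal}}\|_{H_a^1}^2=\left<u-u^{\text{ideal}},u\right>_a$, integrate by parts to get $\int f(u-u^{\text{ideal}})\,dx$, and apply Cauchy--Schwarz followed by the weighted Poincar\'e inequality, obtaining $\|u-u^{\text{ideal}}\|_{H_a^1}\leq C(\gamma)H\|f\|_{L^2}$. Finally, the $L^2$ bound in (2) comes from the standard Aubin--Nitsche duality argument: let $w\in H_0^1(\Omega)$ solve $-\nabla\cdot(a\nabla w)=u-u^{\text{ideal}}$ and expand $\|u-u^{\text{ideal}}\|_{L^2}^2=\left<u-u^{\text{ideal}},w-\sfP^{h,H} w\right>_a\leq\|u-u^{\text{ideal}}\|_{H_a^1}\cdot\|w-\sfP^{h,H} w\|_{H_a^1}$, then apply the energy estimate to both factors to pick up the extra factor of $H$.
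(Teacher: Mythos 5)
Your proposal is correct and follows essentially the same route as the paper: both reduce everything to an $h$-independent weighted subsampled Poincar\'e inequality and then run the standard orthogonality/Aubin--Nitsche template. The one substantive difference is that the paper imports that inequality as a black box from \cite{chen2019function} (Theorem 4.3 and Example 1), whereas you prove it in-line via the radial Cauchy--Schwarz splitting $1=r^{(1-\gamma)/2}r^{(\gamma-1)/2}$ and the $h^\gamma$-gain at the data point -- this makes the argument self-contained and exposes exactly where the singular weight defeats the $\rho_{2,d}(H/h)$ blow-up, at the cost of glossing over the technicality that $v(0)$ is a well-defined trace (which your Morrey-type bound does justify for $\gamma>0$). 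Your energy estimate in part (2) is also marginally more direct than the paper's: you use Galerkin orthogonality against $u^{\text{ideal}}$ itself together with the global Poincar\'e bound on $u-u^{\text{ideal}}$, while the paper constructs a competitor $v=\cL^{-1}(\sum_i c_i\phi_i^{h,H})$ with $c_i=\int_{\omega_i^H}f$ and subtracts local averages; the two are equivalent.
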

The proof is deferred to Subsection \ref{subsec: Proof of Theorem weighted estimates}. We observe from the theorem that, the upper bound of the accuracy is independent of the subsampled scale $h$, which implies that it is still valid in the small $h$ limit. This is in sharp contrast with the estimates in Theorem \ref{thm: err ideal sol}, where the upper bound blows up as $h\to 0$. The key here is the use of a singular weight function that puts more importance on the subsampled data.

We also use a numerical experiment to demonstrate this theorem. We choose $d=2$, $\Omega=[0,1]^2$ and $H=2^{-2}$. The parameter $\gamma=1$. We use the mechanism in Subsection \ref{subsec: ideal 2d experiments} to generate a right-hand side $f\in L^2(\Omega)$, and $u$ solves \[-\nabla \cdot (W_{\gamma,H}\nabla u)=f\, .\]
The grid size is set to be $2^{-8}$. We choose $h=2^{-3},2^{-4},...,2^{-7}$. For each $h$, we collect the data $[u,\phi_i^{h,H}],i\in I$ and compute the ideal recovery solutions by solving \eqref{eqn: optimization def basis} with $a(x)=1$ and $a(x)=W_{\gamma,H}(x)$ respectively. We output the $H_0^1(\Omega)$ and $L^2(\Omega)$ error of these recovery solutions in Figure \ref{fig:weighted recovery errors}.
\begin{figure}[!htb]
    \centering
    \includegraphics[width=6cm]{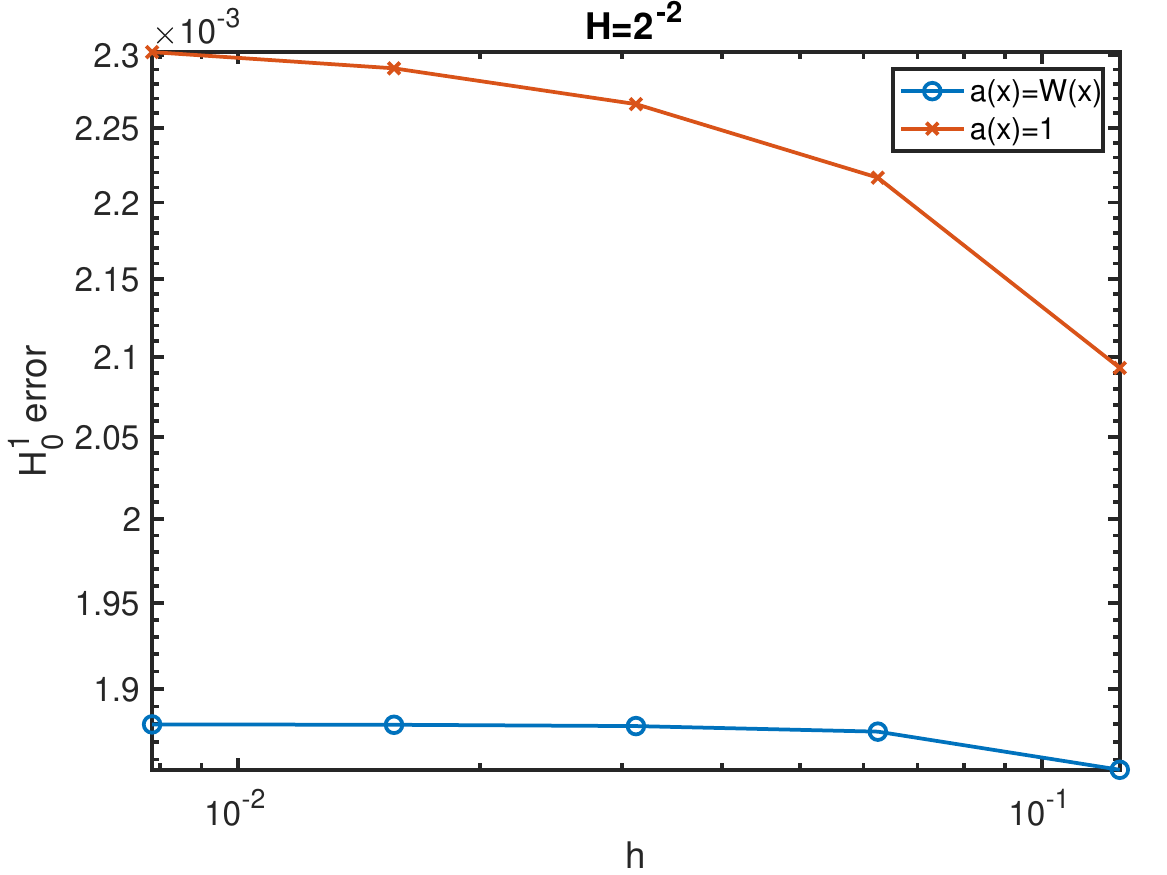}
    \includegraphics[width=6cm]{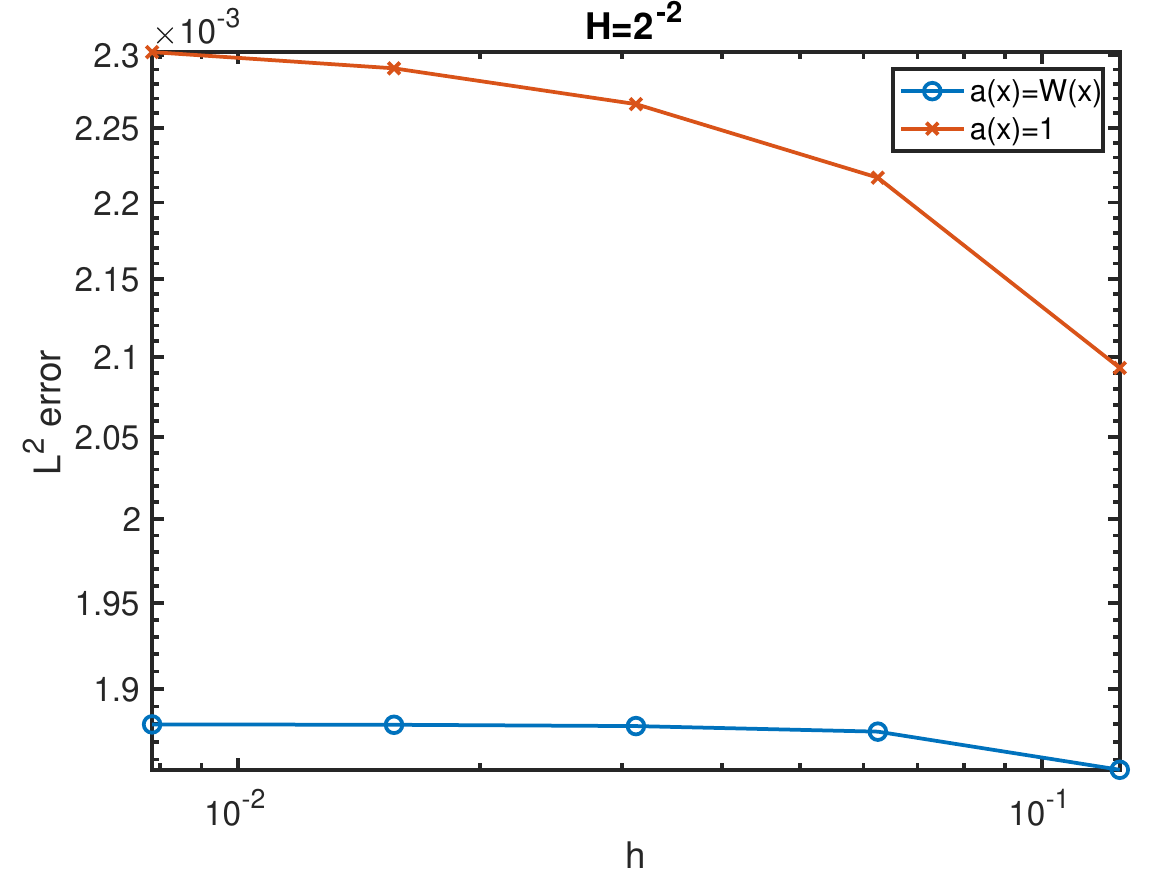}
    \caption{The $H_0^1(\Omega)$ and $L^2(\Omega)$ errors for different $h$, using constant $a(x)$ or singular weighted $a(x)$. Left: $H_0^1(\Omega)$ error; right: $L^2(\Omega)$ error}
    \label{fig:weighted recovery errors}
\end{figure} 
From this figure, we observe that the recovery errors using $a(x)=1$ will increase as $h$ decrease, while those using $a(x)=W_{\gamma,H}(x)$ lead to a flattened curve with respect to $h$. This matches our theoretical predictions. Since in this example the dimension $d=2$, the blow-up rate predicted by Theorem \ref{thm: err ideal sol} is only logarithmic, so even though $h$ is very small, the overall accuracy is still not too bad.

\section{Proofs} This section provides all the proofs in this paper.
\label{sec:proof}
\subsection{Proof of Theorem \ref{thm: error loc solution}} 
\label{subsec Proof of Theorem thm: error loc solution}
There are seven sub-results in this theorem. We prove them one by one.
\subsubsection{Inverse Estimate}
\label{subsec: inv estimate}
	In the domain $\omega_j^{h,H}$, we have $\nabla \cdot (a \nabla v)=c_i\phi_i^{h,H}$ for some $c_i \in \bR$. Let $v=v_1+v_2$ such that
	\[\nabla \cdot (a \nabla v_1)= \nabla \cdot (a \nabla v)=c_i\phi_i^{h,H}\ \text{in}\  \omega_j^{h,H}, \quad v_1|_{\partial \omega_j^{h,H}}=0\, ,\]
	and for the second part,
	\[\nabla \cdot (a \nabla v_2)= 0\ \text{in}\  \omega_j^{h,H},\quad v_2|_{\partial \omega_j^{h,H}}=v|_{\partial \omega_j^{h,H}}\, .\]
	We have the orthogonality: $\int_{\omega_j^{h,H}}a\nabla v_1 \cdot \nabla v_2 =0$. Thus, it holds that
	\begin{equation}
	\label{eqn:h-b decompose}
	    \|v\|_{H_a^1(\omega_j^{h,H})}\geq \|v_1\|_{H_a^1(\omega_j^{h,H})}\, .
	\end{equation}
	For $v_1$, we use the elliptic estimate: 
	\[\|v_1\|_{H_a^1(\omega_j^{h,H})}\geq \frac{1}{\sqrt{a_{\max}}}\|\nabla \cdot (a \nabla v_1)\|_{H^{-1}(\omega_j^{h,H})}=\frac{1}{\sqrt{a_{\max}}}\|c_j\phi_j^{h,H}\|_{H^{-1}(\omega_j^{h,H})}\, .\]
	By a scaling argument, we obtain
	\[\|\phi_j^{h,H} \|_{L^2(\omega_j^{h,H})}\leq \frac{C_2(d)}{h}\|\phi_j^{h,H}\|_{H^{-1}(\omega_j^{h,H})}\, , \]
	for a constant $C_2(d)$ dependent on $d$. Then, it follows that
	\begin{equation}
	\label{eqn: inv estimate 2}
	    \|v_1\|_{H_a^1(\omega_j^{h,H})}\geq \frac{h}{\sqrt{a_{\max}}C_2(d)}\|c_j\phi_j^{h,H} \|_{L^2(\omega_j^{h,H})}=\frac{h}{\sqrt{a_{\max}}C_2(d)}\|\nabla \cdot (a \nabla v)\|_{L^2(\omega_j^{h,H})}\, .
	\end{equation}
	 Combining \eqref{eqn:h-b decompose} and \eqref{eqn: inv estimate 2}, we arrive at the desired result:
\[\|\nabla \cdot (a \nabla v)\|_{L^2(\omega_j^{h,H})}\leq \frac{\sqrt{a_{\max}}C_2(d)}{h}\|v\|_{H_a^1(\omega_j^{h,H})}\, . \]
\subsubsection{Exponential Decay}
\label{subsec: exp decay}
Fix $i \in I$. For ease of notations, we will write $\psi^{h,H}_i$ by $\psi$, and $\rN^{k}(\omega_i^H)$ by $S_k$ in this proof. 		
	
	First, we choose a cut-off function $\eta $ with value $0$ in $S_k$ and value $1$ in $S_{k+1}^c$ such that it satisfies $\eta \geq 0$ and  $\|\nabla \eta\|_{\infty}\leq C_0(d)/H$ for some universal constant $C_0(d)$ dependent on $d$. An example of $\eta$ could be 
	\[\eta(x)=\frac{\text{dist}(x,S_k)}{\text{dist}(x,S_k)+\text{dist}(x,S_{k+1}^c)}\, . \]
	Then, we obtain the relation:
	\begin{equation}
	\label{eqn: exp decay 0}
	    \|\psi\|^2_{H_a^1(\Omega \backslash S_{k+1})}=\int_{\Omega \backslash S_{k+1}} \nabla \psi \cdot a \nabla \psi \leq \int_{\Omega} \eta \nabla \psi \cdot a \nabla \psi \, .
	\end{equation}
	Using some algebra, we have 
	\begin{align*}
	    \eta\nabla \psi \cdot a \nabla \psi&=\nabla (\eta \psi)\cdot a\nabla \psi-(\nabla \eta)\cdot a\psi \nabla \psi\\
	    &=\nabla\cdot(\eta \psi a \nabla \psi)-\eta \psi \nabla \cdot (a\nabla \psi)-(\nabla \eta)\cdot a\psi \nabla \psi\, .
	\end{align*}
	 Integrating the above formula in $\Omega$ and applying the divergence theorem yields
	\begin{equation}
	    \label{eqn: exp decay 1}
	    \int_{\Omega} \eta \nabla \psi \cdot a \nabla \psi
	\leq \left|\int_{\Omega} -\eta \psi\nabla \cdot (a \nabla \psi)\right| + \left|\int_{\Omega} (\nabla \eta) \cdot\psi a \nabla \psi\right| \, .
	\end{equation}
	For the first term in \eqref{eqn: exp decay 1}, we have 
	\begin{equation}
	\label{eqn: exp decay 2}
	 \begin{aligned}
	\int_{\Omega} -\eta \psi\nabla \cdot (a \nabla \psi)&\overset{(a)}{=}\sum_{\omega_j^H \subset S_{k+1} \backslash S_k} \int_{\omega_j^H} -\eta \psi\nabla \cdot (a \nabla \psi)\\
	&\overset{(b)}{=}\sum_{\omega_j^H \subset S_{k+1} \backslash S_k} \int_{\omega_j^{h,H}} -\eta \psi\nabla \cdot (a \nabla \psi)\\
	&\overset{(c)}{=}\sum_{\omega_j^H \subset S_{k+1} \backslash S_k} \int_{\omega_j^{h,H}} -\left(\eta-\eta(x_j)\right) \psi\nabla \cdot (a \nabla \psi)\\
	&\overset{(d)}{\leq} \sum_{\omega_j^H \subset S_{k+1} \backslash S_k}\frac{C_0(d)h}{H}\|\psi\|_{L^2(\omega_j^{h,H})}\|\nabla \cdot (a \nabla \psi)\|_{L^2(\omega_j^{h,H})}\, ,
	\end{aligned}
	\end{equation}
	where,
	\begin{itemize}
	    \item in $(a)$, we have used the fact that $\eta$ is supported in $\Omega \backslash S_{k}$; moreover, in $\Omega \backslash S_{k+1}$, $\eta=1$ and $\nabla \cdot (a \nabla \psi)=\sum_j c_j \phi_j^{h,H}$ for some $c_j \in \bR$, and we have relied on the property $\int_{\omega_j^{H}} \phi^{h,H}_j \psi= 0$ for $\omega_j^{H} \in \Omega \backslash S_{k+1}$;
	    \item in $(b)$, we have used the fact that $\phi_j^{h,H}$ is supported in $\omega_j^{h,H}$;
	    \item in $(c)$, we have relied on the fact $\int_{\omega_j^{h,H}} \phi^{h,H}_j \psi= 0$ for $\omega_j^{h,H} \in \Omega \backslash S_k$ so we can subtract $\eta$ by the constant $\eta(x_j)$ for $x_j$ being the center of $\omega^{h,H}_j$;
	    \item in $(d)$ we have used the gradient bound on $\eta$ and the Cauchy-Schwarz inequality.
	\end{itemize}
	  For the term $\|\nabla \cdot (a \nabla \psi)\|_{L^2(\omega_j^{h,H})}$, we apply the inverse estimate established earlier, which leads to 
	\begin{align*}
	\eqref{eqn: exp decay 2}&\leq \frac{C_0(d)h}{H}\frac{\sqrt{a_{\max}}C_2(d)}{h} \sum_{\omega_j^H \subset S_{k+1} \backslash S_k} \|\psi\|_{L^2(\omega_j^{h,H})}\|\psi\|_{H^1_a(\omega_j^{h,H})}\\
	&\overset{(e)}{\leq} \frac{C_0(d)h}{H}\sqrt{a_{\max}}C_2(d)C_1(d)\sum_{\omega_j^H \subset S_{k+1} \backslash S_k} \|\nabla \psi\|_{L^2(\omega_j^{h,H})}\|\psi\|_{H^1_a(\omega_j^{h,H})}\\
	&\overset{(f)}{\leq} \frac{C_0(d)C_1(d)C_2(d)h\sqrt{a_{\max}}}{H}\|\nabla \psi\|_{L^2(S_{k+1}\backslash S_k)}\|\psi\|_{H_a^1(S_{k+1}\backslash S_k)}\\
	&\leq \frac{C_0(d)C_1(d)C_2(d)h}{H} \sqrt{\frac{a_{\max}}{a_{\min}}} \|\psi\|^2_{H^1_a(S_{k+1} \cap S_k^c)}\, ,
	\end{align*} 
	where in $(e)$, we have used the Poincar\' e inequality, based on the fact $\int_{\omega^{h,H}_j} \psi \phi^{h,H}_j=0$. The constant in the Poincar\'e inequality can be chosen the same as the one in Theorem \ref{thm: err ideal sol}, i.e., $C_1(d)$; for details see Proposition 2.5 and Theorem 3.3 in \cite{chen2019function}. The step $(f)$ is by the Cauchy-Schwarz inequality.
	
	For the second term in \eqref{eqn: exp decay 1}, we have 
	\begin{align*}
	\int_{\Omega} (\nabla \eta) \cdot \psi a \nabla \psi &= \int_{S_{k+1} \backslash S_k} (\nabla \eta) \cdot \psi a \nabla \psi\\
	&=\sum_{\omega_j^H \subset S_{k+1} \backslash S_k} \int_{\omega_j^H} (\nabla \eta)\cdot \psi a \nabla \psi\\
	&\leq \frac{C_0(d)\sqrt{a_{\max}}}{H}\sum_{\omega_j^H \subset S_{k+1} \backslash S_k}\|\psi\|_{L^2(\omega_j^H)} \|\psi\|_{H^1_a(\omega_j^H)}\\
	&\overset{(g)}{\leq} \frac{C_0(d)\sqrt{a_{\max}}}{H}\sum_{\omega_j^H \subset S_{k+1} \backslash S_k}H\rho_{2,d}(\frac{H}{h})C_1(d)\|\nabla \psi\|_{L^2(\omega_j^H)}\|\psi\|_{H_a^1(\omega_j^H)}\\
	&\leq C_0(d)C_1(d)\rho_{2,d}(\frac{H}{h})\sqrt{\frac{a_{\max}}{a_{\min}}}\|\psi\|^2_{H^1_a(S_{k+1} \backslash S_k)}\, ,
	\end{align*}
	where in step $(g)$, we have used the subsampled Poincar\'e inequality (Proposition 2.5 in \cite{chen2019function}) and the fact $\int_{\omega_j^H} \phi^{h,H}_j \psi= 0$. 
	
	Combining the estimates of the two terms and \eqref{eqn: exp decay 0}, we get
	\[\|\psi\|^2_{H_a^1(\Omega \backslash S_{k+1})} \leq C_0(d)(C_1(d)\rho_{2,d}(\frac{H}{h})+C_1(d)C_2(d)\frac{h}{H})\sqrt{\frac{a_{\max}}{a_{\min}}}\|\psi\|^2_{H^1_a(S_{k+1} \backslash S_k)}\, .\]
	Writing $\|\psi\|^2_{H^1_a(S_{k+1} \backslash S_k)}=\|\psi\|^2_{H^1_a(\Omega \backslash S_k)}-\|\psi\|^2_{H^1_a(\Omega \backslash S_{k+1})}$, we then arrive at 
	\[\|\psi\|^2_{H^1_a(\Omega \backslash S_{k+1})}\leq \beta(h,H) \|\psi\|^2_{H^1_a(\Omega \backslash S_{k})}\leq ... \leq \left(\beta(h,H)\right)^{k+1} \|\psi\|^2_{H^1_a(\Omega)}\, ,  \]
	where 
	\[\beta(h,H)=\frac{C_0(d)\sqrt{\frac{a_{\max}}{a_{\min}}}\left(C_1(d)\rho_{2,d}(\frac{H}{h})+C_1(d)C_2(d)\frac{h}{H}\right)}{C_0(d)\sqrt{\frac{a_{\max}}{a_{\min}}}\left(C_1(d)\rho_{2,d}(\frac{H}{h})+C_1(d)C_2(d)\frac{h}{H}\right)+1}\, . \]
\subsubsection{Norm Estimate} 
\label{subsec: norm estimate}
Let us recall the definition of $\psi_i^{h,H}$ and $\psi_i^{h,H,l}$ for $l=0$:
\begin{equation}
    \begin{aligned}
    \psi_{i}^{h,H} = \text{argmin}_{\psi \in H_0^1(\Omega)}\quad  &\|\psi\|_{H_a^1(\Omega)}^2 \\
     \text{subject to}\quad &[\psi, \phi_j^{h,H}] = \delta_{i,j}\ \  \text{for}\ \  j \in I \, .
 \end{aligned}
    \end{equation}
 \begin{equation}
 \label{eqn: proof norm estimate 1}
    \begin{aligned}
    \psi_{i}^{h,H,0} = \text{argmin}_{\psi \in H_0^1(\omega_i^H)}\quad  &\|\psi\|_{H_a^1(\omega_i^H)}^2 \\
     \text{subject to}\quad &[\psi, \phi_i^{h,H}] = 1\, .
 \end{aligned}
    \end{equation}   
Clearly, $\|\psi_i^{h,H}\|_{H_a^1(\Omega)}\leq \|\psi_i^{h,H,0}\|_{H_a^1(\omega_i^H)}$ so it suffices to estimate the latter. Without loss of generality, we can assume $\omega_i^{H}$ is centered at $0$, so that $\omega_i^{h,H}=[-h/2,h/2]^d$ and $\omega_i^{H}=[-H/2,H/2]^d$.

First, we choose $v \in H_0^1(\omega_i^H)$ to be a cut-off function that equals $1$ in $[-H/4,H/4]^d$ and equals $0$ outside $\omega_i^H$. Moreover, $v \geq 0$ and $\|\nabla v\|_{\infty}\lesssim 1/H$. Then, we have
\[[v,\phi_i^{h,H}]=\frac{1}{h^d}\int_{[-h/2,h/2]^d}v \simeq 1\, ,\]
and
\[\|v\|^2_{H_a^1(\omega_i^H)}\lesssim \int_{\omega_i^{H}} |\nabla v|^2\lesssim H^d\cdot \frac{1}{H^2}\lesssim H^{d-2}\, . \]
Define $w=v/[v,\phi_i^{h,H}]$, then $w$ satisfies the constraint in \eqref{eqn: proof norm estimate 1}, and $\|w\|_{H_a^1(\omega_i^H)}\lesssim H^{d/2-1}$, which leads to $\|\psi_i^{h,H,0}\|_{H_a^1(\omega_i^H)}\lesssim H^{d/2-1}$.  Thus, the case $d=1$ is proved.

Second, we deal with the case $d=2$. Suppose $h\leq H/2$, and we choose
\begin{equation*}
    v(x)=\left\{
    \begin{aligned}
    1-\frac{\log(1+\frac{4|x|}{h})}{\log(1+\frac{H}{h})}, \quad &|x|\leq \frac{H}{4}\\
    0, \quad &|x|> \frac{H}{4} \, .
    \end{aligned}
    \right.
\end{equation*}
We have $v(x)\leq 1$, and for $|x|\leq h/4$, $v(x)\geq 1-\frac{\log(2)}{\log(3)}\gtrsim 1$. Therefore, it holds that
\[[v,\phi_i^{h,H}]=\frac{1}{h^d}\int_{[-h/2,h/2]^d}v \simeq 1\, .\]
Then, we calculate the energy norm of $v$ as follows:
\begin{equation*}
    \begin{aligned}
    \|v\|_{H_a^1(\omega_i^H)}^2&\lesssim\frac{1}{\log^2(1+\frac{H}{h})}\int_{B(0,H/4)}\left(\frac{1}{h+4|x|}\right)^2\, \rd x\\
    &\lesssim \frac{1}{\log^2(1+\frac{H}{h})} \int_0^{H/4}\frac{r}{(4r+h)^2}\, \rd r\, .
    \end{aligned}
\end{equation*}
We write $\int_0^{H/4}\frac{r}{(4r+h)^2}\, \rd r=\int_0^{h/2}\frac{r}{(4r+h)^2}\, \rd r+\int_{h/2}^{H/4} \frac{r}{(4r+h)^2} \, \rd r \lesssim \int_0^{h/2}\frac{1}{h}\, \rd r+\int_{h/2}^{H/4} \frac{1}{r} \, \rd r$ $\lesssim \log(1+\frac{H}{h})$. Thus, it follows that \[\|v\|_{H_a^1(\omega_i^H)}\lesssim \left(\frac{1}{\log(1+\frac{H}{h})}\right)^{1/2}=\frac{1}{\rho_{2,d}(\frac{H}{h})}\, .\]
This concludes the proof for the case $h\leq H/2$. When $h>H/2$, we use the result in the first step $\|v\|_{H_a^1(\omega_i^H)}\lesssim H^{d/2-1}\lesssim 1 \lesssim \frac{1}{\rho_{2,d}(\frac{H}{h})}$. The case $d=2$ is proved.

Finally, when $d\geq 3$, we choose $v$ in a similar fashion as in the first step, such that $v=1$ in $[-h/4,h/4]^d$ and $v=1$ outside $[-h/2,h/2]^d$. Moreover, $v \geq 0$ and $\|\nabla v\|_{\infty}\lesssim 1/h$. Following the same argument in the first step, we will arrive at \[\|\psi_i^{h,H,0}\|_{H_a^1(\omega_i^H)}\lesssim h^{d/2-1}= \frac{1}{\rho_{2,d}(\frac{H}{h})}H^{d/2-1}\, ,\] which completes the proof.
\subsubsection{Localization Per Basis Function}
\label{subsec: Localization Per Basis Function}
We define a space \[V^{h,H}:=\{v \in H_0^1(\Omega): [v, \phi_j^{h,H}]=0, j\in I\}\, .\] Then, by the optimality of $\psi_i^{h,H}$ and $\psi_i^{h,H,l}$ in their corresponding optimization problems, we have 
$\left<\psi_i^{h,H}, v\right>_a=0$ for any $v \in V^{h,H}$ and $\left<\psi_i^{h,H,l}, v\right>_a=0$ for any $v \in V^{h,H}\bigcap H_0^1(\rN^l(\omega_i^H))$. Thus, $\left<\psi_i^{h,H}-\psi_i^{h,H,l},v\right>_a=0$ for any $v \in V^{h,H}\bigcap H_0^1(\rN^l(\omega_i^H))$. 

Then, we define $\chi_i^{h,H}=\psi_i^{h,H}-\psi_i^{h,H,0}$ and $\chi_i^{h,H,l}=\psi_i^{h,H,l}-\psi_i^{h,H,0}$. We have $\psi_i^{h,H}-\psi_i^{h,H,l}=\chi_i^{h,H}-\chi_i^{h,H,l}$ and $\chi_i^{h,H,l} \in V^{h,H}\bigcap H_0^1(\rN^l(\omega_i^H))$.

Based on the above fact and the orthogonality, we get
\begin{equation}
\begin{aligned}
    \|\psi_i^{h,H}-\psi_i^{h,H,l}\|_{H_a^1(\Omega)}^2&=\|\chi_i^{h,H}-\chi_i^{h,H,l}\|_{H_a^1(\Omega)}^2\\
    &\leq \|\chi_i^{h,H}-v\|_{H_a^1(\Omega)}^2\, ,
\end{aligned}
\end{equation}
for any $v \in V^{h,H}\bigcap H_0^1(\rN^l(\omega_i^H))$. We take \[v=\eta \chi_i^{h,H}-\sfP^{h,H,0}(\eta\chi_i^{h,H})\, ,\] where $\eta$ is a cut-off function that equals $1$ in $\rN^{l-1}(\omega_i^H)$ and equals $0$ outside $\rN^{l}(\omega_i^H)$. Moreover, $\eta\geq 0$ and $\|\nabla \eta\|_{\infty}\lesssim 1/H$. This $v$ belongs to $V^{h,H}\bigcap H_0^1(\rN^l(\omega_i^H))$ because both $\eta \chi_i^{h,H}$ and $\sfP^{h,H,0}(\eta\chi_i^{h,H})$ belong to $H_0^1(\rN^l(\omega_i^H))$, and by definition, $[\eta \chi_i^{h,H}-\sfP^{h,H,0}(\eta\chi_i^{h,H}),\phi_j^{h,H}]=0, j\in I $. Then, it follows that
\begin{equation}
\label{eqn: proof loc per basis 1}
    \begin{aligned}
    \|\chi_i^{h,H}-v\|_{H_a^1(\Omega)}^2&=\|(1-\eta)\chi_i^{h,H}-\sfP^{h,H,0}\left(\eta\chi_i^{h,H}\right)\|_{H_a^1(\Omega)}^2\\
    &=\|(1-\eta)\chi_i^{h,H}-\sfP^{h,H,0}\left((1-\eta)\chi_i^{h,H}\right)\|_{H_a^1(\Omega)}^2\, ,
    \end{aligned}
\end{equation}
where we have used the fact $\sfP^{h,H,0}\chi_i^{h,H}=0$. To move further, we need to use the following Lemma:
\begin{lemma}
\label{lemma: P 0 stable}
The operator $\sfP^{h,H,0}$ is stable under the norm $\|\cdot\|_{H_a^1(\Omega)}$. More precisely, we have for any $w \in H_0^1(\Omega)$, it holds
\[\|\sfP^{h,H,0}w\|_{H_a^1(\Omega)}\lesssim \|w\|_{H_a^1(\Omega)}\, . \]
\end{lemma}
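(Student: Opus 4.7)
The plan is to reduce the stability claim to constructing, for each $i\in I$, a local test function $\tilde v_i\in H_0^1(\omega_i^H)$ whose moment against $\phi_i^{h,H}$ matches that of $w$, and whose total energy is controlled by $\|w\|_{H_a^1(\Omega)}^2$. The key ingredients are the variational/minimum-energy definition of $\psi_i^{h,H,0}$, the subsampled Poincar\'e inequality (Proposition 2.5 of \cite{chen2019function}), and the norm estimate $\|\psi_i^{h,H,0}\|_{H_a^1(\omega_i^H)}\lesssim H^{d/2-1}/\rho_{2,d}(H/h)$ already established in Theorem~\ref{thm: error loc solution}.

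First, since $\psi_i^{h,H,0}\in H_0^1(\omega_i^H)$ and the cubes $\{\omega_i^H\}_{i\in I}$ have pairwise disjoint interiors, we obtain the orthogonal expansion
\[
\|\sfP^{h,H,0}w\|_{H_a^1(\Omega)}^2 \;=\; \sum_{i\in I}[w,\phi_i^{h,H}]^2\,\|\psi_i^{h,H,0}\|_{H_a^1(\omega_i^H)}^2.
\]
By the variational minimality of $\psi_i^{h,H,0}$, for any admissible $\tilde v_i\in H_0^1(\omega_i^H)$ with $[\tilde v_i,\phi_i^{h,H}]=[w,\phi_i^{h,H}]$, the $i$-th summand is dominated by $\|\tilde v_i\|_{H_a^1(\omega_i^H)}^2$, so stability reduces to finding $\{\tilde v_i\}$ whose energies sum to $\lesssim\|w\|_{H_a^1(\Omega)}^2$.

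For the construction I would take a smooth cut-off $\eta_i$ equal to $1$ on a sub-cube containing $\omega_i^{h,H}$ (arrangeable under $h\le H/2$; the complementary regime $h>H/2$ follows from the norm estimate directly), supported in $\omega_i^H$, with $\|\nabla\eta_i\|_\infty\lesssim 1/H$. Setting $c_i:=[w,\phi_i^{h,H}]$ and
\[
\tilde v_i \;:=\; \eta_i(w-c_i)\;+\;c_i\,\psi_i^{h,H,0},
\]
the identity $\eta_i\equiv 1$ on $\text{supp}\,\phi_i^{h,H}$ gives $[\tilde v_i,\phi_i^{h,H}]=c_i$ as required, while the product rule together with the subsampled Poincar\'e bound $\|w-c_i\|_{L^2(\omega_i^H)}\lesssim H\rho_{2,d}(H/h)\|\nabla w\|_{L^2(\omega_i^H)}$ yields
\[
\|\eta_i(w-c_i)\|_{H_a^1(\omega_i^H)}^2 \;\lesssim\; \rho_{2,d}(H/h)^2\,\|\nabla w\|_{L^2(\omega_i^H)}^2.
\]
Summing over $i$ with the finite overlap of $\{\omega_i^H\}$ bounds the first piece by $\rho_{2,d}(H/h)^2\|\nabla w\|_{L^2(\Omega)}^2$.

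The main obstacle is closing the argument: the residual contribution $\sum_i c_i^2\|\psi_i^{h,H,0}\|_{H_a^1(\omega_i^H)}^2$ is \emph{precisely} $\|\sfP^{h,H,0}w\|_{H_a^1(\Omega)}^2$, so the bound is self-referential, and the intermediate estimate carries an unwanted $\rho_{2,d}(H/h)^2$ factor. The decisive step is to exploit the exact cancellation between this $\rho_{2,d}(H/h)^2$ loss in the Poincar\'e inequality and the $\rho_{2,d}(H/h)^{-2}$ gain in the norm estimate $\|\psi_i^{h,H,0}\|_{H_a^1(\omega_i^H)}^2\lesssim H^{d-2}/\rho_{2,d}(H/h)^2$, and then to apply a Young-type absorption with an absorption coefficient strictly less than one in order to move the $\|\sfP^{h,H,0}w\|_{H_a^1(\Omega)}^2$ term to the left-hand side. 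Making this Young-type absorption quantitative, so that the final constant depends only on $d$, $a_{\min}$, and $a_{\max}$ and not on $h$ or $H$, is the delicate technical point of the proof.
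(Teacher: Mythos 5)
Your reduction to competitor functions is sound up to the last step, but the proof does not close, and the failure is exactly at the point you flag. Because your competitor is $\tilde v_i=\eta_i(w-c_i)+c_i\psi_i^{h,H,0}$, the variational minimality of $\psi_i^{h,H,0}$ only yields
\[
\|c_i\psi_i^{h,H,0}\|_{H_a^1(\omega_i^H)}\;\le\;\|\tilde v_i\|_{H_a^1(\omega_i^H)}\;\le\;\|c_i\psi_i^{h,H,0}\|_{H_a^1(\omega_i^H)}+\|\eta_i(w-c_i)\|_{H_a^1(\omega_i^H)}\,,
\]
which is vacuous: the term you want to absorb reappears on the right with coefficient exactly $1$, so no Young-type splitting can produce an absorption constant strictly below one. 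The hoped-for cancellation between the $\rho_{2,d}(H/h)^2$ loss from the subsampled Poincar\'e inequality and the $\rho_{2,d}(H/h)^{-2}$ gain in the norm estimate cannot rescue this either, because it would require $c_i^2\lesssim \rho_{2,d}(H/h)^2H^{2-d}\|\nabla w\|_{L^2(\omega_i^H)}^2$, and $c_i=[w,\phi_i^{h,H}]$ is an average of $w$, not of $\nabla w$. Indeed, take $w$ to be a fixed plateau function with $w\equiv 1$ on $[1/4,3/4]^d$: then $c_i=1$ and $\nabla w=0$ on order $H^{-d}$ cells, while even a crude capacity bound gives $\|\psi_i^{h,H,0}\|_{H_a^1(\omega_i^H)}^2\gtrsim h^d/H^2$, so $\sum_i c_i^2\|\psi_i^{h,H,0}\|^2\gtrsim (h/H)^dH^{-2}\to\infty$ as $H\to 0$ at fixed $h/H$, whereas $\|w\|_{H_a^1(\Omega)}^2=O(1)$. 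Since $\sum_i c_i^2\|\psi_i^{h,H,0}\|^2$ is \emph{exactly} $\|\sfP^{h,H,0}w\|_{H_a^1(\Omega)}^2$, any cell-by-cell strategy that routes through $|c_i|$ and the norm of $\psi_i^{h,H,0}$ is doomed, and this example shows that the difficulty you ran into is an obstruction in the statement for general $w\in H_0^1(\Omega)$, not a removable technicality.

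For comparison, the paper's own proof is a two-line orthogonality argument of a different flavor: it splits $\|w-\sfP^{h,H,0}w\|_{H_a^1(\Omega)}^2$ over the disjoint cells, invokes on each $\omega_i^H$ the claimed $a$-orthogonality $\int_{\omega_i^H}a\nabla(w-c_i\psi_i^{h,H,0})\cdot\nabla\psi_i^{h,H,0}=0$ coming from the Euler--Lagrange characterization of $\psi_i^{h,H,0}$, and concludes $\|w-\sfP^{h,H,0}w\|_{H_a^1(\Omega)}\le\|w\|_{H_a^1(\Omega)}$ by Pythagoras, whence the lemma by the triangle inequality. You should be aware that this orthogonality is only guaranteed against perturbations in $H_0^1(\omega_i^H)$ with vanishing moment, while $w-c_i\psi_i^{h,H,0}$ does not vanish on $\partial\omega_i^H$, so a boundary term $\int_{\partial\omega_i^H}w\,a\,\partial_n\psi_i^{h,H,0}$ survives; the plateau example above is in tension with the unrestricted statement for the same reason. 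In the only place the lemma is actually used, it is applied to $w=(1-\eta)\chi_i^{h,H}$, whose moments $[w,\phi_j^{h,H}]$ are themselves small, and that is the setting in which a stability bound of this kind can legitimately be established. So rather than trying to repair the absorption, you should either prove the statement under an additional hypothesis controlling the moments $[w,\phi_j^{h,H}]$, or follow the paper's orthogonality route while carefully accounting for the admissibility of the test function.
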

\begin{proof}[Proof of Lemma \ref{lemma: P 0 stable}] 
By definition, $\psi_i^{h,H,0}$ is supported in $\omega_i^H$, and $\sfP^{h,H,0}w=\sum_{i\in I} [w,\phi_i^{h,H}]\psi_i^{h,H,0}$. Thus, we have
\begin{equation}
\label{eqn: proof lemma P 0 stable}
    \begin{aligned}
    \|w-\sfP^{h,H,0}w\|_{H_a^1(\Omega)}^2&=\sum_{i\in I} \int_{\omega_i^H} a\left|\nabla (w-[w,\phi_i^{h,H}]\psi_i^{h,H,0})\right|^2\\
    &\leq \sum_{i\in I} \int_{\omega_i^H} a\left|\nabla w\right|^2 =\|w\|_{H_a^1(\Omega)}^2\, ,
    \end{aligned}
\end{equation}
where we have used the fact that in each $\omega_i^H$, it holds \[\int_{\omega_i^H} a \nabla (w-[w,\phi_i^{h,H}]\psi_i^{h,H,0}) \cdot \nabla \psi_i^{h,H,0} =0\, , \] according to the definition of $\psi_i^{h,H,0}$. Equation \eqref{eqn: proof lemma P 0 stable} implies $\sfP^{h,H,0}$ is stable.
\end{proof}
Using Lemma \ref{lemma: P 0 stable}, we proceed as follows:
\begin{equation}
\label{eqn: proof loc per basis 2}
    \begin{aligned}
        \eqref{eqn: proof loc per basis 1}&\lesssim \|(1-\eta)\chi_i^{h,H}\|_{H_a^1(\Omega)}^2\\
        &=\int_{S_{l}\backslash S_{l-1}} a^2|(\nabla \eta)\chi_i^{h,H}|^2 + \int_{S_{l}\backslash S_{l-1}} a^2|\eta \nabla \chi_i^{h,H}| +\|\chi_i^{h,H}\|^2_{H_a^1(\Omega\backslash S_l)}\, ,
    \end{aligned}
\end{equation}
where we have used the notation $S_l=\rN^l(\omega_i^{H})$. For the first term in \eqref{eqn: proof loc per basis 2}, we have
\begin{equation}
\label{eqn: proof loc per basis 3}
    \begin{aligned}
    \int_{S_{l}\backslash S_{l-1}} a^2|(\nabla \eta)\chi^{h,H}_i|^2&=\sum_{\omega_j^H \subset S_{l}\backslash S_{l-1}}  \int_{\omega_j^H} a^2|(\nabla \eta)\chi^{h,H}_i|^2\\
    &\lesssim \sum_{\omega_j^H \subset S_{l}\backslash S_{l-1}} \frac{1}{H^2}\cdot H^2\left(\rho_{2,d}(\frac{H}{h})\right)^2\|\chi^{h,H}_i\|_{H_a^1(\omega_j^H)}^2\\
    &=\left(\rho_{2,d}(\frac{H}{h})\right)^2\|\chi^{h,H}_i\|_{H_a^1(S_{l}\backslash S_{l-1})}^2\, .
    \end{aligned}
\end{equation}
In the above inequality, we have used the gradient bound of $\eta$, the subsampled Poincare inequality (due to the property $[\chi_i^{h,H},\phi_j^{h,H}]=0$). Therefore, we obtain
\begin{equation}
\begin{aligned}
    \eqref{eqn: proof loc per basis 2}&\lesssim (1+\left(\rho_{2,d}(\frac{H}{h})\right)^2)\|\chi^{h,H}_i\|_{H_a^1(S_{l}\backslash S_{l-1})}^2+\|\chi_i^{h,H}\|^2_{H_a^1(\Omega\backslash S_l)}\\
    &\lesssim (1+\left(\rho_{2,d}(\frac{H}{h})\right)^2) \|\chi_i^{h,H}\|^2_{H_a^1(\Omega\backslash S_{l-1})}\, .
\end{aligned}
\end{equation}
Using the fact $\|\chi_i^{h,H}\|^2_{H_a^1(\Omega\backslash S_{l-1})}=\|\psi_i^{h,H}\|^2_{H_a^1(\Omega\backslash S_{l-1})}$, the exponential decay property and norm estimate of $\psi_i^{h,H}$, we finally obtain
\[ \|\psi_i^{h,H}-\psi_i^{h,H,l}\|_{H_a^1(\Omega)} \lesssim H^{d/2-1}\left(\beta(h,H)\right)^{l/2}\left(1+\frac{1}{\rho_{2,d}(\frac{H}{h})}\right)\, .\]
On the other hand, we have
\[ \|\psi_i^{h,H}-\psi_i^{h,H,l}\|_{H_a^1(\Omega)} \leq\|\psi_i^{h,H}\|_{H_a^1(\Omega)}+\|\psi_i^{h,H,l}\|_{H_a^1(\Omega)}\lesssim H^{d/2-1}\frac{1}{\rho_{2,d}(\frac{H}{h})}\, ,\]
due to the norm estimate established before. Thus, finally we obtain
\[ \|\psi_i^{h,H}-\psi_i^{h,H,l}\|_{H_a^1(\Omega)} \lesssim H^{d/2-1} \cdot \min\left\{\left(\beta(h,H)\right)^{l/2}\left(1+\frac{1}{\rho_{2,d}(\frac{H}{h})}\right), \frac{1}{\rho_{2,d}(\frac{H}{h})}\right\}\, .\]
Note that $1 \leq 1+\frac{1}{\rho_{2,d}(\frac{H}{h})}\leq 1 + \frac{1}{\rho_{2,d}(1)}$, we could further simplify the the upper bound by
\[ \|\psi_i^{h,H}-\psi_i^{h,H,l}\|_{H_a^1(\Omega)} \lesssim H^{d/2-1} \cdot \min\left\{\left(\beta(h,H)\right)^{l/2}, \frac{1}{\rho_{2,d}(\frac{H}{h})}\right\}\, .\]

\subsubsection{Overall Localization Error}
\label{subsec: Overall Localization Error}
Let $w=\sfP^{h,H}u-\sfP^{h,H,l}u$, then
\begin{equation}
\label{eqn: proof of Overall Localization Error 1}
    \begin{aligned}
    \|w\|_{H_a^1(\Omega)}^2&=\sum_{i\in I}[u,\phi_i^{h,H}]\left<w,\psi_i^{h,H}-\psi_i^{h,H,l}\right>_a\, .
    \end{aligned}
\end{equation}
For each $i$, to deal with the term $\left<w,\psi_i^{h,H}-\psi_i^{h,H,l}\right>_a$, we introduce a cut-off function $\eta$ that equals $0$ in $\rN^{l}(\omega_i^H)$ and equals $1$ in $\Omega \backslash \rN^{l+1}(\omega_i^H)$; moreover, $\eta \geq 0$ and $\|\nabla \eta\|_{\infty}\lesssim 1/H$. We define 
\[v=\sum_{\omega_j^H \subset \Omega \backslash \rN^{l}(\omega_i^H)} [\eta w,\phi_j^{h,H}]\psi_j^{h,H,0} \in H_0^1(\Omega \backslash \rN^{l}(\omega_i^H))\, . \]
Then $\eta w-v\in V^{h,H}\bigcap H_0^1(\Omega \backslash \rN^{l}(\omega_i^H))$. Thus, we have $\left<\eta w-v,\psi_i^{h,H}-\psi_i^{h,H,l}\right>=0$ because $\eta w-v$ has a different support with that of $\psi_i^{h,H,l}$, and $\left<\psi_i^{h,H}, v\right>_a=0$ for any $v \in V^{h,H}$; see the first paragraph in Subsection \ref{subsec: Localization Per Basis Function}, Therefore, we get
\begin{equation}
\begin{aligned}
    &\left<w,\psi_i^{h,H}-\psi_i^{h,H,l}\right>_a\\
    =&\left<w-\eta w+v,\psi_i^{h,H}-\psi_i^{h,H,l}\right>_a\\
    \leq &\left(\|(1-\eta)w\|_{H_a^1(\rN^{l}(\omega_i^H))}+ \|v\|_{H_a^1(\rN^{l+1}(\omega_i^H)\backslash \rN^{l}(\omega_i^H))}\right)\|\psi_i^{h,H}-\psi_i^{h,H,l}\|_{H_a^1(\Omega)}\, ,
\end{aligned}
\end{equation}
where we have used the fact that $v$ is supported in $\rN^{l+1}(\omega_i^H)\backslash \rN^{l}(\omega_i^H)$. Then, by construction of $v$, we have $\|v\|_{H_a^1(\rN^{l+1}(\omega_i^H)\backslash \rN^{l}(\omega_i^H))}\lesssim \|\eta w\|_{H_a^1(\rN^{l+1}(\omega_i^H)\backslash \rN^{l}(\omega_i^H))}$; the proof of this property is similar to that of Lemma \ref{lemma: P 0 stable}. Now, by using the fact $[w,\phi_j^{h,H}]=0$ and the subsampled Poincare inequality, we obtain
\[\|(1-\eta)w\|_{H_a^1(\rN^{l}(\omega_i^H))}+ \|\eta w\|_{H_a^1(\rN^{l+1}(\omega_i^H)\backslash \rN^{l}(\omega_i^H))}\lesssim \rho_{2,d}(\frac{H}{h}) \|w\|_{H_a^1(\rN^{l+1}(\omega_i^H))}\, .\]
Therefore, $\left<w,\psi_i^{h,H}-\psi_i^{h,H,l}\right>_a \lesssim \rho_{2,d}(\frac{H}{h}) \|w\|_{H_a^1(\rN^{l+1}(\omega_i^H))}\|\psi_i^{h,H}-\psi_i^{h,H,l}\|_{H_a^1(\Omega)}$. Then combining this estimate with
\eqref{eqn: proof of Overall Localization Error 1}, we arrive at
\begin{equation}
\begin{aligned}
    \|w\|_{H_a^1(\Omega)}^2&\lesssim \rho_{2,d}(\frac{H}{h})\sum_{i\in I}[u,\phi_i^{h,H}] \|w\|_{H_a^1(\rN^{l+1}(\omega_i^H))}\|\psi_i^{h,H}-\psi_i^{h,H,l}\|_{H_a^1(\Omega)}\\
    &\lesssim \rho_{2,d}(\frac{H}{h})\|u\|_{L^{\infty(\Omega})} l^{d/2}\|w\|_{H_a^1(\Omega)}\left(\sum_{i\in I} \|\psi_i^{h,H}-\psi_i^{h,H,l}\|_{H_a^1(\Omega)}^2 \right)^{1/2}\, ,
\end{aligned}   
\end{equation}
where the last step is by the Cauchy-Schwarz inequality. Combining the above estimate with the result in the last subsection (notice that the cardinality of $I$ is $1/H^d$), we get
\begin{equation}
    \|w\|_{H_a^1(\Omega)}\lesssim \min\left\{\left(\beta(h,H)\right)^{l/2}\rho_{2,d}(\frac{H}{h}),1\right\}\cdot \frac{l^{d/2}}{H}\|u\|_{L^{\infty}(\Omega)}\, .
\end{equation}
On the other hand, we can also bound
\begin{equation}
\begin{aligned}
    \|w\|_{H_a^1(\Omega)}&\leq \sum_{i\in I}|[u,\phi_i^{h,H}]|\cdot \|\psi_i^{h,H}-\psi_i^{h,H,l}\|_{H_a^1(\Omega)}\\
    &\lesssim \|u\|_{L^{\infty}(\Omega)}H^{-d}\cdot H^{d/2-1} \cdot \min\left\{\left(\beta(h,H)\right)^{l/2}, \frac{1}{\rho_{2,d}(\frac{H}{h})}\right\}\\
    &\lesssim \min\left\{\left(\beta(h,H)\right)^{l/2}\rho_{2,d}(\frac{H}{h}),1\right\} \cdot \frac{1}{H^{d/2+1}\rho_{2,d}(\frac{H}{h})}\|u\|_{L^{\infty}(\Omega)}\, .
\end{aligned}
\end{equation}
Therefore, we can write
\begin{equation}
    \|w\|_{H_a^1(\Omega)}\lesssim \min\left\{\left(\beta(h,H)\right)^{l/2}\rho_{2,d}(\frac{H}{h}),1\right\}\cdot \min\left\{\frac{l^{d/2}}{H},\frac{1}{H^{d/2+1}\rho_{2,d}(\frac{H}{h})}\right\}\|u\|_{L^{\infty}(\Omega)}\, .
\end{equation}
\subsubsection{Overall Recovery Error} 
\label{subsec: Overall Recovery Error}
When $d\leq 3$, we have $\|u\|_{L^\infty(\Omega)}\lesssim \|\cL u\|_{L^2(\Omega)}$; for details see Theorems 8.22 and 8.29 in \cite{gilbarg2015elliptic}. Combining the estimates in \eqref{eqn: err loc decompose} and \eqref{eqn: Overall localization error} leads to the estimate of the energy recovery error. For the $L^2$ recovery error, similar to \eqref{eqn: err loc decompose}, we have
\begin{equation}
    e_0^{h,H,l}(a,u)\lesssim (H\rho_{2,d}(\frac{H}{h}))^2\|\cL u\|_{L^2(\Omega)}+\|\sfP^{h,H}u-\sfP^{h,H,l}u\|_{L^2(\Omega)}\, .
\end{equation}
The second term $\|\sfP^{h,H}u-\sfP^{h,H,l}u\|_{L^2(\Omega)}$ is the $L^2$ localization error. We can simply bound it by: 
\begin{equation}
    \|\sfP^{h,H}u-\sfP^{h,H,l}u\|_{L^2(\Omega)}\leq \|\sfP^{h,H}u-\sfP^{h,H,l}u\|_{H_a^1(\Omega)}\, .
\end{equation}
On the other hand, notice that $[\sfP^{h,H}u-\sfP^{h,H,l}u,\phi_i^{h,H}]=0$ for any $i\in I$, we can use the subsampled Poincar\'e inequality so that
\begin{equation}
\begin{aligned}
    \|\sfP^{h,H}u-\sfP^{h,H,l}u\|^2_{L^2(\Omega)}&=\sum_{i\in I}\int_{\omega_i^H} |\sfP^{h,H}u-\sfP^{h,H,l}u|^2\\
    &\lesssim (H\rho_{2,d}(\frac{H}{h}))^2 \int_{\omega_i^H} a|\nabla (\sfP^{h,H}u-\sfP^{h,H,l}u)|^2 \\
    &= (H\rho_{2,d}(\frac{H}{h}))^2\|\sfP^{h,H}u-\sfP^{h,H,l}u\|^2_{H_a^1(\Omega)}\, .
\end{aligned}
\end{equation}
Therefore, we obtain
\begin{equation}
    \|\sfP^{h,H}u-\sfP^{h,H,l}u\|_{L^2(\Omega)}\leq \min\left\{1, H\rho_{2,d}(\frac{H}{h})\right\}\|\sfP^{h,H}u-\sfP^{h,H,l}u\|_{H_a^1(\Omega)}\, .
\end{equation}
Using the estimate of the energy error, we arrive at the final estimate.
\subsubsection{Overall Galerkin Error}
\label{subsec: overall Galerkin error}
The estimate for the energy Galerkin error is straightforward due to the Galerkin orthogonality. The $L^2$ error is estimated using the standard Aubin-Nitsche trick in finite element theory, which leads to square of the energy error. This completes the proof.
\subsection{Proof of Theorem \ref{thm: weighted estimates}}
\label{subsec: Proof of Theorem weighted estimates}
We start with the first case, i.e., $\|u\|_{H_a^1(\Omega)}<\infty$. By definition,
\begin{equation*}
    e_0^{h,H,\infty}(a,u)=\|u-\sfP^{h,H}u\|_{L^2(\Omega)}\, .
\end{equation*}
We have the relation $[u-\sfP^{h,H}u,\phi^{h,H}_j]=0$ for any $j\in I$. Thus, using the weighted Poincar\'e inequality in \cite{chen2019function} (Theorem 4.3 and Example 1), we can estimate the error as follows:
\begin{equation}
    \begin{aligned}
        \|u-\sfP^{h,H}u\|_{L^2(\Omega)}^2&=\sum_{i\in I}\|u-\sfP^{h,H}u\|_{L^2(\omega_i^H)}^2\\
        & \lesssim C(\gamma)^2H^2\sum_{i\in I}\|u-\sfP^{h,H}u\|_{H_a^1(\omega_i^H)}^2\\
        & \lesssim C(\gamma)^2H^2\|u\|_{H_a^1(\Omega)}^2\, ,
    \end{aligned}
\end{equation}
where in the last step, we have used the fact that $\|u-\sfP^{h,H}u\|_{H_a^1(\Omega)}\leq \|u\|_{H_a^1(\Omega)}$ due to the energy orthogonality. The first case is proved.

For the second case, by energy orthogonality of the recovery, we get
\begin{equation}
    \begin{aligned}
        e_1^{h,H,\infty}(a,u)\leq \|u-v\|_{H_a^1(\Omega)}\, ,
    \end{aligned}
\end{equation}
for any $v \in \text{span}~\{\psi_i^{h,H}\}_{i\in I}$. We can write $v=\cL^{-1}(\sum_{i\in I}c_i\phi_i^{h,H})$ for some $c_i$. Then, it holds that
\begin{equation}
\label{eqn: proof weighted estimate}
    \begin{aligned}
        \|u-v\|_{H_a^1(\Omega)}^2&=[u-v,\cL(u-v)]\\
        &=[u-v,f-\sum_{i\in I}c_i\phi_i^{h,H}]\\
        &=\sum_{i\in I} \int_{\omega_i^H}(u-v)(f-c_i\phi_i^{h,H})\, .
    \end{aligned}
\end{equation}
We choose $c_i=\int_{\omega_i^H} f$, so that
\begin{equation}
    \begin{aligned}
       \sum_{i\in I} \int_{\omega_i^H}(u-v)(f-c_i\phi_i^{h,H})&=\sum_{i\in I} \int_{\omega_i^H} \left(u-v-\int_{\omega_i^H} (u-v)\phi_i^{h,H}\right)f\\
       &\lesssim C(\gamma)\sum_{i\in I}H\|u-v\|_{H_a^1(\omega_i^H)}\|f\|_{L^2(\omega_i^{H})}\\
       &\leq C(\gamma)H \|u-v\|_{H_a^1(\Omega)}\|f\|_{L^2(\Omega)}\, ,
    \end{aligned}
\end{equation}
where in the second inequality, we use the Cauchy-Schwarz inequality and the weighted Poincar\'e inequality (Theorem 4.3 and Example 1 in \cite{chen2019function}). Thus, finally we get $\|u-v\|_{H_a^1(\Omega)} \lesssim C(\gamma)H\|f\|_{L^2(\Omega)}$, which implies the desired energy error estimate. The $L^2$ error estimate is obtained by using the standard Aubin-Nitsche trick in the finite element theory.
\section{Concluding Remarks} We summarize, discuss, and conclude this paper in this section.
\label{sec Concluding Remarks}
\subsection{Summary} In this paper, we performed a detailed study of a specific approach that connects the problem of numerical upscaling and function approximation, in the context that the target function is a solution to some multiscale elliptic PDEs with rough coefficients. Our main focus is on a subsampled lengthscale that appears in the coarse data of both problems. We investigated, both numerically and theoretically, the effect of $h$ on the recovery errors (for function approximation) and Galerkin errors (for numerical upscaling), given no computational constraints (ideal solution) or limited computational budgets (localized solution with a finite $l$), and given different regularity assumptions on the target function ($a(x)\in L^{\infty}(\Omega)$ or a singular $a(x)$). Our results imply that
\begin{itemize}
    \item There is a trade-off between approximation errors (of ideal solutions) and localization errors (due to finite $l$) regarding the subsampled lengthscale $h$, in addition to the oversampling parameter $l$.
    \item Due to the finite $l$ caused by our limited computational budget, the Galerkin solution and recovery solution are different in general. The former behaves better in the energy accuracy, while the latter stands out in the $L^2$ accuracy.
    \item When the target function is ``nearly flat" around the data locations, the subsampled data with a very small $h$ can still contain much coarse scale information. Thus, we would recommend to take our measurements there as a first choice.
\end{itemize}
The more quantitative descriptions of these main results are established by our numerical experiments and analytic studies based on tools such as the finite element theory, the subsampled Poincar\'e inequality, and weighted inequalities.
\subsection{Discussions} There could be multiple future directions: \begin{itemize}
    \item A better understanding of the trade-off regarding $h$ and $l$: how to choose optimal $l$ and $h$ adaptively with respect to $u$ or $f$. Our current results do not address this question fully.
    \item Other localization strategies: our localization in Subsection \ref{subsec: Basis Functions and Localization} follows from that in \cite{malqvist_localization_2014, owhadi_multigrid_2017}, and there are other possibilities, for example, the one in \cite{henning2013oversampling} or \cite{kornhuber2018analysis}, which leads to error estimates that does not blow up as $H\to 0$. It is of interest to understand how the subsampled lengthscale influences the accuracy in that context.
    \item Other measurement functions: as we mentioned earlier in Subsection \ref{subsec: Subsampled Lengthscales}, the choice of $\phi_i^{h,H}$ to be indicator functions in subsampled cubes is only for simplicity of analysis. Thus, results in this paper could be generalized to other types of subsampled measurement functions, for example, subsampled finite element tent functions.
    \item Generalization to high order models: the approach in Subsection \ref{subsec: a common approach} applies to a general operator $\cL$ that can be high order elliptic operators. This also connects to our discussion in Subsection \ref{subsec Small Limit Regime of Subsampled Lengthscales} regarding a high order model to avoid the degeneracy issues. It is of interest to study the effect of $h,l$ and also the order of the operator $\cL$ simultaneously on the recovery and Galerkin errors. 
    \item Coupling of two problems: we have considered a common approach that connects two class of problems. A natural question is about a hybrid model: suppose we have the domain $\Omega$ split into two smaller domains $\Omega_1$ and $\Omega_2$. In $\Omega_1$, we have a multiscale PDE $\cL u=f$ with known $f$, and in $\Omega_2$ we have some subsampled data $[u,\phi_i], i \in I$. How shall we take the advantages of the PDE model in $\Omega_1$ and the measured data in $\Omega_2$ to recover an accurate $u$? This can be a very fundamental problem in combining physics and data science.
\end{itemize}
\subsection{Conclusion} Overall, we have explored the connection between numerical upscaling for multiscale PDEs and scattered data approximation for heterogeneous functions, focusing on the roles of a subsampled lengthscale $h$ and the localization parameter $l$. We believe it sheds light on the interplay of the lengthscale of coarse data, the computational costs, the regularity of the target function, and the accuracy of approximations and numerical simulations.

\bibliographystyle{siamplain}
\bibliography{ref}
\end{document}